\documentclass[10pt, reqno]{amsart}

\usepackage{amscd,amsmath}
\usepackage{mathrsfs}

\usepackage{amsfonts}
\usepackage{amssymb}
\usepackage{enumerate}
\usepackage{setspace}
\usepackage{color}
\usepackage{esint}
\usepackage{rotating,dsfont,stackengine}
\usepackage{mathabx}
\usepackage{tabu}
\usepackage{tikz} 
\usepackage{setspace}
\usepackage[margin=1in]{geometry}

\usepackage{extpfeil}
\usepackage{ifthen,latexsym,float,colortbl}

\usepackage{scalerel}

\usepackage{caption}
\usepackage{sidecap}

\usepackage[english]{babel}
\usepackage{graphicx}

\definecolor{luh-dark-blue}{rgb}{0.0, 0.313, 0.608}

\usepackage[colorlinks=true, citecolor=blue]{hyperref}

\newtheorem{theorem}{Theorem}[section]
\newtheorem{lemma}{Lemma}[section]

\newtheorem{definition}{Definition}[section]

\newtheorem{remark}{Remark}[section]
\newtheorem{proposition}{Proposition}[section]

\newcommand{\eqn}{\begin{eqnarray}}
\newcommand{\een}{\end{eqnarray}}

\numberwithin{equation}{section}

\DeclareMathOperator{\dv}{div}

\newcommand{\nc}{\newcommand}

\newcommand{\R}{\mathbb R}

\newcommand{\N}{\mathbb N}

\nc{\mc}{\mathcal C}

\newcommand{\bq}{\begin{equation}}
\newcommand{\eq}{\end{equation}}

\newcommand{\lt}{\left}
\newcommand{\rt}{\right}

\newcommand{\pa}{\partial}

\makeatletter
\def\moverlay{\mathpalette\mov@rlay}
\def\mov@rlay#1#2{\leavevmode\vtop{%
   \baselineskip\z@skip \lineskiplimit-\maxdimen
   \ialign{\hfil$\m@th#1##$\hfil\cr#2\crcr}}}
\newcommand{\charfusion}[3][\mathord]{
    #1{\ifx#1\mathop\vphantom{#2}\fi
        \mathpalette\mov@rlay{#2\cr#3}
      }
    \ifx#1\mathop\expandafter\displaylimits\fi}
\makeatother

 \makeatletter
 \newcommand{\norm}{\@ifstar{\@normb}{\@normi}}
 \newcommand{\@normb}[2]{\left\Vert{#1}\right\Vert_{#2}}
 \newcommand{\@normi}[2]{\Vert{#1}\Vert_{#2}}
 \makeatother

\stackMath

\newcommand\wh[1]{%
\savestack{\tmpbox}{\stretchto{%
  \scaleto{%
    \scalerel*[\widthof{\ensuremath{#1}}]{\kern-.6pt\bigwedge\kern-.6pt}%
    {\rule[-\textheight/2]{1ex}{\textheight}}
  }{\textheight}%
}{0.5ex}}%
\stackon[1pt]{#1}{\tmpbox}%
}

\begin{document}

\title[Eventual regularity and asymptotic behavior of weak solutions for Hall-MHD]{Eventual regularity and asymptotic behavior of Leray-Hopf weak solutions for the Hall-MHD system}

\author[Jung]{Jinwook Jung}
\address[Jinwook Jung]{\newline Department of Mathematics and Research Institute for Natural Sciences \newline
Hanyang University, 222 Wangsimni-ro, Seongdong-gu, Seoul 04763, Republic of Korea}
\email{jinwookjung@hanyang.ac.kr}

\author[Shin]{Jaeyong Shin}
\address[Jaeyong Shin]{\newline Department of Mathematics \newline
Yonsei University, Seoul, Republic of Korea}
\email{sinjaey@yonsei.ac.kr}

\date{\today}
\keywords{Hall-MHD system, Leray-Hopf weak solution, Eventual regularity, Asymptotic behavior, Decay rates}
\subjclass[2020]{35Q85, 76W05.}

\begin{abstract}
In this paper, we study the incompressible, viscous and resistive Hall-magnetohydrodynamic (Hall-MHD) system. We first prove that every two-dimensional Leray-Hopf weak solution becomes smooth after a finite time. In three dimensions, where eventual smoothness for arbitrary Leray-Hopf weak solutions is not known, we construct Leray-Hopf weak solutions for which the magneto-vorticity field $B+\nabla\times u$ eventually gains additional regularity. Finally, under suitable low-frequency pseudomeasure assumptions on initial data, we establish decoupled algebraic decay rates for the velocity and magnetic fields by combining a generalized Fourier splitting method with the eventual smoothness in two dimensions and strong regularity in three dimensions.
\end{abstract}

\maketitle

\vspace{-5ex}

\section{Introduction}\label{sec:intro}
In this paper, we study the incompressible, viscous, and resistive Hall-magnetohydrodynamics (Hall-MHD) system. Specifically, let $u$ be the velocity field of an electrically conducting fluid, $B$ be the magnetic field, and $p$ be the pressure. Then our system reads as follows:
\begin{subequations}\label{HMHD}
\begin{align}
& \pa_t u  +(u\cdot \nabla) u +\nabla p-\nu\Delta u=(\nabla \times B)\times B, \label{HMHD u}\\
&\pa_t B  -\nabla \times (u\times B) -\eta\Delta B +\nabla \times ((\nabla \times B)\times B)=0, \label{HMHD B} \\
& \dv u=0, \quad \dv  B=0,
\end{align}
\end{subequations}
where $u: [0,\infty)\times \R^d\rightarrow\R^3$ is the velocity field of an electrically conducting fluid, $B: [0,\infty)\times \R^d\rightarrow\R^3$ is the magnetic field, $p:[0,\infty)\times\R^d\rightarrow \R$ is the pressure, and $\nu,\eta>0$ are the coefficients of the fluid viscosity and the magnetic resistivity, respectively. We consider the Cauchy problem of \eqref{HMHD} subject to initial data:
\[
(u(0,x), B(0,x)) = (u_0(x), B_0(x)), \quad x \in \R^d.
\] 

The highest order nonlinear term $\nabla\times ((\nabla\times B)\times B)$ in \eqref{HMHD} is called the Hall current term, which was introduced by Lighthill \cite{Lighthill} to describe the effect of magnetic fields on electric currents. This term is widely recognized as essential for describing many phenomena in plasma physics \cite{Balbus, Forbes, Homann, Mininni, Shalybkov, Shay, Wardle}. Due to its physical significance and mathematical challenges, the Hall-MHD system has been the subject of extensive research in recent decades (see e.g. \cite{Acheritogaray, BJS, BK, BKS, BKS2, Chae Degond Liu, Chae Lee, Chae Schonbek, DL19, DT22, Tan}
).

As in the incompressible Navier-Stokes and classical MHD equations (\eqref{HMHD} without the Hall current term), the study of weak solutions and their long-time behavior is of fundamental importance. In this paper, we focus on \textit{Leray-Hopf weak solutions} of \eqref{HMHD}. 

\begin{definition}\upshape 
Let $(u_0,B_0)\in L^2_\sigma(\R^d):=\{ f \in L^2(\R^d) \ : \ \nabla \cdot f = 0  \}$. We call $(u,B)$ a global Leray-Hopf weak solution of \eqref{HMHD} if 
	\begin{enumerate}
	\item $u(t)$ and $B(t)$ are weakly divergence-free for almost every $t>0$, and $(u,B)$ solves the first two equations of \eqref{HMHD} in the sense of distributions with initial data $(u_0,B_0)$.
	\item $(u,B)$ belongs to $L^\infty([0,\infty);L^2(\R^d))\cap L^2(0,\infty;\dot{H}^1(\R^d))$;
	\item the (strong) energy inequality holds
\begin{equation} \label{energy ineq}
\|u(t)\|_{L^2}^2+\|B(t)\|_{L^2}^2+2\nu\int^t_s\|\nabla u(\tau)\|_{L^2}^2\,d\tau+2\eta\int^t_s\|\nabla B(\tau)\|_{L^2}^2\,d\tau\leq \|u(s)\|_{L^2}^2+\|B(s)\|_{L^2}^2,
\end{equation}
for almost all $s>0$ with $s=0$ and all $t\geq s$.
	\end{enumerate}
\end{definition}

The precise weak formulation is recalled in Appendix \ref{sec:app.C}. We also note that especially when $s=0$, \eqref{energy ineq} reduces to 
\begin{equation}\label{weak_en_ineq}
\|u(t)\|_{L^2}^2+\|B(t)\|_{L^2}^2+2\nu\int^t_0\|\nabla u(\tau)\|_{L^2}^2\,d\tau+2\eta\int^t_0\|\nabla B(\tau)\|_{L^2}^2\,d\tau\leq \|u_0\|_{L^2}^2+\|B_0\|_{L^2}^2=:\mathcal{E}_0.
\end{equation}

The notion of Leray-Hopf weak solutions for \eqref{HMHD} is directly motivated by the celebrated construction for the incompressible Navier-Stokes equations by Leray \cite{Leray} and Hopf \cite{Hopf}. As in the incompressible Navier-Stokes equations, the existence of global weak solutions of \eqref{HMHD} is guaranteed \cite{Chae Degond Liu} for $d=3$, while the questions of uniqueness and regularity remain largely open. These issues become more intricate in the presence of the Hall current term, which introduces higher-order nonlinear interactions. In three dimensions, however, the Hall current term obstructs a direct extension of the classical eventual regularity argument known for the Navier-Stokes and classical MHD equations.

Then we ask the same questions for $d=2$. For the Navier-Stokes equations, one sees that any Leray-Hopf weak solution is unique \cite{La59} and becomes instantly smooth, while the uniqueness and eventual smoothness remains open for the Hall-MHD system. Note that the Hall-MHD system for $d=2$ or its 2$\frac12$D representation is written as follows (see \cite{Chae Lee}):

\begin{subequations}\label{2.5HMHD}
\begin{align}
& \pa_t u  +u\cdot \widetilde{\nabla} u +\nabla p-\nu\Delta u=(\widetilde{\nabla} \times B)\times B,\label{2.5Hall1} \\
&\pa_t B  -\widetilde{\nabla} \times (u\times B) -\eta\Delta B +\widetilde{\nabla} \times ((\widetilde{\nabla} \times B)\times B)=0, \label{2.5Hall2} \\
& \widetilde{\nabla}\cdot u=0, \quad \widetilde{\nabla}\cdot B=0,
\end{align}
\end{subequations}

where  $u = (u_1, u_2, u_3): \R^2 \to \R^3$, $B=(B_1, B_2, B_3): \R^2 \to \R^3$ and $\widetilde\nabla:= (\pa_1, \pa_2, 0)$, respectively. For this model, the local existence of strong solutions with large data, and the global existence with small initial data are shown \cite{BKS, DT22, Tan}.

The main objective of this paper is to investigate the eventual regularity and long-time asymptotic behavior of such global Leray-Hopf weak solutions. Our main results are threefold and can be summarized as follows:  
\begin{enumerate}
\item
Eventual smoothness in two dimensions,  
\item
Eventual regularity of $(B+\nabla\times u)$ in three dimensions,
\item
Decay rates via a generalized Fourier splitting method.
\end{enumerate}

\vspace{2ex}
\subsection{Eventual smoothness in two dimensions}
Our first result concerns the eventual smoothness of Leray-Hopf weak solutions of \eqref{HMHD} for $d=2$ which refers to \eqref{2.5HMHD}. 

\begin{theorem}\label{thm:2D-eventual-regularity}\upshape
Let $(u,B)$ be a global Leray-Hopf weak solution of \eqref{HMHD} for $d=2$ corresponding to $(u_0,B_0)\in L^2_\sigma(\R^2)$. Then, there exists a time $T_0=T_0(\|u_0\|_{L^2},\|B_0\|_{L^2},\nu,\eta)>0$ such that the solution $(u,B)$ becomes smooth after $t=T_0$, \textit{i.e.}, $(u,B)\in C([T_0,\infty);H^1(\R^2))\cap C((T_0,\infty);C^{\infty}(\mathbb{R}^2))$.
\end{theorem}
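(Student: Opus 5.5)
The plan follows the classical Leray argument for eventual regularity: combine a small-data global well-posedness result in $H^1$ (or a critical space) for the 2$\frac12$D Hall-MHD system \eqref{2.5HMHD} with the energy inequality \eqref{energy ineq}, and then use weak-strong uniqueness to identify the Leray-Hopf solution with the strong one after a finite time. In two dimensions the relevant quantities are $\|(u,B)\|_{L^2}$ and $\|\nabla(u,B)\|_{L^2}$. The scaling of the Hall term, $\nabla\times((\nabla\times B)\times B)$, makes $\dot H^1$ critical for $B$ in 2D; the classical MHD part makes $L^2$ critical for $u$. The smallness condition should therefore be of the form
\[
\|\nabla u(t_0)\|_{L^2}^2+\|\nabla B(t_0)\|_{L^2}^2 \le \epsilon_0,
\]
with $\epsilon_0$ depending on $\nu,\eta$ and possibly on $\mathcal E_0$, together with an $H^1$ bound.

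The steps, in order, are as follows.

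\textbf{Step 1: small-data global existence.} I prove (or invoke from \cite{BKS, DT22, Tan}) that for data $(u_{0},B_{0})\in H^1$ with $\|\nabla u_0\|_{L^2}+\|\nabla B_0\|_{L^2}\le\epsilon_0$ there is a unique global solution in $C([t_0,\infty);H^1)\cap L^2_{loc}(H^2)$. The $H^1$ energy estimate tests \eqref{2.5Hall1} with $-\Delta u$ and \eqref{2.5Hall2} with $-\Delta B$. The Hall term becomes $\int ((\nabla\times B)\times B)\cdot\nabla\times\Delta B$. After integration by parts the worst part is bounded by $\|\nabla B\|_{L^4}^2\|\nabla^2B\|_{L^2}\lesssim\|\nabla B\|_{L^2}\|\nabla^2B\|_{L^2}^2$ (Ladyzhenskaya), and this is absorbed by $\eta\|\nabla^2B\|_{L^2}^2$ when $\|\nabla B\|_{L^2}$ is small. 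The cancellation $\int (\nabla\times B\times B)\cdot\nabla\times B=0$ removes the top-order term. The MHD coupling terms are handled similarly, via $\|\nabla u\|_{L^2}\|\nabla^2\cdot\|_{L^2}^2$-type bounds. A continuity argument keeps $\|\nabla(u,B)\|_{L^2}$ small for all time, and parabolic bootstrapping gives $C^\infty$ for $t>t_0$.

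\textbf{Step 2: choosing the restart time.} From \eqref{weak_en_ineq}, $\int_0^\infty(\nu\|\nabla u\|^2_{L^2}+\eta\|\nabla B\|^2_{L^2})\,dt\le\mathcal E_0/2$. Hence the set of times in $(0,T_0)$ where $\|\nabla(u,B)\|_{L^2}^2>\epsilon_0$ has measure at most $\mathcal E_0/(2\min(\nu,\eta)\epsilon_0)$. Taking $T_0$ larger than this, I pick $t_0<T_0$ outside this set. I also take $t_0$ among the almost-every times for which the strong energy inequality \eqref{energy ineq} holds starting at $s=t_0$ and $(u,B)(t_0)\in H^1$. This gives $T_0=T_0(\|u_0\|,\|B_0\|,\nu,\eta)$.

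\textbf{Step 3: weak-strong uniqueness.} Let $(v,H)$ be the strong solution from Step 1 started at $t_0$. I show $(u,B)=(v,H)$ on $[t_0,\infty)$ by the standard argument: expand $\|u-v\|^2+\|B-H\|^2$ using the energy inequality for $(u,B)$ from $t_0$, the energy equality for $(v,H)$, and the weak formulation tested with $(v,H)$. This last step needs $(v,H)$ to be regular enough to be a test function, which holds since it lies in $L^2H^2$ and, after a short time, is smoother; a mollification/density argument is needed. The error terms include the Hall difference $\int ((\nabla\times w)\times B)\cdot\nabla\times(\text{stuff})$ with $w=B-H$. The dangerous term has the form $\int((\nabla\times w)\times w)\cdot \nabla\times H$ or $\int ((\nabla\times H)\times w)\cdot\nabla\times w$. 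In 2D it is bounded by $\|\nabla H\|_{L^\infty}\|w\|_{L^2}\|\nabla w\|_{L^2}$ or by $\|\nabla^2 H\|_{L^2}\|w\|_{L^4}\|\nabla w\|_{L^2}\lesssim \|\nabla^2H\|_{L^2}\|w\|^{1/2}_{L^2}\|\nabla w\|^{3/2}_{L^2}$. Young then gives $\tfrac\eta2\|\nabla w\|^2+C\|\nabla^2H\|_{L^2}^4\|w\|^2$. So Gronwall closes provided $\nabla^2 H\in L^4_{loc}L^2$. I would get this from an $H^2$ estimate on the strong solution, valid for $t>t_0$ via parabolic smoothing, with a time weight near $t_0$, or by shifting $t_0$ slightly.

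I expect Step 3 to be the main obstacle: both the integrability $\nabla^2H\in L^4L^2$ near $t_0$ and justifying the use of the strong solution as a test function in the Hall term. If the time-weighted estimate fails near $t_0$, I will instead use $\|w\|_{L^4}$ together with $\nabla\times H\in L^2L^\infty$ from the $L^2H^2$ bound, via a Brezis–Gallouet-type logarithmic inequality and an Osgood argument. Continuity into $H^1$ on $[T_0,\infty)$ then follows from Step 1.
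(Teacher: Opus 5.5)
Your architecture matches the paper's: global strong solutions for data with small $\dot H^1$ norm, a restart time taken from the energy inequality, then weak--strong uniqueness. Step 2 is more careful than the paper's write-up. The measure argument is what makes $T_0$ depend only on $\mathcal{E}_0,\nu,\eta$. Choosing $\|\nabla u(t_0)\|_{L^2}$ small as well keeps $\|(B+\omega)(t_0)\|_{L^2}$ bounded, and the paper's $c_0$ tacitly needs that.

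Step 1 is a different and more self-contained route. The paper puts smallness only on $\|\nabla B_0\|_{L^2}$. It therefore has to import the uniform bound \eqref{bound_omega} on $B+\omega$ from \cite{BKS} to get $\int_0^\infty\|\nabla u\|_{L^4}^2\,dt<\infty$, and the stretching term $\text{I}_2$ then forces an extra $\Lambda^{\frac12}B$ estimate. Since the energy inequality also makes $\|\nabla u(t_0)\|_{L^2}$ small at no cost, you can estimate $(u,B)$ jointly in $\dot H^1$ and use the MHD cancellation
\[
\sum_k\int\bigl[(B\cdot\nabla\partial_kB)\cdot\partial_ku+(B\cdot\nabla\partial_ku)\cdot\partial_kB\bigr]\,dx=0 .
\]
All non-Hall terms are then bounded by $\frac{\nu}{4}\|\Delta u\|_{L^2}^2+\frac{\eta}{4}\|\Delta B\|_{L^2}^2+C\|\nabla u\|_{L^2}^2\bigl(\|\nabla u\|_{L^2}^2+\|\nabla B\|_{L^2}^2\bigr)$. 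Only the Hall term $C\|\nabla B\|_{L^2}\|\Delta B\|_{L^2}^2$ needs smallness. This closes by Gronwall using $\int_0^\infty\|\nabla u\|_{L^2}^2\,dt\le\mathcal{E}_0/(2\nu)$, not by absorbing ``$\|\nabla u\|_{L^2}\|\nabla^2\cdot\|_{L^2}^2$-type'' bounds as you wrote. On $\R^2$, a term like $\|\nabla u\|_{L^2}\|\nabla B\|_{L^2}\|\Delta B\|_{L^2}$ cannot be absorbed by smallness alone, which is why $\epsilon_0$ must depend on $\mathcal{E}_0$.

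The real problem is in Step 3. The bound $\int|\nabla H||w||\nabla w|\,dx\lesssim\|\nabla^2H\|_{L^2}\|w\|_{L^4}\|\nabla w\|_{L^2}$ is false on $\R^2$: the two sides scale differently under $x\mapsto\lambda x$. It leads you to require $\nabla^2H\in L^4_{loc}L^2$, and none of your remedies can produce that near $t_0$:
\begin{itemize}
\item $L^4_tL^2_x$ integrability of $\nabla^2H$ at $t_0$ needs roughly $\dot H^{3/2}$ data. For $H^1$ data, parabolic smoothing gives only $\|\nabla^2H(t)\|_{L^2}\lesssim(t-t_0)^{-1/2}$, whose fourth power is not integrable.
\item A Leray--Hopf solution is only known to lie in $H^1$ at a.e.\ time, so shifting $t_0$ gains nothing.
\item In two dimensions $L^2_tH^2_x$ does not give $\nabla H\in L^2_tL^\infty_x$, so the Brezis--Gallouet/Osgood fallback does not start either.
\end{itemize}

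None of this is needed, because the $L^\infty H^1\cap L^2H^2$ bound from Step 1 already suffices. One fix is to apply Ladyzhenskaya to both factors:
\[
\int_{\R^2}|\nabla H||w||\nabla w|\,dx\le C\|\nabla H\|_{L^2}^{\frac12}\|\nabla^2H\|_{L^2}^{\frac12}\|w\|_{L^2}^{\frac12}\|\nabla w\|_{L^2}^{\frac32}\le\frac{\eta}{4}\|\nabla w\|_{L^2}^2+C\|\nabla H\|_{L^2}^2\|\nabla^2H\|_{L^2}^2\|w\|_{L^2}^2 .
\]
The other is what the paper does for $\text{II}_5$. Use $(\nabla\times w)\times w=\nabla\cdot(w\otimes w)-\frac12\nabla|w|^2$ and $\nabla\cdot(\nabla\times H)=0$ to move the derivative onto $H$:
\[
\int_{\R^2}(\nabla\times H)\cdot\bigl((\nabla\times w)\times w\bigr)\,dx=-\int_{\R^2}\nabla(\nabla\times H):(w\otimes w)\,dx\le C\|\nabla^2H\|_{L^2}\|w\|_{L^2}\|\nabla w\|_{L^2}.
\]
In either case the Gronwall coefficient lies in $L^1_t$ and the argument closes. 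This is exactly Proposition \ref{prop:weak-strong}, which assumes only $\nabla B_2\in L^2(0,T;H^1)$; the remaining terms $\text{II}_1$--$\text{II}_4$ need only $\nabla u_2,\nabla B_2\in L^2L^2$.
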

The mechanism behind this eventual smoothness lies in the fact that, in two dimensions, the energy dissipation in \eqref{energy ineq} controls the scaling-invariant norm of the magnetic field. In fact, \eqref{HMHD} does not admit a natural scaling invariance. Instead, one can consider the scaling-invariant norm of the electron MHD equations
\begin{equation}\label{EMHD}
\pa_t B-\eta\Delta B+\nabla\times((\nabla\times B)\times B)=0, \quad \dv B=0.
\end{equation}
\eqref{EMHD} contains the diffusion term and the Hall current term, the main part of \eqref{HMHD B}, and its scaling-invariant norm is $\dot{H}^{\frac{d}{2}}$ for $d=2, 3$. Hence, for $d=2$, the scaling-invariant norm coincides with the order of energy dissipation in \eqref{energy ineq}, which ensures that the $\|\nabla B\|_{L^2}$ norm of Leray-Hopf weak solutions eventually becomes small. In contrast, for $d=3$ since the scaling-invariant norm is stronger than $\dot{H}^1$, eventual smoothness would not be deduced from the energy inequality.

\subsection{Eventual regularity of magneto-vorticity fields in three dimensions}
Secondly, we consider Leray-Hopf weak solutions to \eqref{HMHD} for $d=3$. Even though Leray-Hopf weak solutions to \eqref{HMHD} might not become eventually smooth, we can show there exist solutions for which $(B+\nabla\times u)$ eventually possesses additional regularity. From now on, we write $\omega:= \nabla \times u$ for simplicity.

\begin{theorem}\label{thm:3D_weak_sol}\upshape
Let $d=3$ and $(u_0,B_0)\in L^2_\sigma(\R^3)$. Then, there exists a global Leray-Hopf weak solution $(u,B)$ of \eqref{HMHD}, which satisfies the followings:
\begin{enumerate}
\item There exists $T_0(\mathcal{E}_0,\nu,\eta)>0$ such that $B+\omega\in L^\infty([T_0,\infty);L^2(\R^3))$ and $\nabla(B+\omega)\in L^2(T_0,\infty;L^2(\R^3))$ and $\|(B+\omega)(t)\|_{L^2}\rightarrow 0$ as $t\rightarrow \infty$.
\item Additionally if $\nu=\eta$, then $B+\omega\in L^\infty([T_0,\infty);H^2(\R^3))$ and $\nabla(B+\omega)\in L^2(T_0,\infty;H^2(\R^3))$ and $\|\nabla^k (B+\omega)(t)\|_{L^2}^2\le C(1+t-T_0)^{-k}$ for $t\ge T_0$ and $k=1,2$.
\end{enumerate}
\end{theorem}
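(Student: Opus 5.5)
The plan rests on an algebraic cancellation. The Hall terms disappear from the equation for the magneto-vorticity $W:=B+\omega$. Take the curl of \eqref{HMHD u} and use $(u\cdot\nabla)u=\omega\times u+\nabla\tfrac{|u|^2}{2}$. This gives
\[
\pa_t\omega-\nabla\times(u\times\omega)-\nu\Delta\omega=\nabla\times((\nabla\times B)\times B).
\]
Adding this to \eqref{HMHD B}, the terms $\nabla\times((\nabla\times B)\times B)$ cancel. Writing $\nu\Delta\omega+\eta\Delta B=\nu\Delta W+(\eta-\nu)\Delta B$, we obtain
\[
\pa_t W-\nabla\times(u\times W)-\nu\Delta W=(\eta-\nu)\Delta B .
\]
This is a linear induction-type equation for $W$, transported and stretched by $u$. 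Its forcing $(\eta-\nu)\Delta B$ is controlled by the energy dissipation after one integration by parts. It vanishes identically when $\nu=\eta$, which is why part (2) needs that assumption.

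Since $W$ contains $\omega$, which a Leray--Hopf solution controls only in $L^2_tL^2_x$, I would prove everything at the level of a smooth approximation and pass to the limit. The approximation is a Galerkin/Friedrichs scheme, or a hyperdissipative regularization, chosen so that the cancellation above survives exactly. The $W$-estimates must be uniform in the approximation parameter. Then weak lower semicontinuity transfers them to the constructed Leray--Hopf solution; this is why the statement only asserts existence of such a solution. The time $T_0$ is chosen from \eqref{weak_en_ineq} alone, so it is uniform. Since $\int_0^\infty(\|\nabla u\|_{L^2}^2+\|\nabla B\|_{L^2}^2)\,dt\le C\mathcal E_0$, pigeonholing gives $T_0=T_0(\mathcal E_0,\nu,\eta)$ with two properties: $\|\omega(T_0)\|_{L^2}+\|B(T_0)\|_{H^1}$ is below a threshold $\varepsilon$, and $\int_{T_0}^\infty\|\nabla(u,B)\|_{L^2}^2\,dt\le\varepsilon$. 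Hence $\|W(T_0)\|_{L^2}\lesssim\varepsilon$.

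For part (1) I test the $W$-equation with $W$ and use $\int\nabla\times(u\times W)\cdot W=\int (W\cdot\nabla)u\cdot W$. The forcing term is $(\eta-\nu)\int\nabla B:\nabla W$, bounded by $\tfrac{\nu}{4}\|\nabla W\|_{L^2}^2+C\|\nabla B\|_{L^2}^2$, which is integrable on $[T_0,\infty)$.

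The stretching term is the main obstacle. I would bound it by $\|W\|_{L^2}\|W\|_{L^6}\|\nabla u\|_{L^3}$. Then I use $\|\nabla u\|_{L^3}\lesssim\|\omega\|_{L^3}\le\|W\|_{L^3}+\|B\|_{L^3}$, and Gagliardo--Nirenberg for $\|W\|_{L^3}$ and $\|B\|_{L^3}$. This yields
\[
\tfrac{d}{dt}\|W\|_{L^2}^2+\nu\|\nabla W\|_{L^2}^2\le C\|W\|_{L^2}^3\|\nabla W\|_{L^2}+C\|B\|_{L^2}\|\nabla B\|_{L^2}\|W\|_{L^2}^2+C\|\nabla B\|_{L^2}^2 .
\]
A continuity/bootstrap argument starting from the small value at $T_0$ then keeps $\|W\|_{L^2}$ small on $[T_0,\infty)$ and gives $\nabla W\in L^2(T_0,\infty;L^2)$. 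The delicate point is that the cubic term has no smallness of its own relative to the dissipation; I would handle it by Young's inequality together with the smallness of $\|W\|_{L^2}$. If that fails to close globally, my fallback is to interpolate the term against $\|\nabla u\|_{L^2}^2$, which is integrable. For $\|W(t)\|_{L^2}\to0$, I would use the integrability of $\|\nabla W\|_{L^2}^2$ together with a Fourier splitting argument on the $W$-equation. In that argument the low frequencies of $W=B+\omega$ are controlled by $\|(u,B)\|_{L^2}$ through the curl structure.

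For part (2), with $\nu=\eta$ the forcing vanishes, and I repeat the estimate after applying $\nabla^k$, $k=1,2$. The commutator terms of the form $\nabla^k\big((W\cdot\nabla)u-(u\cdot\nabla)W\big)$ are again estimated by writing $\nabla u$ through $\omega=W-B$. The part coming from $W$ is absorbed using the already established smallness of $\|W\|_{L^2}$. For the parts coming from $B$ and $u$, I would first upgrade $B$ from the control of $\nabla\times B$ given by $W-\omega$; this step needs care, because $u$ itself is only $L^2_tH^1_x$. The decay $\|\nabla^kW(t)\|_{L^2}^2\le C(1+t-T_0)^{-k}$ follows from time-weighted energy estimates. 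Multiply the $\dot H^k$ inequality by $(1+t-T_0)^k$ and use $\int_{T_0}^\infty(1+t-T_0)^{k-1}\|\nabla^kW\|_{L^2}^2\,dt<\infty$, which comes from the level $k-1$ estimate, inductively for $k=1,2$.
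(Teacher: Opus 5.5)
The gap is in the choice of $T_0$, which is exactly where the paper's proof does its work.

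Pigeonholing the dissipation in \eqref{weak_en_ineq} only yields times at which $\|\nabla u\|_{L^2}$ and $\|\nabla B\|_{L^2}$ (hence $\|\omega\|_{L^2}$) are small. It gives no control of $\|B(T_0)\|_{L^2}$, and for large data you need that control for two reasons:
\begin{itemize}
\item $\|W(T_0)\|_{L^2}\ge\|B(T_0)\|_{L^2}-\|\omega(T_0)\|_{L^2}$ must lie below a fixed threshold (the condition $C_{T_0}<1$).
\item The forcing contributes $\frac{(\nu-\eta)^2}{\nu}\int_{T_0}^\infty\|\nabla B\|_{L^2}^2\,dt\lesssim\frac{(\nu-\eta)^2}{\nu\eta}\mathcal E_{T_0}$, where $\mathcal E_{T_0}=\|u(T_0)\|_{L^2}^2+\|B(T_0)\|_{L^2}^2$. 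Smallness of this tail comes from $\mathcal E_{T_0}$ being small, not from pigeonholing.
\end{itemize}
No time depending only on $\mathcal E_0$ can provide this. If $\widehat{B_0}$ is supported in $\{|\xi|\le\lambda\}$ with $\lambda$ small, the Duhamel formula keeps $\|B(t)\|_{L^2}$ comparable to $\|B_0\|_{L^2}$ up to times of order $\lambda^{-2}$.

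The paper obtains smallness of $\mathcal E_{T_0}$ and $\|\omega(T_0)\|_{L^2}$ from the energy decay of Proposition~\ref{prop:non-uniform}, applied to a given Leray--Hopf solution. Its $T_0$ therefore depends on that solution, not only on $\mathcal E_0$. It then \emph{restarts}: it builds a new Leray--Hopf solution from $(u,B)(T_0)$ by the mollified scheme with the uniform bound of Proposition~\ref{prop:existence_B+omega}. It concatenates this with the old solution on $[0,T_0]$; the strong energy inequality survives the gluing for admissible $T_0$.

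Your plan of a single approximating family estimated from a common $T_0$ misses this step. Smallness of the limit at $T_0$ does not pass back to the approximants under weak convergence, so you would need energy decay of the approximants uniformly in the regularization parameter. Part (2) uses the same restart device at later times $T_1\le T_2$ where $W\in H^1$, resp.\ $H^2$. Your weight $(1+t-T_0)^k$ would require $W(T_0)\in H^k$, which you do not have; the weight $(t-T_0)^k$ would do.

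Your primary bound on the stretching term also does not close. Splitting $\|\nabla u\|_{L^3}\lesssim\|W\|_{L^3}+\|B\|_{L^3}$ produces two bad terms:
\begin{itemize}
\item $\|W\|_{L^2}^{3/2}\|\nabla W\|_{L^2}^{3/2}$ (not $\|W\|_{L^2}^3\|\nabla W\|_{L^2}$). Smallness of $\|W\|_{L^2}$ cannot absorb this into the dissipation. Its Young remainder $\|W\|_{L^2}^6$ has no a priori time integrability, since $\int_{T_0}^\infty\|B\|_{L^2}^2\,dt$ may diverge.
\item The Gronwall coefficient $\|B\|_{L^2}\|\nabla B\|_{L^2}$, which is in $L^2_t$ but need not be in $L^1_t$.
\end{itemize}
Your fallback is the right one and matches the structure of the paper's bound (Gronwall factor $\exp(C\nu^{-4}(\mathcal E_{T_0}+\mathcal E_{T_0}^2))$). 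Use $\|\nabla u\|_{L^3}\lesssim\|\nabla u\|_{L^2}^{1/2}\|\nabla\omega\|_{L^2}^{1/2}$ together with $\|\nabla\omega\|_{L^2}\le\|\nabla W\|_{L^2}+\|\nabla B\|_{L^2}$. After Young the coefficients are $\|W\|_{L^2}^2\|\nabla u\|_{L^2}^2$ and $\|\nabla u\|_{L^2}\|\nabla B\|_{L^2}$, both integrable on $[T_0,\infty)$ under a bootstrap bound on $\|W\|_{L^2}$.

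The same idea removes your worry in part (2). For $\nu=\eta$ the $W$-equation is linear in $W$, and the $H^1$, $H^2$ estimates close by Gronwall with coefficient $\|\nabla u\|_{L^2}^2+\|\Delta u\|_{L^2}^2\in L^1(T_0,\infty)$. This holds because $\|\Delta u\|_{L^2}=\|\nabla\omega\|_{L^2}\le\|\nabla W\|_{L^2}+\|\nabla B\|_{L^2}$ by part (1), so no upgrade of $B$ is needed.
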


This result is motivated by the fact that the regularity of $B+\omega$ can be controlled by the energy dissipation. Taking the curl operator to \eqref{HMHD u} and adding \eqref{HMHD B}, we derive the equation of $B+\omega$:
\begin{equation}\label{eq of B+omega}
\pa_t(B+\omega)+u\cdot\nabla(B+\omega)-(B+\omega)\cdot\nabla u=\nu\Delta\omega+\eta\Delta B.
\end{equation}
The disappearance of the Hall current term in \eqref{eq of B+omega} suggests that $B+\omega$ can exhibit better regularity properties, especially when $\nu=\eta$. It was shown in \cite{BKS} that $B+\omega$ is $L^2$-regular (\textit{i.e.}, $B+\omega\in L^\infty_tL^2_x\cap L^2_t\dot{H}^1_x$) or $H^2$-regular when $\nu=\eta$ as long as the fluid velocity $u$ satisfies Serrin type condition: $u\in L^2_t\text{BMO}_x$, or $u\in L^q_tL^p_x$ for $3/p+2/q\le 1$. Moreover, as a corollary of the result, we obtain that if 
\begin{equation}\label{condition of B+omega}
\|(B+\omega)(t_0)\|_{L^2}^2+(\|u(t_0)\|_{L^2}^2+\|B(t_0)\|_{L^2}^2)\frac{(\nu-\eta)^2}{\nu\eta}\ll 1 \quad \mbox{at some } \ t_0>0,
\end{equation}
then $B+\omega$ is $L^2$-regular for all $t\ge t_0$. Hence, by an argument similar to that for Theorem \ref{thm:2D-eventual-regularity}, the energy inequality \eqref{energy ineq} and the long-time decay of $\|u(t)\|_{L^2}^2+\|B(t)\|_{L^2}^2$ (Proposition \ref{prop:non-uniform}) imply that Leray-Hopf weak solutions constructed in Theorem \ref{thm:3D_weak_sol} eventually satisfy the condition \eqref{condition of B+omega}. However, we do not guarantee that Theorem \ref{thm:3D_weak_sol} holds for all Leray-Hopf weak solutions, due to the lack of uniqueness in the continuation of weak solutions. Nevertheless, we can conclude that the blow-up scenario of $B+\omega$ for strong solutions introduced in \cite{BKS} cannot occur at infinite time, but only possibly in finite time.

\subsection{Decoupled decay rates}
Finally, we study algebraic decay rates for solutions of \eqref{HMHD} by using a variant of Schonbek's Fourier splitting method. In \cite{BJS}, the authors of the current paper and their collaborator generalized the Fourier splitting method \cite{Schonbek} by replacing the usual $L^1$ assumption with a pseudomeasure condition.\footnote{Define $\mathcal{Y}^\sigma(\R^d):=\{f\in\mathcal{S}'(\R^d): \sup_{\xi\in\R^d}|\xi|^\sigma|\widehat f(\xi)|<\infty\}$. Then $L^1(\R^d)\hookrightarrow\mathcal{Y}^0(\R^d)$.} Since the Fourier splitting argument uses only low-frequency information on the initial data, it is sufficient for our purposes to impose the
following low-frequency pseudomeasure condition: 
\[
\mathcal{Y}^\sigma_l(\R^d) := \left\{ f\in\mathcal{S}'(\R^d) \;:\; \sup_{|\xi|\le 1}|\xi|^\sigma|\widehat f(\xi)|<\infty
\right\}.
\]

With this notation, we first record the following coupled decay estimate for the three-dimensional Hall-MHD system, which is a slight extension of \cite[Theorem 1.3]{BJS}. Since the underlying argument is analogous, we omit its proof.
\begin{proposition}\upshape \label{prop:coupled_decay}
Let $d=3$, and let $(u,B)$ be a global Leray-Hopf weak solution of \eqref{HMHD} corresponding to $(u_0,B_0)\in (L^2_\sigma\cap\mathcal{Y}^{\sigma_1}_l)(\R^3)\times (L^2_\sigma\cap\mathcal{Y}^{\sigma_2}_l)(\R^3)$ for $\sigma_1,\sigma_2\in[-1,\frac{3}{2})$. Then, $(u,B)$ decays as
\[
\|u(t)\|_{L^2}^2+\|B(t)\|_{L^2}^2\le C(1+t)^{-\frac{3}{2}+\max{\{\sigma_1,\sigma_2\}}},\quad\text{for all $t\ge0$.}
\]
\end{proposition}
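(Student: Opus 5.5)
We will follow the generalized Fourier splitting argument of \cite{BJS}, working directly with the energy inequality \eqref{energy ineq}. The Hall term is the main difficulty for higher-order estimates, but it contributes nothing at the $L^2$ level, and its Fourier transform enters only through a low-frequency bound.

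\textbf{Step 1 (splitting).} Set $\mu=\min\{\nu,\eta\}$ and $E(t)=\|u(t)\|_{L^2}^2+\|B(t)\|_{L^2}^2$. Formally, from \eqref{energy ineq},
\[
\frac{d}{dt}E+2\mu\|\nabla(u,B)\|_{L^2}^2\le 0.
\]
For a Leray--Hopf weak solution this is justified in integrated form for a.e.\ $s$, using the strong energy inequality with weight $(1+t)^m$, or by working with a Galerkin approximation and passing to the limit. Choose the ball $S(t)=\{|\xi|\le r(t)\}$ with $r(t)^2=\frac{m}{2\mu(1+t)}$ for $m$ large. Since $\|\nabla f\|^2\ge r^2\int_{S^c}|\widehat f|^2$, we get
\[
\frac{d}{dt}\bigl[(1+t)^mE\bigr]\le C m (1+t)^{m-1}\int_{S(t)}\bigl(|\widehat u|^2+|\widehat B|^2\bigr)\,d\xi .
\]

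\textbf{Step 2 (low-frequency pointwise bound).} Write the Duhamel formula in Fourier variables. For the velocity,
\[
|\widehat u(t,\xi)|\le e^{-\nu|\xi|^2t}|\widehat u_0(\xi)|+\int_0^t e^{-\nu|\xi|^2(t-s)}|\xi|\,\bigl|\widehat{u\otimes u}-\widehat{B\otimes B}\bigr|\,ds\le |\widehat u_0(\xi)|+C|\xi|\,t\,\mathcal E_0 ,
\]
using the divergence form of the nonlinearities and $|\widehat{f g}|\le\|f\|_{L^2}\|g\|_{L^2}$. For $B$, the terms $\nabla\times(u\times B)$ and $\nabla\times\nabla\cdot(B\otimes B)$ are handled the same way. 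The Hall term has two derivatives in divergence form, which gives $|\xi|^2\,|\widehat{B\otimes B}|\le|\xi|^2\mathcal E_0$. We bound
\[
\int_0^t e^{-\eta|\xi|^2(t-s)}|\xi|^2\,ds\le \eta^{-1},
\]
or, more crudely, $|\xi|^2 t\le |\xi|\,r t\lesssim |\xi|\,t^{1/2}$ on $S(t)$. For $|\xi|\le1$ the data terms are at most $C|\xi|^{-\sigma_i}$, and $\sigma_i<3/2$ makes $|\xi|^{-2\sigma_i}$ integrable near $0$ in $\R^3$. Integrating over $S(t)$, with $r(t)\le1$ for $t$ large, gives
\[
\int_S\bigl(|\widehat u|^2+|\widehat B|^2\bigr)\,d\xi\lesssim r^{3-2\max\{\sigma_1,\sigma_2\}}+r^5t^2+r^3 ,
\]
where the last term comes from the Hall contribution bounded by $\eta^{-1}\mathcal E_0$. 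This first pass yields only a rough decay rate, of order $(1+t)^{-\epsilon}$ or $(1+t)^{-1/2}$.

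\textbf{Step 3 (bootstrap).} Replace the crude bounds $\|u\|_{L^2}^2,\|B\|_{L^2}^2\le\mathcal E_0$ inside the nonlinear integral by the decay just obtained, and iterate. For instance, the Hall contribution becomes
\[
|\xi|^2\int_0^te^{-\eta|\xi|^2(t-s)}E(s)\,ds ,
\]
which is controlled by $\sup_{s\ge t/2}E(s)+|\xi|^2\int_0^{t/2}E$. After finitely many iterations the nonlinear contribution is dominated by the data term $(1+t)^{-3/2+\max\{\sigma_1,\sigma_2\}}$, provided $\max\sigma<3/2$. The lower range $\sigma\ge-1$ guarantees this: when $\sigma_i$ is close to $-1$ the target rate $-5/2$ requires $|\widehat{\text{nonlinear}}|\lesssim|\xi|$ behavior, and the Hall term supplies $|\xi|^2$, which is sufficient.

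\textbf{Main obstacle.} The hardest step is the bootstrap closing when $\max\sigma$ is near $-1$, i.e.\ the fastest rate. The crude time integral of the Hall term must be sharpened to the rate $-5/2$. This requires showing that $\int_0^\infty E(s)\,ds$ is finite once $E$ decays faster than $t^{-1}$, and then using $|\xi|^2\int_0^t E\lesssim|\xi|^2$. Integrated over $S(t)$, this gives $r^7\sim t^{-7/2}$, which is acceptable. The same bound $|\xi|\int_0^t\|u\|_{L^2}\|B\|_{L^2}\,ds$ handles the other quadratic terms and gives $r^5$, i.e.\ $t^{-5/2}$, which is exactly borderline, so it closes once $\int_0^\infty E<\infty$. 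The final step is to pass the estimate from the approximating solutions to the given weak solution.
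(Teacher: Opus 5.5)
Your plan is the generalized Fourier splitting argument the paper defers to \cite{BJS}: the energy inequality on the shrinking ball $S(t)$, low-frequency control of $\widehat u,\widehat B$ from the Fourier Duhamel formula (the Hall term is harmless because it carries $|\xi|^2$), and a finite bootstrap that reaches the borderline rate $t^{-5/2}$ at $\max\{\sigma_1,\sigma_2\}=-1$ once $\int_0^\infty E<\infty$; the paper differs only in bookkeeping, packaging the low-frequency control as a uniform-in-time $\mathcal{Y}^{\sigma}_l$ bound fed into Lemma \ref{lem:Fourier_splitting}, whereas you bound $\int_{S(t)}|\widehat f|^2$ directly. Do drop the Galerkin alternative and your closing step of passing the estimate from approximating solutions to the given one: in three dimensions an arbitrary Leray--Hopf solution is not known to be such a limit (uniqueness is open), so that route would give the decay only for constructed solutions, while everything you actually use already holds for the given solution---namely the strong energy inequality, which yields the weighted splitting inequality directly (e.g.\ via $\bigl((1+t)^mE\bigr)'\le m(1+t)^{m-1}\int_{S(t)}(|\widehat u|^2+|\widehat B|^2)\,d\xi$ in $\mathcal{D}'$ plus lower semicontinuity of $E$), and the Fourier Duhamel formula for a.e.\ $t$ (Appendix \ref{sec:app.C}).
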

For Leray-Hopf weak solutions, the energy inequality provides only control of the combined energy
$\|u(t)\|_{L^2}^2+\|B(t)\|_{L^2}^2$. Accordingly, the above proposition yields a coupled decay estimate, but it does not distinguish the decay mechanisms of the velocity and magnetic fields. To obtain decoupled decay
rates, one needs enough regularity to apply the Fourier splitting argument separately to the two equations and to control the nonlinear terms at that level.

This is precisely where eventual smoothness becomes useful in two dimensions. By Theorem \ref{thm:2D-eventual-regularity}, every two-dimensional Leray--Hopf weak solution becomes smooth after some finite time. Once this
regularity is available, the Fourier splitting argument can be applied separately to the velocity and magnetic equations, yielding the following decoupled decay rates.

\begin{theorem}\label{thm:2D-decay}\upshape
Let $d=2$, and let $(u,B)$ be a global Leray-Hopf weak solution of \eqref{HMHD} corresponding to $(u_0,B_0)\in L^2_\sigma(\R^2)$. Then, $(u,B)$ decays as follows provided that the initial data meet the additional condition in each case. 
\begin{enumerate}
\item If $B_0\in \mathcal{Y}^{\sigma_2}_l(\R^2)$ for $\sigma_2\in(-1,1)$, then 
\[
\|B(t)\|_{L^2}^2\le C(1+t)^{-1+\sigma_2},\quad\text{for all $t\ge0$.}
\]
\item If $(u_0,B_0)\in \mathcal{Y}^{\sigma_1}_l(\R^2)\times\mathcal{Y}^{\sigma_2}_l(\R^2)$ for $(\sigma_1,\sigma_2)\in [-1,1)\times [-1,1)$ except for the pair $(\sigma_1,\sigma_2)=(-1,0)$, then 
\[
\|u(t)\|_{L^2}^2\leq C(1+t)^{-1+\max{\{\sigma_1,-1+2\sigma_2\}}},\quad \|B(t)\|_{L^2}^2\leq C(1+t)^{-1+\sigma_2},\quad \text{for all $t\ge0$.}
\]
If $(u_0,B_0)\in \mathcal{Y}^{-1}_l(\R^2)\times\mathcal{Y}^{0}_l(\R^2)$, then 
\[
\|u(t)\|_{L^2}^2\leq C(1+t)^{-2}\lt(\log(e+t)\rt)^2,\quad \|B(t)\|_{L^2}^2\leq C(1+t)^{-1},\quad \text{for all $t\ge0$.}
\]
\end{enumerate}
\end{theorem}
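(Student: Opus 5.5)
We will use Schonbek's Fourier splitting method in the pseudomeasure form of \cite{BJS}, applied separately to the $u$- and $B$-equations. This is possible because Theorem \ref{thm:2D-eventual-regularity} makes the solution smooth after $T_0$. On $[0,T_0]$ all bounds are trivial: the energy inequality \eqref{weak_en_ineq} gives $\|u(t)\|_{L^2}^2+\|B(t)\|_{L^2}^2\le\mathcal{E}_0$, and $(1+t)^{-\alpha}$ is bounded below there. So it suffices to prove the rates for $t\ge T_0$, where $(u,B)$ is a smooth solution.

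First we establish a coupled decay $\|u\|_{L^2}^2+\|B\|_{L^2}^2\le C(1+t)^{-1+\max\{\sigma_1,\sigma_2\}}$, the 2D analogue of Proposition \ref{prop:coupled_decay}. Next we show that the higher norms decay as well: using smallness of $\|\nabla B\|_{L^2}$ after $T_0$, the $\dot H^1$ energy estimate gives $\|\nabla(u,B)\|_{L^2}^2\le C(t-T_0)^{-1}\|(u,B)(T_0)\|_{L^2}^2$-type bounds, and more precisely $t\|\nabla(u,B)\|_{L^2}^2$ inherits the $L^2$ rate.

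For the $B$-equation, the $L^2$ energy identity for $B$ alone contains the cross term $\int (\nabla\times(u\times B))\cdot B$, which does not vanish. We therefore take the whole energy but split in frequency only for $B$: we write
\[
\frac{d}{dt}\|B\|_{L^2}^2+2\eta\|\nabla B\|_{L^2}^2=2\int (u\times B)\cdot(\nabla\times B),
\]
and bound the right side by $\|u\|_{L^4}\|B\|_{L^4}\|\nabla B\|_{L^2}$, which is controlled using the decay of $\nabla u$ and $\nabla B$. The low-frequency part of $\widehat B$ is estimated through the Duhamel formula:
\[
|\widehat B(t,\xi)|\le e^{-\eta|\xi|^2t}|\widehat B_0|+|\xi|\int_0^t\big(|\widehat{u\otimes B}|+|\xi||\widehat{B\otimes B}|\big)\,ds .
\]
Here $|\widehat{u\otimes B}|\le\|u\|_{L^2}\|B\|_{L^2}$. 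Both nonlinear terms carry at least one derivative, so the bilinear contribution is better by a factor $|\xi|$ than the linear part $|\xi|^{-\sigma_2}$ whenever $\sigma_2>-1$. This is why only the assumption on $B_0$ is needed in part (1), and why $\sigma_2>-1$ is required there. Fourier splitting with the ball $|\xi|^2\le m/((1+t)\eta)$ then gives $\|B\|_{L^2}^2\lesssim(1+t)^{-1+\sigma_2}$ plus nonlinear terms. We close this by bootstrapping from the coupled rate, or from the bare $\mathcal{E}_0$ bound, iterating finitely many times.

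For $u$ we use the same argument with forcing $(\nabla\times B)\times B=\nabla\cdot(B\otimes B)-\nabla|B|^2/2$. Its Fourier transform is bounded by $|\xi|\|B\|_{L^2}^2\lesssim|\xi|(1+s)^{-1+\sigma_2}$, and $u\otimes u$ is handled similarly. On the splitting ball this yields
\[
\int_{|\xi|\le r(t)}|\widehat u|^2\,d\xi\lesssim r^{2-2\sigma_1}+r^4\Big(\int_0^t(1+s)^{-1+\sigma_2}\,ds\Big)^2 .
\]
With $r^2\sim(1+t)^{-1}$ this gives $(1+t)^{-1+\sigma_1}+(1+t)^{-2+2\sigma_2}$, which is exactly the rate $-1+\max\{\sigma_1,-1+2\sigma_2\}$. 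When $\sigma_2=0$ the time integral is $\log(e+t)$, and squaring it produces the $(\log)^2$ correction. That correction is needed only when $\sigma_1=-1$, because otherwise the term $(1+t)^{-1+\sigma_1}$ dominates. The $u\otimes u$ contribution is handled by the same bootstrap.

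The main obstacle is closing the bootstraps with the Hall term $\nabla\times((\nabla\times B)\times B)$ in the $B$ low-frequency estimate. Its Fourier symbol is $|\xi|^2\|B\|_{L^2}^2$ only in appearance; we must also make sure the energy term $\int(u\times B)\cdot\nabla\times B$ is integrable against the Fourier splitting weight $(1+t)^m$. We would handle this using the eventual smallness of $\|\nabla B\|_{L^2}$ from Theorem \ref{thm:2D-eventual-regularity} together with the Ladyzhenskaya inequality.
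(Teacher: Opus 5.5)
There is a genuine gap in how you close the low-frequency estimates. In two dimensions the splitting ball has radius $r(t)\sim(1+t)^{-1/2}$, and the bilinear Duhamel terms are \emph{exactly critical} there. Suppose $\|u(s)\|_{L^2}\le A$ and $\|B(s)\|_{L^2}^2\le K(1+s)^{-\mu}$. Then on the ball $|\xi|\int_0^t e^{-(t-s)|\xi|^2}\|u\|_{L^2}\|B\|_{L^2}\,ds\lesssim A\sqrt{K}\,|\xi|\,t^{1-\mu/2}$. Squaring and integrating over a disc of area $r^2$ gives $c\,A^2K(1+t)^{-\mu}$. That is the same rate you fed in, with a constant that is not small. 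Equivalently, $\sup_t\|B(t)\|_{\mathcal{Y}^{\sigma_2}_l}\le\|B_0\|_{\mathcal{Y}^{\sigma_2}_l}+cA\sqrt{K}$, and Fourier splitting then returns $K\lesssim(\|B_0\|_{\mathcal{Y}^{\sigma_2}_l}+cA\sqrt{K})^2$, which closes only if $A$ is small.

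Your heuristic that the bilinear term is ``better by a factor $|\xi|$'' overlooks that the time integral over $s\lesssim|\xi|^{-2}$ eats that factor back. This has three consequences:
\begin{enumerate}
\item Iterating ``from the bare $\mathcal{E}_0$ bound'' never leaves $\mu=0$, since $r^4t^2\sim1$.
\item Bootstrapping ``from the coupled rate'' is unavailable in part (1), because $u_0$ is only in $L^2$ and $u$ has no decay rate at all.
\item The 2D coupled estimate you want as a first step hits the same wall through $u\otimes u$ and $u\times B$. In the 3D Proposition \ref{prop:coupled_decay}, $r^5t^2\sim t^{-1/2}$ supplies a starting rate; in 2D the first step yields nothing.
\end{enumerate}

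The missing ingredient is a source of smallness, which the paper takes from Proposition \ref{prop:non-uniform}. Choose $T_0$ beyond the eventual-regularity time so that $\|u(t)\|_{L^2}+\|B(t)\|_{L^2}\le\delta$ for all $t\ge T_0$. Check via Duhamel on $[0,T_0]$ that $\|u(T_0)\|_{\mathcal{Y}^{\sigma_1}_l}$ and $\|B(T_0)\|_{\mathcal{Y}^{\sigma_2}_l}$ remain finite (this uses $\sigma_i<1$), and restart at $T_0$. The paper then closes the weighted bound $\sup_\tau(\tau-T_0)^{\frac{1-\sigma_2}{2}}\|B(\tau)\|_{L^2}$ directly from the mild formulation. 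It uses $L^p\to L^2$ heat estimates, Gagliardo--Nirenberg, and the derivative decay of Proposition \ref{prop:higher-decay}. The nonlinear term carries a factor $\delta^{\frac1p-\frac12}$ and is absorbed; the same is done for $u$ in Case 2-1. Your frequency-splitting version can be repaired the same way, with a weighted-sup bootstrap in which the $u\times B$ and $u\otimes u$ coefficients are $O(\delta)$. As written, however, it does not close.

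Two secondary points:
\begin{enumerate}
\item The cross term $\int(B\cdot\nabla u)\cdot B$ in the $B$-energy inequality is of borderline size as a forcing term. It should be absorbed through an integrating factor $e^{-C\int_0^t\|\nabla u\|_{L^2}^2}$, which is the paper's $M$.
\item The endpoint $\sigma_2=-1$ in part (2), which your argument does not treat, needs $\int^\infty\|u\|_{L^2}\|B\|_{L^2}\,d\tau<\infty$. This comes from the $u$-rate supplied by $\sigma_1<1$.
\end{enumerate}
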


Theorem \ref{thm:2D-decay} exhibits an asymmetry between the magnetic and velocity fields. Part (1) shows that a low-frequency assumption on $B_0$ alone yields an explicit algebraic decay rate for the magnetic field. No corresponding low-frequency assumption on $u_0$ is needed for this estimate. This reflects the structure of the magnetic equation \eqref{2.5Hall2}: $u$ appears only linearly in $\widetilde{\nabla}\times(u\times B)$, and the energy bound of $u$ is sufficient for the Fourier splitting argument with respect to $B$ (except for the endpoint case $\sigma_2=-1$).

By contrast, the velocity equation contains the Lorentz force $(\widetilde\nabla\times B)\times B$. Hence the decay of $u$ is affected not only by the low-frequency behavior of $u_0$, but also by the decay already obtained for $B$. This interaction leads to the exponent $\max\{\sigma_1,-1+2\sigma_2\}$ in part (2), with the logarithmic correction appearing at the borderline pair $(\sigma_1,\sigma_2)=(-1,0)$.

In three dimensions, eventual smoothness is not known for arbitrary Leray-Hopf weak solutions of \eqref{HMHD}. Thus, we establish the decoupled decay estimates for global strong solutions, whose existence is known for sufficiently small initial data \cite{Chae Degond Liu, DT22, Tan}. This additional regularity allows us to apply the Fourier splitting argument separately to the velocity and magnetic equations.

\begin{theorem}\upshape \label{thm:3D-decay}
Let $d=3$, and let $(u,B)$ be a global strong solution of \eqref{HMHD} with divergence-free initial data $(u_0,B_0)$. Assume in addition that $(u_0,B_0)\in L^2_\sigma(\R^3)$. Then the following decay estimates hold.
\begin{enumerate}
\item If $B_0\in \mathcal{Y}^{\sigma_2}_l(\R^3)$ for $\sigma_2\in[-1,\frac{3}{2})$, then
\[
\|B(t)\|_{L^2}^2 \le C(1+t)^{-\frac{3}{2}+\sigma_2},\quad\text{for all $t\ge0$.}
\]
\item If $u_0\in\mathcal{Y}^{\sigma_1}_l(\R^3)$ for $\sigma_1\in[1,\frac{3}{2})$, then
\[
\|u(t)\|_{L^2}^2\le C(1+t)^{-\frac{3}{2}+\sigma_1},\quad\text{for all $t\ge0$.}
\]
\item If $(u_0,B_0)\in\mathcal{Y}^{\sigma_1}_l(\R^3)\times \mathcal{Y}^{\sigma_2}_l(\R^3)$ for $\sigma_1\in[-1,1]$, $\sigma_2\in [-1,\frac{3}{2})$ except for the pair $(\sigma_1,\sigma_2)=(-1,\frac12)$, then
\[
\|u(t)\|_{L^2}^2\le C(1+t)^{-\frac{3}{2}+\max{\{\sigma_1,-2+2\sigma_2\}}},\quad\text{for all $t\ge0$.}
\]
If $(u_0,B_0)\in\mathcal{Y}^{-1}_l(\R^3)\times \mathcal{Y}^{\frac{1}{2}}_l(\R^3)$, then
\[
\|u(t)\|_{L^2}^2\le C(1+t)^{-2}\lt(\log{(e+t)}\rt)^2,\quad\text{for all $t\ge0$.}
\]
\end{enumerate}
\end{theorem}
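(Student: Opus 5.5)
We prove the three parts of Theorem \ref{thm:3D-decay} with the generalized Fourier splitting method of \cite{BJS}. We apply it separately to the magnetic equation \eqref{HMHD B} and to the velocity equation \eqref{HMHD u}. Each step feeds a preliminary decay rate back in, so the argument is a bootstrap.

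\textbf{Preliminaries.} Since $(u,B)$ is a global strong solution, it is in particular a Leray-Hopf weak solution, and the energy identity holds. Proposition \ref{prop:coupled_decay} then gives a coupled rate, and in every case at least $\|u\|_{L^2}^2+\|B\|_{L^2}^2\to0$. We also need uniform-in-time bounds on $\|\nabla u\|_{L^2}$ and $\|\nabla B\|_{H^1}$ for $t\ge1$, together with a mild decay of the higher norms. These come from the small-data framework of \cite{Chae Degond Liu, DT22, Tan}, or from energy estimates at the $H^2$ level once the solution is strong.

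\textbf{Fourier splitting for $B$.} Let $S(t)=\{|\xi|\le (k/(1+t))^{1/2}\}$. From
\[
\frac{d}{dt}\|B\|_{L^2}^2+2\eta\|\nabla B\|_{L^2}^2=0
\]
we obtain
\[
\frac{d}{dt}\big[(1+t)^k\|B\|_{L^2}^2\big]\le C(1+t)^{k-1}\int_{S(t)}|\widehat B|^2\,d\xi .
\]
To bound the low-frequency part, we write Duhamel's formula in Fourier variables:
\[
|\widehat B(t,\xi)|\le e^{-\eta|\xi|^2t}|\widehat B_0|+|\xi|\int_0^t\big(|\widehat{u\times B}|+|\xi||\widehat{B\otimes B}|\big)\,ds .
\]
The initial term contributes $C|\xi|^{-2\sigma_2}$, and integrating over $S(t)$ gives $(1+t)^{-3/2+\sigma_2}$. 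This is integrable near $0$ precisely because $\sigma_2<3/2$. For the nonlinear terms, $|\widehat{fg}|\le\|f\|_{L^2}\|g\|_{L^2}$. The transport term gives $|\xi|\int_0^t\|u\|\|B\|\,ds$, which by energy boundedness is at most $|\xi|\,t^{1/2}\big(\int\|B\|^2\big)^{1/2}$. The Hall term gives $|\xi|^2\int_0^t\|B\|^2$. Inserting a preliminary rate for $\|B\|$ and iterating improves the rate up to $(1+t)^{-3/2+\sigma_2}$. Strong regularity is needed at exactly this point: the Hall term has two derivatives on the nonlinearity, and we absorb it using the $L^2_t\dot H^1$ and higher-order bounds. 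Since $u$ enters only linearly, only its energy bound is used. This proves part (1).

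\textbf{Fourier splitting for $u$ (parts (2) and (3)).} The same scheme applies to $u$, now with
\[
|\widehat u(t,\xi)|\le e^{-\nu|\xi|^2t}|\widehat u_0|+C|\xi|\int_0^t\big(\|u\|_{L^2}^2+\|B\|_{L^2}^2\big)\,ds .
\]
Here the Lorentz force is written as $\nabla\cdot(B\otimes B)$ minus a gradient, so only one derivative falls on the nonlinearity. The initial term yields $(1+t)^{-3/2+\sigma_1}$.

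\emph{Part (2).} For $\sigma_1\ge1$ the target rate is at most $(1+t)^{-1/2}$. The nonlinear term is then controlled by $|\xi|^2\big(\int_0^t\|B\|^2\big)^2\lesssim|\xi|^2 t^{2}\sup\|B\|^4$, using only the coupled decay rate. A bootstrap then closes at the slow rate.

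\emph{Part (3).} We use part (1). The magnetic contribution has the form $|\xi|\int_0^t(1+s)^{-3/2+\sigma_2}\,ds$. After the $S(t)$ integration this yields $(1+t)^{-3/2-2+2\sigma_2}$ when $\sigma_2>1/2$, and a decaying integral otherwise. This is the source of the exponent $-2+2\sigma_2$, while the velocity self-interaction term is treated by the bootstrap. At $\sigma_2=1/2$ the time integral is logarithmic. Combined with $\sigma_1=-1$, which gives the same algebraic rate $(1+t)^{-5/2}$ from the initial data, this produces the $\log$ correction. We still need to check that the velocity self-term, of size $|\xi|\int\|u\|^2$, never dominates; this requires the preliminary rate $\|u\|^2\lesssim(1+t)^{-1}$ or better, which the first iterations supply.

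\textbf{Main obstacle.} We expect the main obstacle to be the Hall term in part (1). Its two derivatives produce $|\xi|^4\big(\int_0^t\|B\|^2\,ds\big)^2$ on $S(t)$, and making this contribution subordinate for all $\sigma_2<3/2$ requires careful iteration. The bootstrap must start from Proposition \ref{prop:coupled_decay} and improve the exponent in finitely many steps, tracking each endpoint.
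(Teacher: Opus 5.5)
There is a genuine gap, and it is exactly where the decoupling happens. The identity $\frac{d}{dt}\|B\|_{L^2}^2+2\eta\|\nabla B\|_{L^2}^2=0$ is false. Only the Hall term drops out of the magnetic energy; the induction term leaves
\[
\frac{1}{2}\frac{d}{dt}\|B\|_{L^2}^2+\eta\|\nabla B\|_{L^2}^2=\int_{\R^3}(B\cdot\nabla u)\cdot B\,dx=-\int_{\R^3}(B\cdot\nabla B)\cdot u\,dx,
\]
which cancels only against the Lorentz work in the combined energy. For the same reason, the energy inequality for $u$, which you never write down, has the nonzero right-hand side $\int(B\cdot\nabla B)\cdot u\,dx$. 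Since \eqref{energy ineq} controls only $\|u\|_{L^2}^2+\|B\|_{L^2}^2$, these cross terms are the whole difficulty. Your claim that ``only the energy bound of $u$ is used'' fails at precisely this step.

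The paper handles the $B$ side with strong regularity. It uses $\|u\|_{L^3}\le C\|u\|_{L^2}^{1/2}\|\nabla u\|_{L^2}^{1/2}\to0$, which follows from Proposition \ref{prop:non-uniform} and the $H^1$ bound of the strong solution. Hence $|\int(B\cdot\nabla B)\cdot u|\le C\|u\|_{L^3}\|\nabla B\|_{L^2}^2\le\frac12\|\nabla B\|_{L^2}^2$, and so $\frac{d}{dt}\|B\|_{L^2}^2+\|\nabla B\|_{L^2}^2\le0$ for $t\ge T_0$, which is \eqref{energy of B}.

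For $u$, the paper bounds the Lorentz term by $\frac12\|\nabla u\|_{L^2}^2+C\|B\|_{L^2}\|\nabla B\|_{L^2}^3$ and then invokes Proposition \ref{prop:higher-decay}. In part (2), $\|\nabla B\|_{L^2}^2\lesssim(1+t)^{-1}$ gives a forcing of size $(1+t)^{-3/2}$. This is admissible only because $\sigma_1\ge1$. In part (3), part (1) combined with the second half of Proposition \ref{prop:higher-decay} gives $\|\nabla B\|_{L^2}^2\lesssim(1+t)^{-5/2+\sigma_2}$. The forcing is then $(1+t)^{-9/2+2\sigma_2}$, and through the forced version of Fourier splitting (Lemma \ref{lem:Fourier_splitting}) it contributes exactly $(1+t)^{-3/2+(-2+2\sigma_2)}$. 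So the exponent $-2+2\sigma_2$ also comes from this energy forcing, not only from your low-frequency integral. Without this estimate your $u$-bootstrap has no valid differential inequality to start from.

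Conversely, the obstacle you single out is not one. On $S(t)\subset\{|\xi|\le1\}$ the Hall contribution is dominated by the induction one, because $|\xi|^{2}\|B\|_{L^2}^2\le\sqrt{\mathcal{E}_0}\,|\xi|\,\|B\|_{L^2}$ there (compare \eqref{Y_norm of B}). It also vanishes identically in the energy identity, so it needs neither strong regularity nor any delicate iteration. Strong regularity is needed for the two cross terms above instead.

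Two smaller points. Proposition \ref{prop:coupled_decay} is not available in parts (1) and (2), where only one of $u_0,B_0$ carries a pseudomeasure assumption; fortunately you do not actually need it. Once both energy inequalities are corrected as described, your low-frequency bookkeeping, including the logarithm at $(\sigma_1,\sigma_2)=(-1,\frac12)$, matches the paper's.
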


The estimates for $B$ depend only on the low-frequency behavior of $B_0$. For the velocity field, there are two regimes. If $\sigma_1\in[1,\frac32)$, the decay rate of $u$ is sufficiently slow, and no additional low-frequency condition on $B_0$ is needed. For faster velocity decay, namely when $\sigma_1\in[-1,1]$, the Lorentz force may dominate the large-time behavior. In that regime the decay rate is determined by the larger of the two low-frequency contributions, leading to the exponent $\max\{\sigma_1,-2+2\sigma_2\}$. The logarithmic correction occurs at the borderline pair $(\sigma_1,\sigma_2)=(-1,\frac12)$.

\begin{remark}\upshape
We have a list of remarks related to Theorem \ref{thm:2D-decay} and Theorem \ref{thm:3D-decay}.
\begin{enumerate}
\item By setting $B\equiv0$, we derive the decay results of the incompressible Navier-Stokes equations for $d=2, 3$: Let $u$ be a global Leray-Hopf weak solution of the incompressible Navier-Stokes equations corresponding to $u_0\in L^2_\sigma(\R^d)\cap\mathcal{Y}^\sigma_l(\R^d)$ for $\sigma\in [-1,\frac{d}{2})$. Then, $u$ decays as
\[
\|u(t)\|_{L^2}^2\le C(1+t)^{-\frac{d}{2}+\sigma},\quad\text{for all $t\ge0$.}
\]
\item If we consider Leray-Hopf weak solutions for the classical MHD system, the solutions attain the eventual smoothness in three dimensions as well as two dimensions. Therefore, we can get the same decay results with Theorems \ref{thm:2D-decay} and \ref{thm:3D-decay} for any global Leray-Hopf weak solutions of the MHD system. To the best of our knowledge, such decoupled decay estimates for $u$ and $B$ are new not only for the Hall-MHD system but also for the classical MHD system.
\end{enumerate}
\end{remark}

\textbf{Structure of the paper.} In the following sections, we will present the proofs of our main theorems in order. In Section \ref{sec:2D-eventual-regularity}, we prove the eventual smoothness for $d=2$ (Theorem \ref{thm:2D-eventual-regularity}). In Section \ref{sec:3D_weak_sol}, we present the existence of 3D weak solutions with eventual regularity of $B+\omega$ (Theorem \ref{thm:3D_weak_sol}). In Section \ref{sec:decay-rates}, we provide the decay rates of the velocity field and magnetic field to \eqref{HMHD} based on the generalized Fourier splitting method for $d=2$ (Theorem \ref{thm:2D-decay}) and $d=3$ (Theorem \ref{thm:3D-decay}). In Appendix \ref{sec:app.C}, we derive the integral formulation for Leray-Hopf weak solutions. Finally, Appendices \ref{sec:app.A} and \ref{sec:app.B} are devoted to the proofs of Propositions \ref{prop:non-uniform} and \ref{prop:non-uniform_B+omega}, respectively.

\section{Eventual smoothness in two dimensions} \label{sec:2D-eventual-regularity}

In this section, we prove Theorem \ref{thm:2D-eventual-regularity}, the eventual regularity of Leray-Hopf weak solutions of \eqref{HMHD} for $d=2$. To this end, we present two propositions as follows.

\begin{proposition} \label{prop:small-data-gwp}
Let $(u_0,B_0)\in L^2_\sigma(\R^2)\cap H^1(\mathbb{R}^2)$. Then, there exists $c_0>0$ depending only on $\|u_0\|_{L^2}$, $\|B_0\|_{L^2}$, $\|B_0+\omega_0\|_{L^2}$, $\nu$, $\eta$ such that if $\|\nabla B_0\|_{L^2}\le c_0$ then there exists a global unique strong solution $(u,B)\in C([0,\infty);H^1(\R^2))$ of \eqref{HMHD} for $d=2$ with $(\nabla u,\nabla B)\in L^2(0,\infty; H^1(\R^2))$.
\end{proposition}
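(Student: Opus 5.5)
The plan is to use local well-posedness of strong $H^1$ solutions for the $2\frac12$D system \eqref{2.5HMHD}, which is known from \cite{BKS, DT22, Tan}, together with an a priori estimate on the $H^1$ level. This estimate should close globally once $\|\nabla B\|_{L^2}$ is small, and a continuity (bootstrap) argument then extends the solution for all time. Uniqueness in the strong class follows from a standard $L^2$ difference estimate, so the real content is the a priori bound.

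\textbf{Step 1: energy and the magneto-vorticity.} The $L^2$ energy identity gives the bound $\mathcal{E}_0$ on $\|u\|_{L^2}^2+\|B\|_{L^2}^2+2\nu\int\|\nabla u\|_{L^2}^2+2\eta\int\|\nabla B\|_{L^2}^2$. Since $\nabla u$ is controlled by $\|\omega\|_{L^2}\le\|B+\omega\|_{L^2}+\|B\|_{L^2}$, it is convenient to control $\nabla u$ through $W:=B+\omega$, which satisfies \eqref{eq of B+omega}, where the Hall term has disappeared. Testing \eqref{eq of B+omega} with $W$ and writing $\nu\Delta\omega+\eta\Delta B=\nu\Delta W+(\eta-\nu)\Delta B$ gives
\[
\tfrac12\tfrac{d}{dt}\|W\|_{L^2}^2+\tfrac{\nu}{2}\|\nabla W\|_{L^2}^2\le \Big|\int (W\cdot\nabla u)\cdot W\Big| + C\tfrac{(\nu-\eta)^2}{\nu}\|\nabla B\|_{L^2}^2 .
\]
In 2D, Ladyzhenskaya's inequality gives $|\int W\cdot\nabla u\cdot W|\le \|\nabla u\|_{L^2}\|W\|_{L^4}^2\le C\|\nabla u\|_{L^2}\|W\|_{L^2}\|\nabla W\|_{L^2}$. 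Absorbing $\|\nabla W\|_{L^2}$ leaves $C\|\nabla u\|_{L^2}^2\|W\|_{L^2}^2$, and $\|\nabla u\|_{L^2}^2\in L^1_t$ by the energy bound. Grönwall then gives $W\in L^\infty_tL^2\cap L^2_t\dot H^1$ globally, with bounds depending on $\|W_0\|_{L^2}$, $\mathcal{E}_0$, $\nu$ and $\eta$; no smallness is needed here. Consequently $\omega$, hence $\nabla u$, is bounded in $L^\infty_tL^2$ and $\nabla^2u\in L^2_tL^2$ (through $\nabla W$ and $\nabla B$, once $\nabla^2 B\in L^2_tL^2$ is shown in Step 2).

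\textbf{Step 2: the $\dot H^1$ estimate for $B$.} This step is the main obstacle, because of the Hall term. Testing \eqref{2.5Hall2} with $-\Delta B$ gives
\[
\tfrac12\tfrac{d}{dt}\|\nabla B\|_{L^2}^2+\eta\|\nabla^2B\|_{L^2}^2\le |I_{\mathrm{Hall}}|+|I_{\mathrm{conv}}|.
\]
For the Hall term, the usual cancellation $\int \nabla\times((\nabla\times B)\times B)\cdot B=0$ at the $L^2$ level yields, after one derivative, only the commutator $\int (\nabla\times(\partial B)\times \partial B)$-type terms. These are bounded by $\|\nabla B\|_{L^4}^2\|\nabla^2B\|_{L^2}\le C\|\nabla B\|_{L^2}\|\nabla^2B\|_{L^2}^2$ in 2D. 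This is exactly scale-critical, so it can be absorbed into the dissipation as long as $C\|\nabla B\|_{L^2}\le\eta/4$. For the transport/stretching term, after integration by parts it is bounded by
\[
C\|\nabla u\|_{L^2}\|\nabla B\|_{L^4}^2+C\|u\|_{L^4}\|\nabla B\|_{L^4}\|\nabla^2 B\|_{L^2}
\le \tfrac{\eta}{4}\|\nabla^2B\|_{L^2}^2+C(\|\nabla u\|_{L^2}^2+\|u\|_{L^2}^2\|\nabla u\|_{L^2}^2)\|\nabla B\|_{L^2}^2 ,
\]
using Ladyzhenskaya and Young. The coefficient of $\|\nabla B\|_{L^2}^2$ is integrable in time, with bound $K$ depending only on $\mathcal{E}_0$, $\nu$ and $\eta$ by the energy inequality.

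\textbf{Step 3: bootstrap.} Let $T^*$ be the supremum of times on which $\|\nabla B\|_{L^2}\le 2c_0e^{K}$, and choose $c_0$ so that $2Cc_0e^K\le\eta/4$. On $[0,T^*)$, Step 2 and Grönwall give $\|\nabla B(t)\|_{L^2}\le c_0e^{K/2}<2c_0e^K$ and $\nabla^2B\in L^2_tL^2$. Hence $T^*=\infty$, and, together with Step 1 and local existence, the solution is global in $C([0,\infty);H^1)$ with $(\nabla u,\nabla B)\in L^2(0,\infty;H^1)$. Note that $c_0$ depends on $\|W_0\|_{L^2}$ only through Step 1, whose bounds enter the $u$-estimates but not the closing condition.
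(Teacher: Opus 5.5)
There is a genuine gap in Step 2, in the bound you claim for the stretching part of $\widetilde\nabla\times(u\times B)$. The transport piece $\int (u\cdot\widetilde\nabla B)\cdot\Delta B\,dx$ and your Hall bound are fine. For the other piece, however, integration by parts only gives
\[
-\int (B\cdot\widetilde\nabla u)\cdot\Delta B\,dx=\sum_{k}\int (\partial_k B\cdot\widetilde\nabla u)\cdot\partial_k B\,dx-\sum_{k}\int \partial_k u\cdot (B\cdot\widetilde\nabla)\partial_k B\,dx ,
\]
and the last integral carries no derivative on $B$. It cannot be turned into a $u\,\nabla B\,\nabla^2 B$ term. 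Moving the derivative off $u$ in the original integral yields $\int u\cdot(B\cdot\widetilde\nabla)\Delta B\,dx$, because the only candidate of the good form is $u\,(\widetilde\nabla\cdot B)\,\Delta B=0$. Your inequality is in fact false: add to $B$ a slowly varying, large-amplitude divergence-free component. This keeps $\|\nabla B\|_{L^4}$ and $\|\nabla^2 B\|_{L^2}$ bounded while the integral grows like the amplitude.

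This is the paper's term $\mathrm{I}_2$, whose natural bound is $\|B\|_{L^4}\|\nabla u\|_{L^4}\|\Delta B\|_{L^2}$, and it does not close at the $\dot H^1$ level alone. Using $\|B\|_{L^4}^2\le C\|B\|_{L^2}\|\nabla B\|_{L^2}$ and Young, you get a contribution $C\mathcal{E}_0^{1/2}\|\nabla u\|_{L^4}^2\|\nabla B\|_{L^2}$. This is only linear in $\|\nabla B\|_{L^2}$, with an $O(1)$ coefficient. Gr\"onwall then gives $\|\nabla B(t)\|_{L^2}\lesssim e^{K}(\|\nabla B_0\|_{L^2}+\mathcal{E}_0^{1/2}\int_0^t\|\nabla u\|_{L^4}^2\,d\tau)$, which is not small, so the bootstrap fails. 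Going through $\|B\|_{L^\infty}$ instead produces a non-small additive forcing in the same way.

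The missing idea is to propagate, alongside $\|\nabla B\|_{L^2}$, a quantity at the scaling of $\|B\|_{L^4}$ that is small whenever $\|\nabla B_0\|_{L^2}$ is. The paper adds the $\dot H^{1/2}$ energy estimate for $B$, using $\|B\|_{L^4}\le C\|\Lambda^{1/2}B\|_{L^2}$ and $\|\Lambda^{1/2}B_0\|_{L^2}^2\le\|B_0\|_{L^2}\|\nabla B_0\|_{L^2}$. The $\dot H^{1/2}$-level transport and Hall terms are handled by commutator estimates; the Hall part is again bounded by $C\|\nabla B\|_{L^2}\|\Lambda^{3/2}B\|_{L^2}^2$. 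One then closes a homogeneous Gr\"onwall inequality for $X^2=\|\Lambda^{1/2}B\|_{L^2}^2+\|\nabla B\|_{L^2}^2$ with coefficient $\|\nabla u\|_{L^2}^2+\|\nabla u\|_{L^4}^2$.

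The time integrability of $\|\nabla u\|_{L^4}^2\le C\|\nabla u\|_{L^2}\|\nabla\omega\|_{L^2}$ is exactly what your Step 1 supplies. Note that $\nabla\omega=\nabla W-\nabla B\in L^2_tL^2$ already follows from Step 1 and the energy inequality, without Step 2. Consequently $\|B_0+\omega_0\|_{L^2}$ must enter the closing condition for $c_0$, contrary to your final remark; its absence from your argument is a symptom of the dropped term. The rest of your plan (Step 1, the Hall estimate, the bootstrap structure, uniqueness) is sound.
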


\begin{proposition} \label{prop:weak-strong}
Let $(u_1,B_1)$ and $(u_2,B_2)$ be two Leray-Hopf weak solutions of \eqref{HMHD} for $d=2$ corresponding to the same initial data $(u_0,B_0)\in L^2_\sigma$. If $\nabla B_2\in L^2(0,T;H^1(\mathbb{R}^2))$ for some $T>0$ (possibly $T=\infty$), then $(u_1,B_1)\equiv (u_2,B_2)$ on $[0,T)$.
\end{proposition}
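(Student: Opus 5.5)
We prove Proposition \ref{prop:weak-strong} by a weak--strong uniqueness argument: write $w:=u_1-u_2$, $b:=B_1-B_2$ and estimate $\|w(t)\|_{L^2}^2+\|b(t)\|_{L^2}^2$ by a Gronwall inequality whose coefficient involves only $\|\nabla B_2\|_{H^1}^2$ and $\|\nabla u_2\|_{L^2}^2$. Since only $(u_1,B_1)$ is weak, we cannot test its equation against itself. We instead use the standard identity
\[
\|w\|^2+\|b\|^2=\big(\|u_1\|^2+\|B_1\|^2\big)+\big(\|u_2\|^2+\|B_2\|^2\big)-2\big(\langle u_1,u_2\rangle+\langle B_1,B_2\rangle\big).
\]
The first bracket is controlled by the energy inequality \eqref{weak_en_ineq} for $(u_1,B_1)$. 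For the second we use the energy \emph{equality} for $(u_2,B_2)$, which holds because $\nabla B_2\in L^2_tH^1_x$ together with $u_2\in L^\infty_tL^2\cap L^2_t\dot H^1$ gives enough integrability in 2D. For the cross term we test the weak formulation of $(u_1,B_1)$ (Appendix \ref{sec:app.C}) with $(u_2,B_2)$, after mollifying in time, and symmetrically test the equation for $(u_2,B_2)$ with $(u_1,B_1)$.

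\textbf{Justifying the cross terms.} We must check that every nonlinear pairing is well-defined, so that the mollification limits pass. The worst terms are the Hall pairings, such as
\[
\int \big((\widetilde\nabla\times B_1)\times B_1\big)\cdot \widetilde\nabla\times B_2 .
\]
In 2D this is bounded by $\|\nabla B_1\|_{L^2}\|B_1\|_{L^4}\|\nabla B_2\|_{L^4}$. By Ladyzhenskaya, $\|B_1\|_{L^4}\lesssim\|B_1\|_{L^2}^{1/2}\|\nabla B_1\|_{L^2}^{1/2}$, and $\|\nabla B_2\|_{L^4}\lesssim \|\nabla B_2\|_{L^2}^{1/2}\|\nabla^2B_2\|_{L^2}^{1/2}$. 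The product is integrable in time by H\"older, using $\nabla B_1\in L^2_tL^2$ and $\nabla B_2\in L^2_tH^1$. The transport and Lorentz pairings are handled the same way. Summing the identities yields
\[
\|w(t)\|^2+\|b(t)\|^2+2\nu\!\int_0^t\|\nabla w\|^2+2\eta\!\int_0^t\|\nabla b\|^2\le 2\int_0^t N(\tau)\,d\tau ,
\]
where, after the cancellations coming from divergence-freeness, $N$ consists only of trilinear terms with at least two difference factors.

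\textbf{The main obstacle: the Hall term in $N$.} This is the term $\int ((\widetilde\nabla\times b)\times B_2)\cdot\widetilde\nabla\times b=0$ plus the genuinely dangerous piece $\int ((\widetilde\nabla\times B_2)\times b)\cdot \widetilde\nabla\times b$. The first vanishes pointwise. The piece $\int((\widetilde\nabla\times b)\times b)\cdot\widetilde\nabla\times B_2$ must be estimated as
\[
\|\nabla b\|_{L^2}\,\|b\|_{L^4}\,\|\nabla B_2\|_{L^4}\lesssim \|\nabla b\|_{L^2}^{3/2}\|b\|_{L^2}^{1/2}\|\nabla B_2\|_{L^2}^{1/2}\|\nabla^2B_2\|_{L^2}^{1/2}.
\]
By Young's inequality this is at most
\[
\tfrac{\eta}{4}\|\nabla b\|_{L^2}^2+C\|b\|_{L^2}^2\|\nabla B_2\|_{L^2}^2\|\nabla^2 B_2\|_{L^2}^2 .
\]
The coefficient $\|\nabla B_2\|_{L^2}^2\|\nabla^2B_2\|_{L^2}^2$ is only $L^1$ in time if $\nabla B_2\in L^\infty_tL^2$. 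We therefore first show this: an $H^1$ energy estimate for $B_2$, using $\nabla B_2\in L^2_tH^1$ and the fact that the Hall term's top-order contribution vanishes, gives $\nabla B_2\in L^\infty(0,T;L^2)$. Alternatively, one can bound the coefficient directly by $\|\nabla B_2\|_{H^1}^2$ times a time-$L^\infty$ quantity.

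The remaining terms are estimated by Ladyzhenskaya and Young:
\[
\int (w\cdot\nabla u_2)\cdot w\lesssim \|w\|\,\|\nabla w\|\,\|\nabla u_2\|, \qquad \int (b\cdot\nabla)B_2\cdot w,\ \ \int (w\cdot\nabla)B_2\cdot b \text{ similarly, with } \|\nabla B_2\|,
\]
after absorbing the dissipation. The outcome is
\[
\frac{d}{dt}\text{-form: }\quad \|w(t)\|^2+\|b(t)\|^2\le \int_0^t g(\tau)\big(\|w\|^2+\|b\|^2\big)\,d\tau,\qquad g\in L^1(0,T'),\ T'<T .
\]
Gronwall then gives $w=b=0$ on $[0,T)$.
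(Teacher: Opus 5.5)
You have the same overall strategy as the paper: relative energy, cross terms from testing each equation with the other solution, then Gronwall on $\|w\|_{L^2}^2+\|b\|_{L^2}^2$. The gap is in how you split the Hall term. You always put one derivative on the weak factor and one on $B_2$, i.e.\ you use $\|\nabla B_2\|_{L^4}\lesssim\|\nabla B_2\|_{L^2}^{1/2}\|\nabla^2 B_2\|_{L^2}^{1/2}$. Under the actual hypothesis $\nabla B_2\in L^2(0,T;H^1)$ this quantity is only in $L^2_t$.

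This breaks two steps.
\begin{itemize}
\item \textbf{Cross term.} In $\int((\widetilde\nabla\times B_1)\times B_1)\cdot\widetilde\nabla\times B_2$ the time exponents are $\tfrac12+\tfrac14+\tfrac12=\tfrac54>1$. So H\"older does \emph{not} give integrability, contrary to what you claim.
\item \textbf{Gronwall step.} The same splitting gives the weight $\|\nabla B_2\|_{L^2}^2\|\nabla^2B_2\|_{L^2}^2$. This is not in $L^1(0,T')$ unless $\nabla B_2\in L^\infty(0,T';L^2)$, which is not assumed.
\end{itemize}

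Your proposed ``$H^1$ energy estimate for $B_2$'' does not supply that bound, for three reasons.
\begin{itemize}
\item It needs $\nabla B_0\in L^2$ as a starting value; only $B_0\in L^2$ is given, and the weight must be integrable down to $t=0$.
\item The coupling term $\int\Delta B_2\cdot(B_2\cdot\widetilde\nabla u_2)$ requires regularity of $u_2$ beyond the Leray--Hopf class. Compare Proposition~\ref{prop:small-data-gwp}: there the paper needs $\|\nabla u\|_{L^4}^2\in L^1_t$, which comes from the $B+\omega$ bound and hence from $u_0\in H^1$.
\item Making the estimate rigorous for a weak solution produces error terms controlled by $\|\nabla B_2\|_{L^2}\|\nabla^2B_2\|_{L^2}^2$. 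Its integrability is exactly what is in question, so the argument is circular.
\end{itemize}

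The missing idea is to put \emph{both} derivatives on $B_2$. Since $\dv b=0$, we have $(\widetilde\nabla\times b)\times b=\widetilde\nabla\cdot(b\otimes b)-\widetilde\nabla\tfrac{|b|^2}{2}$. Using also $\widetilde\nabla\cdot(\widetilde\nabla\times B_2)=0$,
\[
\int\big((\widetilde\nabla\times b)\times b\big)\cdot\widetilde\nabla\times B_2\,dx=-\int\widetilde\nabla(\widetilde\nabla\times B_2):(b\otimes b)\,dx\le C\|\nabla^2B_2\|_{L^2}\|b\|_{L^2}\|\nabla b\|_{L^2}.
\]
After Young's inequality the Gronwall weight is $\eta^{-1}\|\nabla^2B_2\|_{L^2}^2$, which lies in $L^1(0,T)$ directly from the hypothesis; this is what the paper does for $\text{II}_5$.

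The same identity fixes the cross term. It pairs $B_1\otimes B_1$, which is in $L^2_tL^2$ by Ladyzhenskaya, against $\nabla^2B_2\in L^2_tL^2$. This is the paper's remark that the Hall term of $B_1$ lies in $L^2_t\dot H^{-2}$ while $B_2\in L^2_t\dot H^2$.

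With this change your argument closes. You also do not need the energy equality for $(u_2,B_2)$: both solutions are Leray--Hopf, and the two energy inequalities suffice.
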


Proposition \ref{prop:small-data-gwp} represents the small data global well-posedness of strong solutions to \eqref{HMHD} for $d=2$, which updates the result in \cite{Tan} based on the uniform bound obtained in \cite{BKS} (see \eqref{bound_omega}). Proposition \ref{prop:weak-strong} represents the weak-strong uniqueness for \eqref{HMHD} with $d=2$. The weak-strong uniqueness has been studied in some literature; \cite{BK} for $d=2$, \cite{Chae Degond Liu} for $d=3$. However, we prove it for our specific use to prove Theorem \ref{thm:2D-eventual-regularity}.

\begin{proof}[Proof of Theorem \ref{thm:2D-eventual-regularity}]
Let $(u,B)$ be a global Leray-Hopf weak solution. Then, from \eqref{energy ineq}, we have that for all $t\geq0$
\[
\|u(t)\|_{L^2}^2+\|B(t)\|_{L^2}^2+2\nu\int^t_0\|\nabla u(\tau)\|_{L^2}^2\,d\tau+2\eta\int^t_0\|\nabla B(\tau)\|_{L^2}^2\,d\tau\leq \|u_0\|_{L^2}^2+\|B_0\|_{L^2}^2.
\]
Hence, there exists $T_0>0$ satisfying
\[
\|u(T_0)\|_{L^2}^2+\|B(T_0)\|_{L^2}^2\leq \|u_0\|_{L^2}^2+\|B_0\|_{L^2}^2,\quad \|\nabla B(T_0)\|_{L^2}^2\leq c_0
\]
for the constant $c_0>0$ in Proposition \ref{prop:small-data-gwp}. Then, there exists a global strong solution $(\widebar{u},\widebar{B})\in C([T_0,\infty);H^1(\mathbb{R}^2))$ with $(\nabla\widebar{u},\nabla\widebar{B})\in L^2(T_0,\infty;H^1(\mathbb{R}^2))$ starting from $(u(T_0),B(T_0))$ at $t=T_0$. We deduce from Proposition \ref{prop:weak-strong} that $(u,B)\equiv(\widebar{u},\widebar{B})$ on $[T_0,\infty)$. This completes the proof.
\end{proof}

Before proving Proposition \ref{prop:small-data-gwp}, we first recall the uniform bounds for \eqref{HMHD} for $d=2$ obtained in \cite[Theorem 1.2]{BKS}:
\[
\|(B+\omega)(t)\|_{L^2}^2+\nu\int^t_0\|\nabla(B+\omega)(\tau)\|_{L^2}^2\,d\tau\leq \left(\|B_0+\omega_0\|_{L^2}^2+C\mathcal{E}_0 \frac{(\nu-\eta)^2}{\nu\eta}\right) e^{C\nu^{-2}\mathcal{E}_0}=:\mathcal{E}_1,
\]
where $\mathcal{E}_0$ is defined in \eqref{weak_en_ineq} and
\begin{equation}\label{bound_omega}
\|\omega(t)\|_{L^2}^2+\min{(\nu,\eta)}\int^t_0\|\nabla\omega(\tau)\|_{L^2}^2\,d\tau\leq \mathcal{E}_0+\mathcal{E}_1=:\mathcal{E}_2
\end{equation}
In the help of the regularity gain of $u$ in \eqref{bound_omega}, we deal with \eqref{HMHD} as like \eqref{EMHD} with mild forcing terms.

\subsection{Proof of Proposition \ref{prop:small-data-gwp}}

Since the local well-posedness of strong solutions in $H^1(\R^2)$ was shown in \cite{BK, Tan}, it suffices to check \textit{a priori estimates} implying the global continuation of the existence time. As we mentioned before, the solutions satisfy the energy inequality \eqref{energy ineq} (energy identity, in fact) and the uniform bound with respect to $\omega$ \eqref{bound_omega}.

We write $J:= \widetilde{\nabla}\times B$ and obtain
\[\begin{aligned}
\frac{1}{2}\frac{d}{dt}\|\nabla B\|_{L^2}^2+\eta\|\Delta B\|_{L^2}^2&=\int_{\mathbb{R}^2} \Delta B\cdot (u\cdot\widetilde{\nabla} B-B\cdot\widetilde{\nabla} u)\,dx+\int_{\mathbb{R}^2}\Delta B\cdot\widetilde{\nabla}\times(J\times B)\,dx \\
&=: \text{I}_1+\text{I}_2+\text{I}_3.
\end{aligned}\]
We separately estimate $\text{I}_1, \text{I}_2, \text{I}_3$ as
\[\begin{aligned}
\text{I}_1&=-\sum_{k=1}^2 \int_{\mathbb{R}^2}\partial_k B\cdot(\partial_k u\cdot\widetilde{\nabla} B)\,dx\leq \|\nabla u\|_{L^2}\|\nabla B\|_{L^4}^2\leq C\|\nabla u\|_{L^2}\|\nabla B\|_{L^2}\|\Delta B\|_{L^2},\\
\text{I}_{2}&\leq \|\Delta B\|_{L^2}\|B\|_{L^4}\|\nabla u\|_{L^4}\leq C\|\nabla u\|_{L^4}\|\Lambda^{\frac{1}{2}}B\|_{L^2}\|\Delta B\|_{L^2},\\
\text{I}_3&=-\sum_{k=1}^2\int_{\mathbb{R}^2} \partial_k J\cdot (J\times\partial_k B)\,dx\leq C\|\nabla B\|_{L^4}^2\|\Delta B\|_{L^2}\leq C\|\nabla B\|_{L^2}\|\Delta B\|_{L^2}^2.
\end{aligned}\]
Due to $\text{I}_2$, we need to control $\|\Lambda^{\frac{1}{2}}B\|_{L^2}$. So we also estimate

\[
\begin{aligned}
\frac{1}{2}\frac{d}{dt}\|\Lambda^{\frac{1}{2}}B\|_{L^2}^2+\eta\|\Lambda^{\frac{3}{2}}B\|_{L^2}^2&=-\int_{\mathbb{R}^2}\Lambda^{\frac12} B\cdot \Lambda^{\frac12}(u\cdot\widetilde{\nabla} B - B \cdot \widetilde{\nabla} u)\,dx\\
&\quad-\int_{\mathbb{R}^2}\Lambda^{\frac12} B\cdot \Lambda^{\frac12}\lt[\widetilde{\nabla}\times(J\times B)\rt]\,dx\\
&=:\text{I}_4+\text{I}_5+\text{I}_6.
\end{aligned}
\]
For $\text{I}_4$, we have
\[\begin{aligned}
\text{I}_4&=-\int_{\mathbb{R}^2}\Lambda^{\frac{1}{2}}B\cdot\left[\Lambda^{\frac{1}{2}}(u\cdot\widetilde{\nabla} B)-u\cdot\widetilde{\nabla}\Lambda^{\frac{1}{2}}B\right]\,dx\\
&\leq \|\Lambda^{\frac{1}{2}}B\|_{L^2}\|\Lambda^{\frac{1}{2}}u\|_{L^4}\|\nabla B\|_{L^4} \leq C\|\nabla u\|_{L^2}\|\Lambda^{\frac{1}{2}}B\|_{L^2}\|\Lambda^{\frac{3}{2}}B\|_{L^2},
\end{aligned}
\]
and for $\text{I}_5$,

\[\begin{aligned}
\text{I}_5&= -\int_{\R^2}\Lambda B \cdot (B\cdot\widetilde{\nabla} u)\,dx \leq \|\Lambda B\|_{L^4}\|B\|_{L^4}\|\nabla u\|_{L^2}\leq C\|\nabla u\|_{L^2}\|\Lambda^{\frac{1}{2}}B\|_{L^2}\|\Lambda^{\frac{3}{2}}B\|_{L^2},
\end{aligned}\]
Next, for $\text{I}_6$, 

\[\begin{aligned}
\text{I}_6&=-\int_{\mathbb{R}^2}\Lambda^{\frac{1}{2}}J\cdot\left[\Lambda^{\frac{1}{2}}(J\times B)-(\Lambda^{\frac{1}{2}}J)\times B\right]\,dx\\
&\leq \|\Lambda^{\frac{1}{2}}J\|_{L^2}\|J\|_{L^4}\|\Lambda^{\frac{1}{2}}B\|_{L^4}\leq C\|\nabla B\|_{L^2}\|\Lambda^{\frac{3}{2}}B\|_{L^2}^2.
\end{aligned}\]

Now, we set
\[
X^2(t):=\|\Lambda^{\frac{1}{2}}B(t)\|_{L^2}^2+\|\nabla B(t)\|_{L^2}^2,\quad \mbox{and} \quad Y^2(t):=\|\Lambda^{\frac{3}{2}}B(t)\|_{L^2}^2+\|\Delta B(t)\|_{L^2}^2,
\]
and collect the above estimates to obtain
\[\begin{aligned}
\frac{d}{dt}X^2+2\eta Y^2 &\leq C\left(\|\nabla u\|_{L^2}+\|\nabla u\|_{L^4}\right)XY+C\|\nabla B\|_{L^2}Y^2 \\
&\leq \left(\frac{\eta}{2}+C_0\|\nabla B\|_{L^2}\right)Y^2+\widetilde{C}_0\eta^{-1}\left(\|\nabla u\|_{L^2}^2+\|\nabla u\|_{L^4}^2\right)X^2
\end{aligned}\]
for some $C_0,\widetilde{C}_0>0$. Suppose that $\displaystyle\sup_{0\leq t\leq t_0}\|\nabla B(t)\|_{L^2}\leq \frac{\eta}{2C_0}$. Then we use  the Gr\"onwall's inequality to attain
\[
X^2(t)+\eta\int^t_0 Y^2(\tau)\,d\tau\leq X^2(0)\exp\left(\widetilde{C}_0\eta^{-1}\int^t_0(\|\nabla u(\tau)\|_{L^2}^2+\|\nabla u(\tau)\|_{L^4}^2)\,d\tau\right)
\]
From \eqref{bound_omega} we deduce that if 
\[
X^2(0)\exp\left(\frac{2\widetilde{C}_0\mathcal{E}_2}{\eta\min{(\nu,\eta)}}\right)\leq \frac{\eta^2}{4C_0^2}
\]
$X^2(t)+\eta\int^t_0 Y^2(\tau)\,d\tau$ has the uniform-in-time bound, and so the global well-posedness follows by the blow up criteria for the 2D Hall-MHD system (see \cite{BKS, Chae Lee}). Finally, we use \eqref{energy ineq} and the Sobolev interpolation to complete the proof of Proposition \ref{prop:small-data-gwp}.

\subsection{Proof of Proposition \ref{prop:weak-strong}}

Let $(u_1,B_1)$ and $(u_2,B_2)$ be two such weak solutions and define $(\widetilde{u},\widetilde{B})=(u_1-u_2,B_1-B_2)$. Then we employ the relative energy method (see e.g. \cite[Section 4.4]{BV_text}). From the energy inequality, we deduce that
\[\begin{aligned}
& \frac{1}{2}\|\widetilde{u}(t)\|_{L^2}^2+\frac{1}{2}\|\widetilde{B}(t)\|_{L^2}^2+\nu\int^t_0\|\nabla\widetilde{u}(\tau)\|_{L^2}^2\,d\tau+\eta\int^t_0\|\nabla \widetilde{B}(\tau)\|_{L^2}^2\,d\tau \\
& = \sum_{i=1,2}\left[\frac{1}{2}\|u_i(t)\|_{L^2}^2+\frac{1}{2}\|{B}_i(t)\|_{L^2}^2+\nu\int^t_0\|\nabla u_i(\tau)\|_{L^2}^2\,d\tau+\eta\int^t_0\|\nabla {B}_i(\tau)\|_{L^2}^2\,d\tau\right] \\
&-\int_{\mathbb{R}^2} u_1(t,x)\cdot u_2(t,x)\,dx -\int_{\mathbb{R}^2} B_1(t,x)\cdot B_2(t,x)\,dx \\
& -2\nu\int^t_0\int_{\mathbb{R}^2}\nabla u_1(\tau,x):\nabla u_2(\tau,x)\,dxd\tau -2\eta\int^t_0\int_{\mathbb{R}^2}\nabla B_1(\tau,x):\nabla B_2(\tau,x)\,dxd\tau \\
&\leq \|u_0\|_{L^2}^2 -\int_{\mathbb{R}^2} u_1(t,x)\cdot u_2(t,x)\,dx -2\nu\int^t_0\int_{\mathbb{R}^2}\nabla u_1(\tau,x):\nabla u_2(\tau,x)\,dxd\tau \\
&+ \|B_0\|_{L^2}^2 -\int_{\mathbb{R}^2} B_1(t,x)\cdot B_2(t,x)\,dx -2\eta\int^t_0\int_{\mathbb{R}^2}\nabla B_1(\tau,x):\nabla B_2(\tau,x)\,dxd\tau.
\end{aligned}\]
Now,  the fundamental theorem of calculus gives
\[\begin{aligned}
\|u_0\|_{L^2}^2 &-\int_{\mathbb{R}^2} u_1(t,x)\cdot u_2(t,x)\,dx=-\int^t_0\frac{d}{d\tau}\left(\int_{\mathbb{R}^2}u_1(\tau,x)\cdot u_2(\tau,x)\,dx\right)\,d\tau \\
&=-\int^t_0\int_{\mathbb{R}^2}\lt(\partial_\tau u_1(\tau,x)\cdot u_2(\tau,x)+\partial_\tau u_2(\tau,x)\cdot u_1(\tau,x)\rt)\,dxd\tau \\
&=2\nu\int^t_0\int_{\mathbb{R}^2}\nabla u_1:\nabla u_2\,dxd\tau +\text{II}_1+\text{II}_2
\end{aligned}\]
where

\[\begin{aligned}
\text{II}_1&=\int^t_0\int_{\mathbb{R}^2} \lt[ (u_1\cdot\widetilde{\nabla} u_1)\cdot u_2+(u_2\cdot\widetilde{\nabla} u_2)\cdot u_1\rt]\,dxd\tau, \\
\text{II}_2&=-\int^t_0\int_{\mathbb{R}^2} \lt[(B_1\cdot\widetilde{\nabla} B_1)\cdot u_2+(B_2\cdot\widetilde{\nabla} B_2)\cdot u_1\rt]\,dxd\tau.
\end{aligned}\]
Here, we tested the equations of $u_1$ and $u_2$ by $u_2$ and $u_1$ since $u_i\in L^2([0,\infty);\dot{H}^1(\mathbb{R}^2))$ solve the equations in $L^2_{loc}([0,\infty);\dot{H}^{-1}(\mathbb{R}^2))$ from the energy inequality. Similarly, we also obtain
\[\begin{aligned}
\|B_0\|_{L^2}^2 &-\int_{\mathbb{R}^2} B_1(t,x)\cdot B_2(t,x)\,dx\\
&=-\int^t_0\frac{d}{d\tau}\left(\int_{\mathbb{R}^2}B_1(\tau,x)\cdot B_2(\tau,x)\,dx\right)\,d\tau \\
&=-\int^t_0\int_{\mathbb{R}^2}\lt(\partial_\tau B_1(\tau,x)\cdot B_2(\tau,x)+\partial_\tau B_2(\tau,x)\cdot B_1(\tau,x)\rt)\,dxd\tau \\
&=2\eta\int^t_0\int_{\mathbb{R}^2}\nabla B_1:\nabla B_2\,dxd\tau +\text{II}_3+\text{II}_4+\text{II}_5
\end{aligned}\]
where
\[\begin{aligned}
\text{II}_3&=\int^t_0\int_{\mathbb{R}^2}\lt[ (u_1\cdot\widetilde{\nabla} B_1)\cdot B_2+(u_2\cdot\widetilde{\nabla} B_2)\cdot B_1\rt]\,dxd\tau,\\
\text{II}_4&=-\int^t_0\int_{\mathbb{R}^2}\lt[ (B_1\cdot\widetilde{\nabla} u_1)\cdot B_2+(B_2\cdot\widetilde{\nabla} u_2)\cdot B_1\rt]\,dxd\tau,\\
\text{II}_5&=\int^t_0\int_{\mathbb{R}^2}\lt[((\widetilde{\nabla}\times B_1)\times B_1)\cdot(\widetilde{\nabla}\times B_2)+((\widetilde{\nabla}\times B_2)\times B_2)\cdot(\widetilde{\nabla}\times B_1)\rt]\,dxd\tau.
\end{aligned}\]
Note that although $\widetilde{\nabla}\times((\widetilde{\nabla}\times B_1)\times B_1)$ is in $L^2_{loc}([0,\infty);\dot{H}^{-2}(\mathbb{R}^2))$, $\text{II}_5$ is valid since $B_2\in L^2([0,T);\dot{H}^2(\mathbb{R}^2))$. We collect all the estimates to arrive at
\[
\frac{1}{2}\|\widetilde{u}(t)\|_{L^2}^2+\frac{1}{2}\|\widetilde{B}(t)\|_{L^2}^2+\nu\int^t_0\|\nabla\widetilde{u}(\tau)\|_{L^2}^2\,d\tau+\eta\int^t_0\|\nabla \widetilde{B}(\tau)\|_{L^2}^2\,d\tau\leq \text{II}_1+\text{II}_2+\text{II}_3+\text{II}_4+\text{II}_5.
\]
For $\text{II}_1$,
\[
\text{II}_1=-\int^t_0\int_{\mathbb{R}^2}(u_1\cdot\widetilde{\nabla} u_1-u_2\cdot\widetilde{\nabla} u_2)\cdot\widetilde{u}\,dxd\tau=-\int^t_0\int_{\mathbb{R}^2}(\widetilde{u}\cdot\widetilde{\nabla} u_2)\cdot\widetilde{u}\,dxd\tau,
\]
and similarly we obtain 
\[
\text{II}_3=-\int^t_0\int_{\mathbb{R}^2}(\widetilde{u}\cdot\widetilde{\nabla} B_2)\cdot\widetilde{B}\,dxd\tau.
\]
For $\text{II}_2$ and $\text{II}_4$, we use $\langle B\cdot\widetilde{\nabla} F,G\rangle+\langle B\cdot\widetilde{\nabla} G,F\rangle=0$ to get
\[\begin{aligned}
\text{II}_2+\text{II}_4 &=\int^t_0\int_{\mathbb{R}^2}\lt[(B_1\cdot\widetilde{\nabla} B_1-B_2\cdot\widetilde{\nabla} B_2)\cdot\widetilde{u}+(B_1\cdot\widetilde{\nabla} u_1-B_2\cdot\widetilde{\nabla} u_2)\cdot\widetilde{B}\rt]\,dxd\tau \\
&=\int^t_0\int_{\mathbb{R}^2}\lt[(\widetilde{B}\cdot\widetilde{\nabla} B_2)\cdot\widetilde{u}+(\widetilde{B}\cdot\widetilde{\nabla} u_2)\cdot\widetilde{B}\rt]\,dxd\tau
\end{aligned}\]
Hence, we compute
\[\begin{aligned}
\text{II}_1+\text{II}_2+\text{II}_3+\text{II}_4 &\leq C\int_0^t \left(\|\nabla u_2\|_{L^2}+\|\nabla B_2\|_{L^2}\right)\left(\|\widetilde{u}\|_{L^4}^2+\|\widetilde{B}\|_{L^4}^2\right)\,d\tau\\
&\leq \frac{\nu}{2}\int_0^t \|\nabla\widetilde{u}\|_{L^2}^2\,d\tau+\frac{\eta}{4}\int_0^t \|\nabla\widetilde{B}\|_{L^2}^2\,d\tau\\
&\quad +\frac{C}{\min{(\nu,\eta)}}\int_0^t \left(\|\nabla u_2\|_{L^2}^2+\|\nabla B_2\|_{L^2}^2\right)\left(\|\widetilde{u}\|_{L^2}^2+\|\widetilde{B}\|_{L^2}^2\right)\,d\tau.
\end{aligned}\]
For $\text{II}_5$, 
\[\begin{aligned}
\text{II}_5 &=-\int^t_0\int_{\mathbb{R}^2}\lt[\left((\widetilde{\nabla}\times B_1)\times B_1-(\widetilde{\nabla}\times B_2)\times B_2\right)\cdot(\widetilde{\nabla}\times\widetilde{B})\rt]\,dxd\tau \\
&= -\int^t_0\int_{\mathbb{R}^2}((\widetilde{\nabla}\times B_2)\times\widetilde{B})\cdot(\widetilde{\nabla}\times \widetilde{B})\,dxd\tau\\
&=\int^t_0\int_{\mathbb{R}^2}(\widetilde{\nabla}\times B_2)\cdot((\widetilde{\nabla}\times \widetilde{B})\times\widetilde{B})\,dxd\tau \\
&=-\int^t_0\int_{\mathbb{R}^2}\widetilde{\nabla}(\widetilde{\nabla}\times B_2):(\widetilde{B}\otimes\widetilde{B})\,dxd\tau\\
&\leq C\int_0^t \|\Delta B_2\|_{L^2}\|\widetilde{B}\|_{L^4}^2\,d\tau\\
&\leq \frac{\eta}{4}\int_0^t \|\nabla\widetilde{B}\|_{L^2}^2\,d\tau+C\eta^{-1}\int_0^t \|\Delta B_2\|_{L^2}^2\|\widetilde{B}\|_{L^2}^2\,d\tau
\end{aligned}\]
Therefore, we have
\[\begin{aligned}
\|\widetilde{u}(t)\|_{L^2}^2+\|\widetilde{B}(t)\|_{L^2}^2 &+\nu\int^t_0\|\nabla\widetilde{u}(\tau)\|_{L^2}^2\,d\tau+\eta\int^t_0\|\nabla \widetilde{B}(\tau)\|_{L^2}^2\,d\tau \\
&\leq \frac{C}{\min{(\nu,\eta)}}\int_0^t \left(\|\nabla u_2\|_{L^2}^2+\|\nabla B_2\|_{L^2}^2\right)\left(\|\widetilde{u}\|_{L^2}^2+\|\widetilde{B}\|_{L^2}^2\right)\,d\tau\\
&\quad +C\eta^{-1}\int_0^t \|\Delta B_2\|_{L^2}^2\|\widetilde{B}\|_{L^2}^2\,d\tau,
\end{aligned}\]
and the weak-strong uniqueness follows from Gr\"onwall's lemma.

\section{Eventual regularity of $B+\omega$ in three dimensions}\label{sec:3D_weak_sol}

In this section, we prove Theorem \ref{thm:3D_weak_sol}, the existence of Leray-Hopf weak solutions with eventual regularity of $B+\omega$. To this end, we present three propositions as follows:

\begin{proposition} \label{prop:existence_B+omega}
\begin{enumerate}
\item Let $\nu,\eta>0$. If $(u_0,B_0,\omega_0)\in L^2_\sigma(\R^3)$ and 
\[
C_0:=\lt(\|B_0+\omega_0\|_{L^2}^2+C\mathcal{E}_0\frac{(\nu-\eta)^2}{\nu\eta}\rt)\exp{\lt(\frac{C}{\nu^4}(\mathcal{E}_0+\mathcal{E}_0^2)\rt)}<1,\quad \mathcal{E}_0:=\|u_0\|_{L^2}^2+\|B_0\|_{L^2}^2,
\]
then there exists a global Leray-Hopf weak solution $(u,B)$ of \eqref{HMHD} satisfying 
\begin{equation}\label{3D_B+omega_bound}
\|(B+\omega)(t)\|_{L^2}^2+\nu\int^t_0\|\nabla(B+\omega)(\tau)\|_{L^2}^2\,d\tau\le C_0
\end{equation}
for all $t\ge0$.
\item Let $\nu=\eta>0$. If $(u_0,B_0)\in L^2_\sigma(\R^3)$, $B_0+\omega_0\in H^k(\R^3)$ for $k=1, 2$ and
\[
C_0=\|B_0+\omega_0\|_{L^2}^2\exp{\lt(\frac{C}{\nu^4}(\mathcal{E}_0+\mathcal{E}_0^2)\rt)}<1,
\]
then there exists a global Leray-Hopf weak solution $(u,B)$ of \eqref{HMHD} satisfying 
\begin{equation}\label{3D_B+omega_bound2}
\|(B+\omega)(t)\|_{H^k}^2 +\nu\int^t_0\|\nabla(B+\omega)(\tau)\|_{H^k}^2\,d\tau\le C(\nu,\mathcal{E}_0, \|B_0+\omega\|_{H^k})
\end{equation}
for all $t\ge 0$.
\end{enumerate}
\end{proposition}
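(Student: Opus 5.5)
We construct the solution by approximation and derive the bound \eqref{3D_B+omega_bound} uniformly for the approximations, then pass to the limit. Concretely, we regularize \eqref{HMHD}, for instance by a Galerkin/Friedrichs truncation $P_n$ onto frequencies $|\xi|\le n$, or by adding a hyperdissipation $-\epsilon\Delta^2$ to both equations. This gives smooth global approximate solutions $(u_n,B_n)$ satisfying the energy identity, so $\|u_n(t)\|_{L^2}^2+\|B_n(t)\|_{L^2}^2\le\mathcal{E}_0$ together with the dissipation bound. The regularization should commute with curl, so that $W_n:=B_n+\omega_n$ satisfies the analogue of \eqref{eq of B+omega}, possibly with an extra dissipative term of favorable sign. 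The limiting procedure is the standard one of \cite{Chae Degond Liu}: Aubin--Lions compactness, and lower semicontinuity for both the energy inequality and \eqref{3D_B+omega_bound}. It yields a Leray--Hopf weak solution with $B+\omega\in L^\infty_tL^2_x\cap L^2_t\dot H^1_x$ bounded by $C_0$.

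The core is the a priori estimate for $W=B+\omega$. Write $\nu\Delta\omega+\eta\Delta B=\nu\Delta W+(\eta-\nu)\Delta B$ and test \eqref{eq of B+omega} with $W$. This gives
\[
\frac12\frac{d}{dt}\|W\|_{L^2}^2+\nu\|\nabla W\|_{L^2}^2=\int (W\cdot\nabla u)\cdot W\,dx+(\nu-\eta)\int\nabla B:\nabla W\,dx .
\]
The last term is at most $\frac{\nu}{4}\|\nabla W\|_{L^2}^2+\frac{(\nu-\eta)^2}{\nu}\|\nabla B\|_{L^2}^2$. Its time integral is bounded by $C\mathcal{E}_0(\nu-\eta)^2/(\nu\eta)$ by the energy inequality; this is the origin of that term in $C_0$.

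The stretching term is the main obstacle in 3D. We do not have $u\in L^2_t\mathrm{BMO}_x$. We write $\omega=W-B$, so that $\nabla u$ is controlled by $\|W\|_{L^2}+\|B\|_{L^2}$-type quantities via Biot--Savart. This gives
\[
\Big|\int (W\cdot\nabla u)\cdot W\Big|\le \|W\|_{L^3}^2\|\nabla u\|_{L^3}\lesssim \|W\|_{L^2}\|\nabla W\|_{L^2}\,\|\omega\|_{L^2}^{1/2}\|\nabla\omega\|_{L^2}^{1/2}.
\]
Inserting $\|\omega\|\le\|W\|+\|B\|$ and $\|\nabla\omega\|\le\|\nabla W\|+\|\nabla B\|$ produces terms of two kinds. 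The first kind, $\|W\|^{3/2}\|\nabla W\|^{3/2}$, is superlinear. The second kind, such as $\|W\|\,\|\nabla W\|^{3/2}\|B\|^{1/2}$ and $\|W\|\,\|\nabla W\|\,\|B\|^{1/2}\|\nabla B\|^{1/2}$, is handled by Young's inequality, giving $\frac{\nu}{4}\|\nabla W\|^2$ plus $\|W\|^2$ times a weight. The weight is of the form $\nu^{-3}\|B\|_{L^2}^2+\nu^{-1}\|B\|\|\nabla B\|$, and after integration in time against the energy it yields the factor $\exp(C\nu^{-4}(\mathcal{E}_0+\mathcal{E}_0^2))$ in $C_0$. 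The superlinear term $\|W\|^{3/2}\|\nabla W\|^{3/2}\le \frac{\nu}{4}\|\nabla W\|^2+C\nu^{-3}\|W\|^6$ is the critical one. We handle it by a continuity/bootstrap argument. As long as $\|W\|_{L^2}^2\le 1$ (after normalizing constants, which is where the smallness $C_0<1$ is used), we absorb it as $\|W\|^{3/2}\|\nabla W\|^{3/2}\le \|W\|_{L^2}^{1/2}\|W\|\,\|\nabla W\|^{3/2}$, treating the small factor accordingly. Alternatively we absorb directly into dissipation when $\|W\|\le c\nu^{2}$. Grönwall then gives $\|W(t)\|^2+\nu\int_0^t\|\nabla W\|^2\le C_0<1$, which closes the bootstrap.

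For part (2), with $\nu=\eta$ the linear forcing term vanishes and $W$ satisfies a pure transport--stretching heat equation. We apply $\nabla^k$ for $k=1,2$ and test. Commutator terms $[\nabla^k,u\cdot\nabla]W$ and $\nabla^k(W\cdot\nabla u)$ are estimated by Kato--Ponce. We use $\nabla u$ written through $W-B$, the already established bound \eqref{3D_B+omega_bound} for $W$, and the energy bounds. Where $\nabla^k B$ appears at high order, we interpolate it against $\nabla^{k}W$ and the energy dissipation, so that Grönwall closes with constants depending on $\nu,\mathcal{E}_0,\|B_0+\omega_0\|_{H^k}$. These estimates pass to the limit by lower semicontinuity as before.
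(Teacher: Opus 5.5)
There is a genuine gap in the key a priori estimate of part (1). To get a bound by $C_0$ that is uniform in time, every Gronwall weight must be integrable on $(0,\infty)$. The energy inequality only gives $\sup_t(\|u\|_{L^2}^2+\|B\|_{L^2}^2)\le\mathcal{E}_0$ and $\int_0^\infty(\nu\|\nabla u\|_{L^2}^2+\eta\|\nabla B\|_{L^2}^2)\,dt\le\mathcal{E}_0/2$. It does not control $\int_0^\infty\|B\|_{L^2}^2\,dt$; already for the heat flow this equals $\frac12\|B_0\|_{\dot H^{-1}}^2$, which can be infinite for $L^2$ data. Nor does it control $\int_0^\infty\|B\|_{L^2}\|\nabla B\|_{L^2}\,dt$.

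By inserting $\|\omega\|_{L^2}\le\|W\|_{L^2}+\|B\|_{L^2}$ you throw away precisely the time-integrable factor $\|\omega\|_{L^2}=\|\nabla u\|_{L^2}$. Your resulting weights $\nu^{-3}\|B\|_{L^2}^2+\nu^{-1}\|B\|_{L^2}\|\nabla B\|_{L^2}$ are not in $L^1_t$. Gronwall then yields growth like $e^{C\nu^{-3}\mathcal{E}_0 t}$, not the factor $\exp(C\nu^{-4}(\mathcal{E}_0+\mathcal{E}_0^2))$. The cubic term $\|W\|_{L^2}^{3/2}\|\nabla W\|_{L^2}^{3/2}$ is worse, because it is subquadratic in $\|\nabla W\|_{L^2}$. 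No smallness of $\|W\|_{L^2}$, including $\|W\|_{L^2}\le c\nu^2$, lets you absorb it into $\nu\|\nabla W\|_{L^2}^2$: on $\R^3$ the ratio $\|\nabla W\|_{L^2}/\|W\|_{L^2}$ can be arbitrarily small. Young's inequality leaves $C\nu^{-3}\|W\|_{L^2}^6$, again with no integrable weight. So your bootstrap does not close globally.

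The missing idea is to keep one factor $\|\nabla u\|_{L^2}^2$ in every weight. Estimate
\[
\Big|\int(W\cdot\nabla u)\cdot W\,dx\Big|\le\|\nabla u\|_{L^2}\|W\|_{L^4}^2\le C\|\nabla u\|_{L^2}\|W\|_{L^2}^{1/2}\|\nabla W\|_{L^2}^{3/2}\le\frac{\nu}{4}\|\nabla W\|_{L^2}^2+\frac{C}{\nu^3}\|\nabla u\|_{L^2}^4\|W\|_{L^2}^2 .
\]
Only then apply $\|\nabla u\|_{L^2}^2=\|\omega\|_{L^2}^2\le 2(\|W\|_{L^2}^2+\|B\|_{L^2}^2)$, and only to \emph{one} of the two factors $\|\nabla u\|_{L^2}^2$. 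As long as $\|W\|_{L^2}^2<1$, the weight becomes $C\nu^{-3}(1+\|B\|_{L^2}^2)\|\nabla u\|_{L^2}^2$, and its time integral is at most $C\nu^{-4}(\mathcal{E}_0+\mathcal{E}_0^2)$. Combined with your correct treatment of the $(\nu-\eta)$ term, Gronwall gives $\|W(t)\|_{L^2}^2+\nu\int_0^t\|\nabla W\|_{L^2}^2\,d\tau\le C_0<1$, which closes the continuity argument. This reproduces the weight $C\nu^{-3}(1+\|B\|_{L^2}^2)\|\nabla u\|_{L^2}^2$ in the paper's estimate. In your $L^3$ route, the analogous repair is to keep $\|\omega\|_{L^2}^{1/2}=\|\nabla u\|_{L^2}^{1/2}$ unsplit, though the cross term with $\|\nabla B\|_{L^2}^{1/2}$ then brings an $\eta$-dependent constant not of the stated form.

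Part (2) has a related problem. For $k=2$, Kato--Ponce on $\nabla^2(W\cdot\nabla u)$ produces $\nabla^3u$, and hence $\nabla^2 B$. This is not controlled for 3D weak solutions. It also cannot be interpolated from $\nabla^2W$ and the energy, since $\nabla^2W-\nabla^2B=\nabla^2\omega$. You should integrate by parts so that at most $\nabla^2u$ appears. Its $L^2_{t,x}$ norm is bounded by $\int(\|\nabla W\|_{L^2}^2+\|\nabla B\|_{L^2}^2)\,dt$ from part (1), which gives the paper's weight $\nu^{-1}(\|\nabla u\|_{L^2}^2+\|\Delta u\|_{L^2}^2)$. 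The regularization and compactness parts of your plan are acceptable.
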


We shall also use the following asymptotic behavior of Leray-Hopf weak solutions, both in the proof of Theorem \ref{thm:3D_weak_sol} and in the subsequent derivation of decay rates.

\begin{proposition}\label{prop:non-uniform}
Let $d=2,3$, and let $(u,B)$ be a global Leray-Hopf weak solution of \eqref{HMHD} corresponding to $(u_0,B_0)\in L^2_\sigma(\R^d)$. Then, 
\[
\|u(t)\|_{L^2}^2+\|B(t)\|_{L^2}^2\rightarrow0\quad\text{as $t\rightarrow\infty$.}
\]
\end{proposition}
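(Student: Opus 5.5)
We prove Proposition \ref{prop:non-uniform} by adapting the classical non-uniform $L^2$-decay argument for Leray-Hopf solutions of Navier-Stokes (Masuda, Schonbek, Wiegner) to the Hall-MHD system. The only properties of $(u,B)$ we use are the strong energy inequality \eqref{energy ineq}, the weak (integral) formulation recalled in Appendix \ref{sec:app.C}, and the divergence-free structure.

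\textbf{Step 1 (reduction to a low-frequency statement).} By \eqref{energy ineq}, the energy $E(t):=\|u(t)\|_{L^2}^2+\|B(t)\|_{L^2}^2$ is nonincreasing on a set of full measure of starting times. It therefore suffices to show that $\liminf_{t\to\infty}E(t)=0$, or more precisely that $E(t_n)\to0$ along a sequence $t_n\to\infty$ that the strong energy inequality can be restarted from. We fix $\varepsilon>0$ and split $L^2$ into low and high frequencies with a radius $\rho>0$. Since $\int_0^\infty(\|\nabla u\|_{L^2}^2+\|\nabla B\|_{L^2}^2)\,d\tau<\infty$, there are arbitrarily large times at which $\|\nabla u\|_{L^2}^2+\|\nabla B\|_{L^2}^2$ is as small as we like. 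At such times the high-frequency part $\int_{|\xi|>\rho}(|\widehat u|^2+|\widehat B|^2)\,d\xi\le \rho^{-2}(\|\nabla u\|_{L^2}^2+\|\nabla B\|_{L^2}^2)$ is small. The task therefore reduces to showing that the low-frequency energy $\int_{|\xi|\le\rho}(|\widehat u(t)|^2+|\widehat B(t)|^2)\,d\xi$ is small for large $t$, uniformly, once $\rho$ is chosen small.

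\textbf{Step 2 (low-frequency bound via the integral formulation).} We write Duhamel's formula in Fourier variables,
\[
\widehat u(t,\xi)=e^{-\nu|\xi|^2t}\widehat u_0(\xi)+\int_0^t e^{-\nu|\xi|^2(t-s)}\widehat{N_u}(s,\xi)\,ds,
\]
and the analogous formula for $B$ with $\eta$. The nonlinearities are in divergence form: $N_u=-\mathbb{P}\nabla\cdot(u\otimes u-B\otimes B)$, and $N_B=\nabla\times(u\times B)-\nabla\times\nabla\cdot(B\otimes B)$ for the Hall term, using $(\nabla\times B)\times B=\nabla\cdot(B\otimes B)-\tfrac12\nabla|B|^2$. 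Since each product is in $L^1$ with norm at most $E(s)\le\mathcal{E}_0$, this gives the pointwise bounds
\[
|\widehat{N_u}(s,\xi)|\le C|\xi|\mathcal{E}_0,\qquad |\widehat{N_B}(s,\xi)|\le C(|\xi|+|\xi|^2)\mathcal{E}_0 .
\]
The Hall term carries the extra factor $|\xi|^2$, which is harmless at low frequencies and is controlled by $e^{-\eta|\xi|^2(t-s)}$. Squaring and integrating over $|\xi|\le\rho$, the linear parts $\int_{|\xi|\le\rho}e^{-2\nu|\xi|^2t}|\widehat u_0|^2\,d\xi$ tend to $0$ as $t\to\infty$ by dominated convergence, which is where the merely-$L^2$ data enters. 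Rather than bounding the time integral of the nonlinear part crudely (which grows in $t$), we restart the formula from a time $t_0$ and integrate over $[t_0,t]$. Using $\int_{t_0}^t e^{-\nu|\xi|^2(t-s)}|\xi|\,ds\le \nu^{-1}|\xi|^{-1}$, the nonlinear contribution is bounded by $C\mathcal{E}_0^2\int_{|\xi|\le\rho}|\xi|^{-2}(1+|\xi|)^2\,d\xi\lesssim \mathcal{E}_0^2\rho^{d-2}$. This is small in $d=3$.

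\textbf{Step 3 (the main obstacle, $d=2$).} In $d=2$ the crude bound from Step 2 is only logarithmically divergent and not small, so a sharper argument is needed, and this is the main difficulty. We would instead estimate the nonlinear flux with one factor in $L^2$ and one gradient, $\|u\otimes u\|_{L^1}$ replaced by a bound in $L^2_tL^1$-type integrability using $\|\nabla u\|_{L^2}$. More concretely, we restart at $t_0$ large, where $\int_{t_0}^\infty(\|\nabla u\|_{L^2}^2+\|\nabla B\|_{L^2}^2)\,ds<\delta$, and bound
\[
\Big|\int_{t_0}^t e^{-\nu|\xi|^2(t-s)}\widehat{N}\,ds\Big|\le C|\xi|\int_{t_0}^t e^{-\nu|\xi|^2(t-s)}\mathcal{E}_0^{1/2}(\|\nabla u\|_{L^2}+\|\nabla B\|_{L^2})\,ds ,
\]
using $\|\widehat{fg}\|_{L^\infty}\le\|f\|_{L^2}\|g\|_{L^2}$ together with the form $(u\cdot\nabla)u$ to place a gradient on one factor. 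Cauchy-Schwarz in $s$ then yields $C\mathcal{E}_0^{1/2}\delta^{1/2}\nu^{-1/2}$, uniformly in $\xi$, so the low-frequency nonlinear contribution is at most $C\mathcal{E}_0\delta\rho^2$ with no divergence. For the Hall term, $\nabla\times((\nabla\times B)\times B)$, one factor is $\nabla B\in L^2_tL^2_x$ and the other is $B\in L^\infty L^2$; with the extra $|\xi|\le\rho$ factor, the same bound holds. This refined restart argument works in both dimensions. Combining it with the decay of the linear part of the data restarted at $t_0$ (dominated convergence applied to $(u(t_0),B(t_0))\in L^2$) and with Step 1, we obtain $\limsup_{t\to\infty} E(t)\le C\varepsilon$. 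The delicate points to verify are the validity of Duhamel's formula from a.e. restart time $t_0$ for weak solutions, which is supplied by Appendix \ref{sec:app.C}, and the passage from a sequence of good times to all $t$, which follows from the monotonicity given by \eqref{energy ineq}.
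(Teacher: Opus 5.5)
Your Steps 1 and 2 are correct, and for $d=3$ they already give a complete proof by a route different from the paper's. For the high frequencies you use the monotonicity of the energy from the strong energy inequality, together with the smallness of $\rho^{-2}(\|\nabla u\|_{L^2}^2+\|\nabla B\|_{L^2}^2)$ at suitable times; the paper instead uses Fourier splitting with the weight $(1+t)^\alpha$. For the low frequencies you use the pointwise Duhamel bound $|\widehat u(t,\xi)|\le e^{-\nu|\xi|^2t}|\widehat u_0(\xi)|+C\nu^{-1}\mathcal E_0|\xi|^{-1}$ (and its analogue for $B$), where the paper uses the low-frequency energy inequality of Lemma~\ref{lem:non-uniform}. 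Your bound needs only that $|\xi|^{-2}$ is integrable near $0$ in $\R^3$, and it holds even without restarting.

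The gap is Step 3, the case $d=2$. Your displayed estimate needs
\[
|\widehat N(s,\xi)|\le C|\xi|\,\mathcal E_0^{1/2}\big(\|\nabla u\|_{L^2}+\|\nabla B\|_{L^2}\big),
\]
and this is false for the first-order nonlinearities $\mathbb P\nabla\cdot(u\otimes u-B\otimes B)$ and $\nabla\times(u\times B)$. The factor $|\xi|$ comes from the divergence form, after which only $|\widehat{u\otimes u}(\xi)|\le\|u\|_{L^2}^2$ is available pointwise. Writing the term as $(u\cdot\nabla)u$ instead gives $|\widehat{(u\cdot\nabla)u}(\xi)|\le\|u\|_{L^2}\|\nabla u\|_{L^2}$, but then the factor $|\xi|$ is lost. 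You cannot have both: a bound $|\widehat{u\otimes u}(\xi)|\lesssim\|u\|_{L^2}\|\nabla u\|_{L^2}$ fails in $\R^2$. Under $u\mapsto u(\lambda\,\cdot)$ the left side at $\xi=0$ scales like $\lambda^{-2}$ and the right side like $\lambda^{-1}$, so it breaks as $\lambda\to0$. With either correct pointwise bound (or any interpolation of the two), the time integral over $[t_0,t]$ is of order $|\xi|^{-1}$; in the second case it is $C(\mathcal E_0\delta/\nu)^{1/2}|\xi|^{-1}$. Since $\int_{|\xi|\le\rho}|\xi|^{-2}\,d\xi=\infty$ in $\R^2$, the small factor $\delta$ does not help: pointwise-in-$\xi$ Duhamel bounds are exactly critical in two dimensions. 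Only the Hall term, which carries two derivatives, genuinely satisfies your bound, since $|\widehat{(\nabla\times B)\times B}(\xi)|\le\|B\|_{L^2}\|\nabla B\|_{L^2}$.

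What is missing is an $L^2_\xi$ bound on the flux instead of an $L^\infty_\xi$ bound. Keep the divergence form and use Plancherel and Ladyzhenskaya: $\|\widehat{u\otimes u}(s)\|_{L^2_\xi}\le C\|u(s)\|_{L^4}^2\le C\|u(s)\|_{L^2}\|\nabla u(s)\|_{L^2}$. Then apply Cauchy--Schwarz in $s$ for fixed $\xi$, using $\int_{t_0}^t|\xi|^2e^{-2\nu|\xi|^2(t-s)}\,ds\le(2\nu)^{-1}$, and only afterwards integrate in $\xi$:
\[
\int_{\R^2}\Big(\int_{t_0}^t e^{-\nu|\xi|^2(t-s)}|\xi|\,|\widehat{u\otimes u}(s,\xi)|\,ds\Big)^2d\xi\le\frac{1}{2\nu}\int_{t_0}^t\|\widehat{u\otimes u}(s)\|_{L^2_\xi}^2\,ds\le\frac{C\mathcal E_0}{\nu}\int_{t_0}^t\|\nabla u(s)\|_{L^2}^2\,ds .
\]
The same works for $B\otimes B$ and $u\times B$; for the Hall term, use $|\xi|^2\le\rho^2$ on $\{|\xi|\le\rho\}$. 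This quantity is small once $t_0$ is large. It is essentially the paper's mechanism, where the nonlinear terms in Lemma~\ref{lem:non-uniform} are bounded by $\|\widehat{u\otimes u}\|_{L^d}\,\||\xi|\widehat u\|_{L^2}\le C\|u\|_{L^2}\|\nabla u\|_{L^2}^2$. With this replacement your overall scheme works in both dimensions.
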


\begin{proposition}\label{prop:non-uniform_B+omega}
Let $d=3$, and let $(u,B)$ be a global Leray-Hopf weak solution of \eqref{HMHD} constructed in Proposition \ref{prop:existence_B+omega} (1). Then, 
\[
\|(B+\omega)(t)\|_{L^2}\rightarrow 0\quad \text{as $t\rightarrow\infty$.}
\]
\end{proposition}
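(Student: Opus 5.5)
We aim to prove Proposition \ref{prop:non-uniform_B+omega} with a Fourier splitting argument applied to the equation \eqref{eq of B+omega} for $W:=B+\omega$. We take as given the uniform bound \eqref{3D_B+omega_bound} from Proposition \ref{prop:existence_B+omega} (1) and the energy decay of Proposition \ref{prop:non-uniform}. Since $\nabla W\in L^2(0,\infty;L^2)$, the quantity $\|\nabla W(t)\|_{L^2}$ is small on most of any late time interval. The real task is to show that the $L^2$ mass of $W$ cannot stay at low frequencies.

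First we rewrite the right-hand side as
\[
\nu\Delta\omega+\eta\Delta B=\nu\Delta W+(\eta-\nu)\Delta B.
\]
The $L^2$ energy estimate for $W$ (justified at the level of the approximate solutions used in Proposition \ref{prop:existence_B+omega}, and then passed to the limit through lower semicontinuity) gives
\[
\frac{d}{dt}\|W\|_{L^2}^2+\nu\|\nabla W\|_{L^2}^2\le 2\left|\int (W\cdot\nabla u)\cdot W\,dx\right| + C\frac{(\nu-\eta)^2}{\nu}\|\nabla B\|_{L^2}^2 .
\]
For the stretching term we use the bound $\|W\|_{L^3}^2\|\nabla u\|_{L^3}$. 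Gagliardo--Nirenberg gives
\[
\|W\|_{L^3}^2\le\|W\|_{L^2}\|\nabla W\|_{L^2},
\]
and $\|\nabla u\|_{L^3}\lesssim\|\omega\|_{L^3}\le\|W\|_{L^3}+\|B\|_{L^3}$. The term with $\|W\|_{L^3}^3$ is cubic, but it is absorbed into the dissipation because $\|W\|_{L^2}^2\le C_0<1$; this is exactly the smallness used in the construction. The term with $\|B\|_{L^3}$ is handled by $\|B\|_{L^3}\lesssim\|B\|_{L^2}^{1/2}\|\nabla B\|_{L^2}^{1/2}$. After Young's inequality we obtain
\[
\frac{d}{dt}\|W\|_{L^2}^2+\frac{\nu}{2}\|\nabla W\|_{L^2}^2\le f(t),
\]
with $f\in L^1(0,\infty)$ built from $\|\nabla B\|_{L^2}^2$ and $\|B\|_{L^2}^2\|\nabla B\|_{L^2}^2$. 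This is integrable by the energy inequality \eqref{weak_en_ineq}.

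Next comes the Fourier splitting step. Choose a ball $S(t)=\{|\xi|\le \rho(t)\}$ and use
\[
\|\nabla W\|_{L^2}^2\ge\rho^2\|W\|_{L^2}^2-\rho^2\int_{S(t)}|\widehat W|^2\,d\xi .
\]
To bound the low-frequency part we do not estimate $W$ itself. Instead we note that $\widehat W=\widehat B+i\xi\times\widehat u$, so
\[
\int_{S}|\widehat W|^2\lesssim\int_S|\widehat B|^2+\rho^2\|u\|_{L^2}^2 .
\]
The low-frequency $L^2$ mass of $B$ is at most $\|B(t)\|_{L^2}^2$, which tends to $0$ by Proposition \ref{prop:non-uniform}. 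Fix $\rho$ constant, which is enough for convergence to zero without a rate. Then
\[
\frac{d}{dt}\|W\|_{L^2}^2+\frac{\nu\rho^2}{2}\|W\|_{L^2}^2\le f(t)+C\rho^2\bigl(\|B(t)\|_{L^2}^2+\rho^2\|u(t)\|_{L^2}^2\bigr)=:g(t),
\]
where $g=f+o(1)$. Integrating with the factor $e^{\nu\rho^2t/2}$ and splitting the time integral at $t/2$ gives
\[
\limsup_{t\to\infty}\|W(t)\|_{L^2}^2\le \frac{C}{\nu\rho^2}\limsup_{t\to\infty}\Bigl(\text{nonintegrable part of }g\Bigr)=0 ,
\]
since the integrable part $f$ contributes $\int_{t/2}^t f\to0$, and the early part carries the factor $e^{-\nu\rho^2t/4}$. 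In fact a fixed $\rho$ combined with the $o(1)$ low-frequency bound already yields the limit zero.

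The main obstacle is justification rather than the computation. The solution is only Leray--Hopf, so the differential inequality must hold in integrated form for almost every $s$ and all $t\ge s$. Our plan is to establish this for the Galerkin/regularized approximations in Proposition \ref{prop:existence_B+omega}, where all constants are uniform, and pass to the limit using the strong $L^2_{loc}$ convergence of $(u,B)$ and weak lower semicontinuity for $W$. A second delicate point is the absorption of the cubic stretching term, which requires the uniform smallness $C_0<1$ (with the precise constants) to persist along the approximations. If this proves awkward, we would instead run the Fourier splitting directly on the approximations with constants uniform in the approximation parameter, and take limits at the end.
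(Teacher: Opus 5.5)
Your strategy is correct but different from the paper's, and one step needs repair. The paper reruns the full Schonbek-type argument of Appendix~A on $\Gamma=B+\omega$. It uses the two integrated inequalities of Lemma~\ref{lem:B+omega}. The low frequencies $\varphi\widehat\Gamma$ are handled through a Duhamel-type inequality whose nonlinear terms are bounded by $\int_s^t(\|\nabla B\|_{L^2}^2+\|\nabla\Gamma\|_{L^2}^2)\,d\tau$. The high frequencies $\psi\widehat\Gamma$ are handled with the weight $(1+t)^\alpha$ and the shrinking ball $|\xi|\le G(t)$.

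You instead exploit that $\omega$ is a derivative: $\widehat W=\widehat B+i\xi\times\widehat u$ gives $\int_{|\xi|\le\rho}|\widehat W|^2\,d\xi\lesssim\|B(t)\|_{L^2}^2+\rho^2\|u(t)\|_{L^2}^2$. Proposition~\ref{prop:non-uniform} then removes the low frequencies for free, and a fixed splitting radius plus one energy inequality for $W$ suffices. This is shorter and cleaner. The cost is relying on Proposition~\ref{prop:non-uniform} as a black box. Justifying an integrated energy-type inequality for the weak solution is needed in both routes; the paper's Lemma~\ref{lem:B+omega} is likewise only asserted.

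The step that fails as written is your treatment of the stretching term. By Gagliardo--Nirenberg, $\|W\|_{L^3}^3\lesssim\|W\|_{L^2}^{3/2}\|\nabla W\|_{L^2}^{3/2}$. This is only of order $3/2$ in $\|\nabla W\|_{L^2}$, so smallness of $\|W\|_{L^2}$ does not let you absorb it into $\nu\|\nabla W\|_{L^2}^2$; this is the same obstruction as in the 3D vorticity equation. Likewise, Young applied to $\|W\|_{L^3}^2\|B\|_{L^3}$ produces $\|W\|_{L^2}^2\|B\|_{L^2}\|\nabla B\|_{L^2}$, not $\|B\|_{L^2}^2\|\nabla B\|_{L^2}^2$. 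Neither term is known to lie in $L^1(0,\infty)$, so your claim that $f\in L^1$ is unsupported.

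There are two easy fixes.

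\textbf{Integrate by parts.} Using $\nabla\cdot W=0$,
\[
\Bigl|\int (W\cdot\nabla u)\cdot W\,dx\Bigr|=\Bigl|\int (W\cdot\nabla W)\cdot u\,dx\Bigr|\le\|u\|_{L^6}\|W\|_{L^3}\|\nabla W\|_{L^2}\le\frac{\nu}{8}\|\nabla W\|_{L^2}^2+\frac{C}{\nu^3}\|\nabla u\|_{L^2}^4\|W\|_{L^2}^2 .
\]
Since $\|\nabla u\|_{L^2}^2=\|\omega\|_{L^2}^2\le 2\|W\|_{L^2}^2+2\|B\|_{L^2}^2\le C$, the forcing is $\lesssim\|\nabla u\|_{L^2}^2+\|\nabla B\|_{L^2}^2\in L^1(0,\infty)$. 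This is an inequality of the same type as the a priori estimate in the proof of Proposition~\ref{prop:existence_B+omega}. Its time tails are controlled by the energy inequality, which also helps when passing to the limit.

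\textbf{Keep your bound without absorbing.} With $\|W\|_{L^2}$ and $\|B\|_{L^2}$ bounded, the forcing is $\lesssim\|\nabla W\|_{L^2}^{3/2}+\|\nabla W\|_{L^2}\|\nabla B\|_{L^2}^{1/2}$. By \eqref{3D_B+omega_bound} and the energy inequality this lies in $L^{4/3}(0,\infty)$. Your exponentially damped inequality only needs $\int_{t/2}^t e^{-c(t-\tau)}g(\tau)\,d\tau\to0$ and $\int_0^{t/2}e^{-c(t-\tau)}g(\tau)\,d\tau\to0$, and both follow from H\"older for such $g$.

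With either fix, the rest of your argument goes through. Your fallback of running the splitting directly on the approximations is not easier. Proposition~\ref{prop:non-uniform} gives no decay of $\|u_\epsilon(t)\|_{L^2}$ and $\|B_\epsilon(t)\|_{L^2}$ that is uniform in $\epsilon$.
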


Proposition \ref{prop:existence_B+omega} represents that under some assumptions on initial data $(u_0,B_0)$ there exists a global Leray-Hopf weak solution having uniform-in-time bounds of $B+\omega$. Proposition \ref{prop:non-uniform} represents the $L^2$ decay of $u$ and $B$ for Leray-Hopf weak solutions, and Proposition \ref{prop:non-uniform_B+omega} represents the $L^2$ decay of $B+\omega$ for the weak solutions considered in Proposition \ref{prop:existence_B+omega}. We first prove Theorem \ref{thm:3D_weak_sol} assuming Propositions \ref{prop:existence_B+omega}-\ref{prop:non-uniform_B+omega} and then prove Proposition \ref{prop:existence_B+omega}. For smooth flow of reading, we postpone the proofs of Propositions \ref{prop:non-uniform} and \ref{prop:non-uniform_B+omega} to Appendices \ref{sec:app.A} and \ref{sec:app.B}, respectively.

\begin{proof}[Proof of Theorem \ref{thm:3D_weak_sol}]

From the energy inequality \eqref{energy ineq} and Proposition \ref{prop:non-uniform}, a Leray-Hopf weak solution of \eqref{HMHD} satisfies that for any $\epsilon>0$ there exists $T_0>0$ such that 
\[
\mathcal{E}_{T_0}:=\|u(T_0)\|_{L^2}^2+\|B(T_0)\|_{L^2}^2\le \epsilon,\quad \|\omega(T_0)\|_{L^2}^2=\|\nabla u(T_0)\|_{L^2}^2\le \epsilon.
\] 
We choose sufficiently small $\epsilon>0$ to satisfy
\bq\label{CT0_def}
C_{T_0}:=\lt(\|(B+\omega)(T_0)\|_{L^2}^2+C\mathcal{E}_{T_0}\frac{(\nu-\eta)^2}{\nu\eta}\rt)\exp{\lt(\frac{C}{\nu^4}(\mathcal{E}_{T_0}+\mathcal{E}_{T_0}^2)\rt)}<1.
\eq
Then, from Proposition \ref{prop:existence_B+omega} we can find a Leray-Hopf weak solution $(\tilde{u},\tilde{B})$ on $[T_0,\infty)$, which satisfies 
\[
\|(\tilde{B}+\nabla\times\tilde{u})(t)\|_{L^2}^2+\nu\int^t_{T_0}\|\nabla(\tilde{B}+\nabla\times\tilde{u})(\tau)\|_{L^2}^2\,d\tau\le C_{T_0},\quad\text{for all $t\ge T_0$.}
\]
Moreover, from Proposition \ref{prop:non-uniform_B+omega}, $\|(B+\omega)(t)\|_{L^2}\rightarrow 0$ as $t\rightarrow \infty$ as well. Hence, we conclude the proof of (1) of Theorem \ref{thm:3D_weak_sol} by defining a new solution (still denoted by $(u,B)$)
\[
\begin{cases}
(u,B)\quad \text{on $[0,T_0]$,}\\
(\tilde{u},\tilde{B})\quad \text{on $[T_0,\infty)$.}
\end{cases}
\]

Additionally if $\nu=\eta$, from the inequality 
\[
\|(B+\omega)(t)\|_{L^2}^2+\nu\int^t_{T_0}\|\nabla(B+\omega)(\tau)\|_{L^2}^2\,d\tau\le C_{T_0},\quad t\ge T_0,
\]
we have $T_1 \ge T_0$ such that $(u(T_1),B(T_1))\in L^2(\R^3)$ and $(B+\omega)(T_1)\in H^1(\R^3)$ with $C_{T_1}<1$, where $C_{T_1}$ is simply obtained by replacing $T_0$ with $T_1$ in \eqref{CT0_def}. Then by (2) of Proposition \ref{prop:existence_B+omega}, there exists a Leray-Hopf weak solution $(u_1,B_1)$ on $[T_1,\infty)$ such that $B_1+\nabla\times u_1\in L^\infty([T_1,\infty);H^1)$ and $\nabla(B_1+\nabla\times u_1)\in L^2(T_1,\infty;H^1)$. In a similar manner, there exists $T_2\ge T_1$ such that $(u_1(T_2),B_1(T_2))\in L^2(\R^3)$ and $(B_1+\nabla\times u_1)(T_2)\in H^2(\R^3)$ with $C_{T_2}<1$, which implies that there exists a Leray-Hopf weak solution $(u_2,B_2)$ on $[T_2,\infty)$ such that $B_2+\nabla\times u_2\in L^\infty([T_2,\infty);H^2)$ and $\nabla(B_2+\nabla\times u_2)\in L^2(T_2,\infty;H^2)$. Since the desired decay rates of (2) follow by the uniform bounds \eqref{3D_B+omega_bound2} and the Sobolev embedding theory, we conclude the proof of (2) of Theorem \ref{thm:3D_weak_sol}.
\end{proof}

\subsection{Proof of Proposition \ref{prop:existence_B+omega}}
The uniform bound \eqref{3D_B+omega_bound} stems from the a priori estimate similar to \cite[Corollary 1.1]{BKS}. However in contrast to \cite[Corollary 1.1]{BKS}, we do not suppose the condition of coefficients $\frac{|\nu-\eta|}{\nu+\eta}\ll 1$. To this end, we consider a smooth solution $(u,B)$ of \eqref{HMHD} for $d=3$, which naturally satisfies
\[
\frac{1}{2}\frac{d}{dt}\|B+\omega\|_{L^2}^2+\nu\|\nabla(B+\omega)\|_{L^2}^2=(\nu-\eta)\int_{\R^3}\nabla B :\nabla(B+\omega)\,dx+\int_{\R^3}\lt((B+\omega)\cdot\nabla u\rt)\cdot(B+\omega)\,dx
\]
from \eqref{eq of B+omega}. Following the estimates in the proof of \cite[Corollary 1.1]{BKS}, we deduce that
\[
\frac{d}{dt}\|B+\omega\|_{L^2}^2+\frac{\nu}{2}\lt(3-\|B+\omega\|_{L^2}^{\frac{3}{2}}\rt)\|\nabla(B+\omega)\|_{L^2}^2\le C\frac{(\nu-\eta)^2}{\nu}\|\nabla B\|_{L^2}^2+\frac{C}{\nu^3}\lt(1+\|B\|_{L^2}^2\rt)\|\nabla u\|_{L^2}^2\|B+\omega\|_{L^2}^2,
\]
which implies that if $C_0<1$
\[
\|(B+\omega)(t)\|_{L^2}^2+\nu\int^t_0\|\nabla(B+\omega)(\tau)\|_{L^2}^2\,d\tau\le \lt(\|B_0+\omega_0\|_{L^2}^2+C\mathcal{E}_0\frac{(\nu-\eta)^2}{\nu\eta}\rt)\exp{\lt(\frac{C}{\nu^4}(\mathcal{E}_0+\mathcal{E}_0^2)\rt)}=C_0<1
\]
as long as the solution exists.

Now, it is quite standard to show the existence of Leray-Hopf weak solutions satisfying \eqref{3D_B+omega_bound} and \eqref{3D_B+omega_bound2}. We use the mollifier technique \cite{Majda Bertozzi}, and the regularized system of \eqref{HMHD} described in \cite{Chae Degond Liu}. For the following mollifier operator,
\[
\mathcal{J}_\epsilon f=\rho_\epsilon\ast f,\quad \rho_\epsilon=\epsilon^{-3}\rho(x/\epsilon),\quad \rho: \text{non-negative $C^\infty_0$ function with unit integral}, 
\]
we consider the regularized system as follows:
\begin{align*}
& \pa_t u_\epsilon+\mathcal{J}_\epsilon(\mathcal{J}_\epsilon\omega_\epsilon\times \mathcal{J}_\epsilon u_\epsilon)+\nabla p_\epsilon=\mathcal{J}_\epsilon\lt( (\nabla\times \mathcal{J}_\epsilon B_\epsilon)\times \mathcal{J}_\epsilon B_\epsilon\rt)+\nu\Delta\mathcal{J}_\epsilon^2 u_\epsilon, \\
& \pa_t B_\epsilon -\nabla\times \lt(\mathcal{J}_\epsilon(\mathcal{J}_\epsilon u_\epsilon\times \mathcal{J}_\epsilon B_\epsilon)\rt)+\nabla\times \lt(\mathcal{J}_\epsilon\lt( (\nabla\times \mathcal{J}_\epsilon B_\epsilon)\times \mathcal{J}_\epsilon B_\epsilon\rt)\rt)=\eta\Delta\mathcal{J}_\epsilon^2 B_\epsilon,\\
&\dv u_\epsilon=\dv B_\epsilon =0,
\end{align*}
with initial condition
\[
(u_\epsilon, B_\epsilon)(0,x)=(\mathcal{J}_\epsilon u_0,\mathcal{J}_\epsilon B_0)(x).
\]
As in \cite{Chae Degond Liu}, for each $\epsilon>0$ there exists a global smooth solution $(u_\epsilon, B_\epsilon)$ satisfying the energy inequality: for all $t\ge s\ge 0$
\[
\frac{1}{2}\lt(\|u_\epsilon(t)\|_{L^2}^2+\|B_\epsilon(t)\|_{L^2}^2\rt)+\nu\int^t_s\|\nabla\mathcal{J}_\epsilon u_\epsilon(\tau)\|_{L^2}^2\,d\tau+\eta\int^t_s \|\nabla \mathcal{J}_\epsilon B_\epsilon(\tau)\|_{L^2}^2\,d\tau\le \frac{1}{2}\lt(\|u_\epsilon(s)\|_{L^2}^2+\|B_\epsilon(s)\|_{L^2}^2\rt).
\]
Moreover, using the equation of $B_\epsilon+\omega_\epsilon$ 
\[
\pa_t(B_\epsilon+\omega_\epsilon)+\nabla\times\lt(\mathcal{J}_\epsilon\lt(\mathcal{J}_\epsilon(B_\epsilon+\omega_\epsilon)\times \mathcal{J}_\epsilon u_\epsilon\rt)\rt)=\nu\Delta\mathcal{J}_\epsilon^2\omega_\epsilon+\eta\Delta\mathcal{J}_\epsilon^2 B_\epsilon,
\]
and following the same process for the a priori estimate, we deduce that if $C_0<1$
\begin{align*}
&\|(B_\epsilon+\omega_\epsilon)(t)\|_{L^2}^2+\nu\int^t_0\|\nabla\mathcal{J}_\epsilon (B_\epsilon+\omega_\epsilon)(\tau)\|_{L^2}^2\,d\tau \\
&\le \lt(\|(B_\epsilon+\omega_\epsilon)(0)\|_{L^2}^2+C\frac{(\nu-\eta)^2}{\nu}\int^t_0\|\nabla\mathcal{J}_\epsilon B_\epsilon(\tau)\|_{L^2}^2\,d\tau\rt)\exp{\lt[\frac{C}{\nu^3}\int^t_0(1+\|B_\epsilon(\tau)\|_{L^2}^2)\|\nabla\mathcal{J}_\epsilon u_\epsilon(\tau)\|_{L^2}^2\,d\tau\rt]}.
\end{align*}
Additionally if $\nu=\eta>0$, we also have
\begin{align*}
&\|(B_\epsilon+\omega_\epsilon)(t)\|_{H^k}^2+\nu\int^t_0\|\nabla\mathcal{J}_\epsilon (B_\epsilon+\omega_\epsilon)(\tau)\|_{H^k}^2\,d\tau \\
&\le \|(B_\epsilon+\omega_\epsilon)(0)\|_{H^k}^2 \exp{\lt[\frac{C}{\nu}\int^t_0(\|\nabla\mathcal{J}_\epsilon u(\tau)\|_{L^2}^2+\|\Delta\mathcal{J}_\epsilon u(\tau)\|_{L^2}^2)\,d\tau\rt]}
\end{align*}
(see the proof of \cite[Theorem 1.1]{BKS}).
Finally, by taking the limit $\epsilon\rightarrow 0$ we conclude that there exists a global Leray-Hopf weak solution satisfying the desired uniform-in-time bound of $B+\omega$.

\section{Decoupled decay rates via the generalized Fourier splitting method}\label{sec:decay-rates}

In this section, we prove the decoupled decay rates of Leray-Hopf weak solutions of \eqref{HMHD} applying the generalized Fourier splitting method developed in \cite{BJS} (Theorem \ref{thm:2D-decay} for $d=2$ and Theorem \ref{thm:3D-decay} for $d=3$).
Throughout this section, we set $\nu=\eta=1$ for notational simplicity. It should be noted that the choice of $\nu,\eta>0$ only changes some constants appearing in each estimate, and so do not affect Theorems \ref{thm:2D-decay} and \ref{thm:3D-decay} and their proofs (this choice may be significant for Theorem \ref{thm:3D_weak_sol}).

For Leray-Hopf weak solutions $(u,B)$ of \eqref{HMHD}, we can express $(u,B)$ as an integral form
\begin{equation}\label{integral_form}
\begin{aligned}
u(t,x) &= e^{t\Delta}u_0(x)-\int^t_0 e^{(t-\tau)\Delta}\mathbb{P}{\nabla}\cdot\left(u\otimes u- B\otimes B\right)(\tau,x)\,d\tau, \\
B(t,x) &= e^{t\Delta}B_0(x)+\int^t_0 e^{(t-\tau)\Delta}{\nabla}\times\left(u\times B-\nabla\cdot(B\otimes B)\right)(\tau,x)\,d\tau,
\end{aligned}
\end{equation}
in $\mathcal{S}'$ and for almost every $0<t<\infty$.
By taking the Fourier transform to \eqref{integral_form}, we obtain
\begin{equation}\label{Fourier_integral_form}
\begin{aligned}
\widehat{u}(t,\xi) &=e^{-t|\xi|^2}\widehat{u}_0(\xi)-\int^t_0e^{-(t-\tau)|\xi|^2}m(\xi) i{\xi}\cdot\left(\widehat{u\otimes u}-\widehat{B\otimes B}\right)(\tau,\xi)\,d\tau, \\
\widehat{B}(t,\xi) &=e^{-t|\xi|^2}\widehat{B}_0(\xi)+\int^t_0e^{-(t-\tau)|\xi|^2} i{\xi}\times\left(\widehat{u\times B}-i\xi\cdot(\widehat{B\otimes B})\right)(\tau,\xi)\,d\tau
\end{aligned}
\end{equation}
where $m(\xi)$ is a matrix valued Fourier multiplier of the Leray projection operator $\mathbb{P}$ defined as $m_{ij}(\xi)=\delta_{ij}-\xi_i\xi_j/|\xi|^2$. As described in Section \ref{sec:intro}, when $d=2$, the system is understood in the $2\frac12$-dimensional sense with $\nabla$ and $\xi$ replaced by $\widetilde{\nabla}=(\partial_1,\partial_2,0)$ and $\widetilde{\xi}=(\xi_1,\xi_2,0)$, respectively. The justification of \eqref{integral_form} and \eqref{Fourier_integral_form} for Leray-Hopf weak solutions will be discussed in Appendix \ref{sec:app.C}.

Now, we recall the low frequency pseudomeasure space $\mathcal{Y}^\sigma_{l}$
\[
\mathcal{Y}^\sigma_l := \lt\{f \in \mathcal{S}'(\R^2) \ | \ \sup_{|\xi|\leq 1}  |\xi|^\sigma |\widehat{f}(\xi)| <\infty\rt\},
\]
and note that $\mathcal{Y}^{\sigma_1}_l\hookrightarrow\mathcal{Y}^{\sigma_2}_l$ when $\sigma_1\leq\sigma_2$. Below, we present three technical lemmas to be used throughout this section.

\begin{lemma}\label{lem:heat_kernel}
The heat kernel $e^{t\Delta}$ in $\R^d$ has the following estimates:
\begin{enumerate}
\item For $1\le p\le q\le \infty$,
\[
\|e^{t\Delta}f\|_{L^q}\le C(p,q) t^{-\frac{d}{2}\lt(\frac{1}{p}-\frac{1}{q}\rt)}\|f\|_{L^p},\quad \| e^{t\Delta}\nabla f\|_{L^q}\le C(p,q) t^{-\frac{d}{2}\lt(\frac{1}{p}-\frac{1}{q}\rt)-\frac{1}{2}}\|f\|_{L^p}
\]

\item For $\sigma<\frac{d}{2}$,
\[
\|e^{t\Delta}f\|_{L^2}^2\leq C t^{-\frac{d}{2}+\sigma}\lt(\|f\|_{\mathcal{Y}^\sigma_l}^2+\|f\|_{L^2}^2\rt).
\]
\end{enumerate}
\end{lemma}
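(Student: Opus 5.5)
Both estimates are read off the explicit Fourier representation of the heat semigroup, $\widehat{e^{t\Delta}f}(\xi)=e^{-t|\xi|^2}\widehat f(\xi)$, equivalently convolution with the Gaussian kernel $G_t(x)=(4\pi t)^{-d/2}e^{-|x|^2/(4t)}$.

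\emph{Part (1).} We use $e^{t\Delta}f=G_t\ast f$ and Young's convolution inequality with $1+\frac1q=\frac1r+\frac1p$, which gives $\|e^{t\Delta}f\|_{L^q}\le \|G_t\|_{L^r}\|f\|_{L^p}$. Scaling shows $\|G_t\|_{L^r}=t^{-\frac d2(1-\frac1r)}\|G_1\|_{L^r}=C\,t^{-\frac d2(\frac1p-\frac1q)}$. For the derivative estimate we write $e^{t\Delta}\nabla f=(\nabla G_t)\ast f$, interpreted componentwise, i.e.\ $e^{t\Delta}\partial_j f=(\partial_jG_t)\ast f$ for vector or matrix valued $f$. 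Since $\nabla G_t(x)=t^{-\frac d2-\frac12}(\nabla G_1)(x/\sqrt t)$, we get $\|\nabla G_t\|_{L^r}=C\,t^{-\frac d2(\frac1p-\frac1q)-\frac12}$. Young's inequality then gives the claim. All constants are finite because $G_1$ and $\nabla G_1$ are Schwartz functions, so they lie in every $L^r$ with $1\le r\le\infty$.

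\emph{Part (2).} By Plancherel we split frequency space at $|\xi|=1$:
\[
\|e^{t\Delta}f\|_{L^2}^2=\int_{|\xi|\le1}e^{-2t|\xi|^2}|\widehat f(\xi)|^2\,d\xi+\int_{|\xi|>1}e^{-2t|\xi|^2}|\widehat f(\xi)|^2\,d\xi.
\]
\begin{itemize}
\item \textbf{Low frequencies.} We bound $|\widehat f(\xi)|^2\le \|f\|_{\mathcal Y^\sigma_l}^2|\xi|^{-2\sigma}$. Passing to polar coordinates and substituting $r=s/\sqrt t$ gives
\[
\int_{|\xi|\le1}e^{-2t|\xi|^2}|\xi|^{-2\sigma}\,d\xi\le C\int_0^\infty e^{-2tr^2}r^{d-1-2\sigma}\,dr=C\,t^{-\frac d2+\sigma}\int_0^\infty e^{-2s^2}s^{d-1-2\sigma}\,ds .
\]
The last integral is finite precisely because $d-2\sigma>0$, i.e.\ $\sigma<\frac d2$. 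This is the only place where that hypothesis enters.
\item \textbf{High frequencies.} Here $e^{-2t|\xi|^2}\le e^{-2t}$, so this piece is at most $e^{-2t}\|f\|_{L^2}^2\le C_\sigma t^{-\frac d2+\sigma}\|f\|_{L^2}^2$. The last inequality uses $\sup_{t>0}t^{\frac d2-\sigma}e^{-2t}<\infty$, which holds because $\frac d2-\sigma>0$.
\end{itemize}
Summing the two pieces gives (2). If a uniform bound near $t=0$ is wanted, one may also use the trivial bound $\|e^{t\Delta}f\|_{L^2}\le\|f\|_{L^2}$. However, the stated inequality already holds for all $t>0$, since $t^{-\frac d2+\sigma}$ blows up as $t\to0$.

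There is no serious obstacle. The only points needing care are the integrability of $s^{d-1-2\sigma}$ at the origin, which forces $\sigma<\frac d2$, and the fact that $\sigma$ may be negative, as with $\sigma=-1$. In that case $|\xi|^{-2\sigma}$ is bounded on $|\xi|\le1$ and the computation above is unchanged.
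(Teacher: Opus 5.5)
Your proof is correct and follows the paper's argument for part (2). Both use Plancherel, a split at $|\xi|=1$, and the pseudomeasure bound with polar coordinates and scaling on low frequencies. On high frequencies you use $e^{-2t|\xi|^2}\le e^{-2t}$ and $\sup_{t>0}t^{\frac d2-\sigma}e^{-2t}<\infty$, where the paper inserts $|\xi|^{d-2\sigma}\ge 1$ and uses $x^ke^{-ax}\le (k/a)^ke^{-k}$; this is the same elementary fact. For part (1) the paper only cites the estimate as well known, and your Young's inequality plus Gaussian scaling argument is the standard proof of it.
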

\begin{proof}
Since the estimate (1) is well-known, we only show the second relation. We use the Plancherel theorem to obtain
\[\begin{aligned}
\|e^{t\Delta}f\|_{L^2}^2 &= \int_{\R^d} e^{-|\xi|^2 t} |\hat f(\xi)|^2\,d\xi\\
& = \lt(\int_{\{ |\xi|\le 1\} } + \int_{\{ |\xi|>1\}}\rt)e^{-|\xi|^2 t} |\hat f(\xi)|^2\,d\xi\\
&\le \|f\|_{\mathcal{Y}_\ell^\sigma}^2 \int_{\{|\xi|\le 1\}} |\xi|^{-2\sigma} e^{-|\xi|^2 t}\,d\xi + \int_{\{ |\xi|>1\}} |\xi|^{d-2\sigma} e^{-|\xi|^2 t}|\hat f(\xi)|^2\,d\xi\\
&\le C\|f\|_{\mathcal{Y}_\ell^\sigma}^2 \int_0^1 r^{-2\sigma +d-1}e^{-r^2 t}\,dr  + Ct^{-\frac d2+\sigma} \|f\|_{L^2}^2\\
&\le C\|f\|_{\mathcal{Y}_\ell^\sigma}^2 t^{-\frac{d}{2}+\sigma}\int_0^t \tau^{-\sigma +\frac d2 -1}e^{-\tau}\,d\tau  + Ct^{-\frac d2+\sigma} \|f\|_{L^2}^2\\
&\le Ct^{-\frac{d}{2}+\sigma}\lt(\|f\|_{\mathcal{Y}^\sigma_l}^2+\|f\|_{L^2}^2\rt),
\end{aligned}\]
where we also used, for $a, k>0$,

\[
x^k e^{-ax} \le \lt(\frac{k}{a} \rt)^k e^{-k}, \quad \mbox{on } \ x>0, 
\]
\end{proof}

\begin{lemma}\label{lem:time_convolution}
For $0<\alpha,\beta<1$, 
\[
\int^t_0(t-\tau)^{-\alpha}\tau^{-\beta}\,d\tau= t^{1-\alpha-\beta}\int^1_0(1-\theta)^{-\alpha}\theta^{-\beta}\,d\theta=t^{1-\alpha-\beta}\mathcal{B}(1-\alpha,1-\beta)
\]
where $\mathcal{B}(\cdot,\cdot)$ is the beta function which is bounded as long as its variables are positive.
\end{lemma}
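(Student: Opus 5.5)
The statement is the elementary Beta-function identity for the time convolution. I would prove it by a direct change of variables.

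For fixed $t>0$, substitute $\tau=t\theta$ with $\theta\in[0,1]$, so that $d\tau=t\,d\theta$. Then $t-\tau=t(1-\theta)$, and the integrand becomes
\[
(t-\tau)^{-\alpha}\tau^{-\beta}\,d\tau=t^{-\alpha}(1-\theta)^{-\alpha}\,t^{-\beta}\theta^{-\beta}\,t\,d\theta .
\]
Pulling out the factor $t^{1-\alpha-\beta}$ gives the first equality.

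The second equality is the definition of the Beta function,
\[
\mathcal{B}(x,y)=\int_0^1\theta^{x-1}(1-\theta)^{y-1}\,d\theta,
\]
taken with $x=1-\beta$ and $y=1-\alpha$, together with the symmetry $\mathcal{B}(x,y)=\mathcal{B}(y,x)$.

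The only point needing care is convergence, and it is also where the hypothesis $0<\alpha,\beta<1$ is used. Split $[0,1]$ at $\theta=\tfrac12$. Near $\theta=0$ the integrand is comparable to $\theta^{-\beta}$, which is integrable because $\beta<1$. Near $\theta=1$ it is comparable to $(1-\theta)^{-\alpha}$, which is integrable because $\alpha<1$.

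This shows $\mathcal{B}(1-\alpha,1-\beta)$ is finite whenever both arguments are positive. It is moreover bounded uniformly when $\alpha,\beta$ range over compact subsets of $(0,1)$, which is how the lemma will be used in the Fourier splitting estimates. There is no real obstacle beyond this endpoint integrability check.
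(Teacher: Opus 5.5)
Your proof is correct and follows the paper's own argument: the substitution $\tau=t\theta$, followed by the definition (and symmetry) of the Beta function. Your endpoint integrability check, which shows where $0<\alpha,\beta<1$ is needed, is a detail the paper leaves implicit.
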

\begin{proof}
The proof follows from the change of variables in integrations, and the definition of the beta function.
\end{proof}

\begin{lemma}\label{lem:Fourier_splitting}
Let $\epsilon\geq0$ and $f$ be a smooth function satisfying 
\[
\frac{d}{dt}\|f(t)\|_{L^2}^2+\|\nabla f(t)\|_{L^2}^2\leq \epsilon(1+t)^{-1-\alpha}
\]
for all $t>0$. Suppose there exists a positive constant $C_0>0$ and $\sigma<\frac{d}{2}$ such that 
\[
\sup_{t\in[0,\infty)}\|f(t)\|_{\mathcal{Y}^{\sigma}_l}\leq C_0.
\]
Then,
\[
\|f(t)\|_{L^2}^2\leq C(1+t)^{-\frac{d}{2}+\sigma}+C\epsilon(1+t)^{-\alpha}
\]
for all $t\geq t_0$ for some $t_0\geq 0$.
\end{lemma}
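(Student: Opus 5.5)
The plan is the classical Schonbek Fourier splitting argument, with the pointwise low-frequency bound on $\widehat f$ now supplied by the $\mathcal{Y}^\sigma_l$ hypothesis instead of an $L^1$ bound. Throughout, $C$ denotes a constant depending on $d,\sigma,C_0,\alpha$ and $\|f(0)\|_{L^2}$.

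\textbf{Step 1 (splitting).} Let $\rho(t)>0$ be a radius to be chosen and $S(t):=\{\xi:|\xi|\le\rho(t)\}$. By Plancherel,
\[
\|\nabla f\|_{L^2}^2\ge\rho^2\int_{S(t)^c}|\widehat f|^2\,d\xi=\rho^2\|f\|_{L^2}^2-\rho^2\int_{S(t)}|\widehat f|^2\,d\xi .
\]
Inserting this into the hypothesis gives
\[
\frac{d}{dt}\|f\|_{L^2}^2+\rho^2\|f\|_{L^2}^2\le\rho^2\int_{S(t)}|\widehat f(t,\xi)|^2\,d\xi+\epsilon(1+t)^{-1-\alpha}.
\]

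\textbf{Step 2 (low-frequency bound).} Once $\rho(t)\le1$, the uniform bound $\sup_t\|f(t)\|_{\mathcal{Y}^\sigma_l}\le C_0$ gives $|\widehat f(t,\xi)|\le C_0|\xi|^{-\sigma}$ on $S(t)$. Since $\sigma<\frac d2$, the function $|\xi|^{-2\sigma}$ is integrable near the origin, so
\[
\int_{S(t)}|\widehat f|^2\,d\xi\le C_0^2\int_{|\xi|\le\rho}|\xi|^{-2\sigma}\,d\xi=C\rho^{d-2\sigma}.
\]
This integrability is the only place $\sigma<\frac d2$ is used.

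\textbf{Step 3 (choice of $\rho$ and integration).} Take $\rho(t)^2=\frac{m}{1+t}$, with $m$ large, say $m>\max\{\frac d2-\sigma,\,-\alpha\}$. Then $\rho\le1$ for $t\ge t_0:=m-1$, which is where the restriction $t\ge t_0$ in the statement comes from. Multiply the inequality by the integrating factor $(1+t)^m$:
\[
\frac{d}{dt}\bigl[(1+t)^m\|f\|_{L^2}^2\bigr]\le C(1+t)^{m-1-\frac d2+\sigma}+\epsilon(1+t)^{m-1-\alpha}.
\]
Integrate from $t_0$ to $t$ and divide by $(1+t)^m$. Because $m$ exceeds both critical exponents, the two time integrals produce $(1+t)^{m-\frac d2+\sigma}$ and $(1+t)^{m-\alpha}$ without logarithms. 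This yields
\[
\|f(t)\|_{L^2}^2\le\Bigl(\frac{1+t_0}{1+t}\Bigr)^m\|f(t_0)\|_{L^2}^2+C(1+t)^{-\frac d2+\sigma}+C\epsilon(1+t)^{-\alpha}.
\]

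\textbf{Step 4 (initial term).} The hypothesis with $\epsilon\ge0$ shows $\|f(t_0)\|_{L^2}^2\le\|f(0)\|_{L^2}^2+\epsilon C$. Since $m>\frac d2-\sigma$, the first term decays like $(1+t)^{-m}$, faster than $(1+t)^{-\frac d2+\sigma}$, so it is absorbed into the claimed bound. The $\epsilon$-part of this term is absorbed into $C\epsilon(1+t)^{-\alpha}$ because $m>-\alpha$ as well. If $\alpha\le0$, the $\epsilon$ integral is handled by the same power computation with $m$ chosen large enough.

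The main obstacle is the interplay in Step 3: $m$ must dominate both exponents, and one has to ensure that $\rho\le1$ on the whole range $t\ge t_0$, since the $\mathcal{Y}^\sigma_l$ information is only available for $|\xi|\le1$.
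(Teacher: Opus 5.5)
Your proposal is correct and takes essentially the paper's approach. The paper omits the proof as a slight modification of \cite[Lemma 2.1]{BJS}, which is Schonbek's Fourier splitting with the uniform $\mathcal{Y}^\sigma_l$ bound giving $\int_{|\xi|\le\rho(t)}|\widehat f|^2\,d\xi\le C\rho^{d-2\sigma}$ once $\rho(t)\le 1$; that is exactly your argument. One sign slip: for the forcing term you need $m>\alpha$, not $m>-\alpha$. This is required both to get $\int_{t_0}^t(1+s)^{m-1-\alpha}\,ds\le (1+t)^{m-\alpha}/(m-\alpha)$ in Step 3 and to absorb $(1+t)^{-m}$ into $(1+t)^{-\alpha}$ in Step 4. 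Since you take $m$ large anyway, choosing $m>\max\{\frac d2-\sigma,\alpha\}$ fixes it and removes the need for a separate $\alpha\le 0$ case.
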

\begin{proof}
The proof is a slight modification of \cite[Lemma 2.1]{BJS} and hence we omit the details.
\end{proof}

We also provide the $L^2$ decay of derivatives to Leray-Hopf weak soltuions.
\begin{proposition}\label{prop:higher-decay}
Let $d=2, 3$, and let $(u,B)$ be a global Leray-Hopf weak solution of \eqref{HMHD} corresponding to $(u_0,B_0)\in L^2_\sigma(\R^d)$. Suppose that there exists $t_0\ge0$ such that $(u,B)$ is strong (and so smooth) on $[t_0,\infty)$. Then for each $k\in\N$
\[
\|\nabla^k u(t)\|_{L^2}^2 + \|\nabla^k B(t)\|_{L^2}^2\leq C_k(1+t-t_0)^{-k}\quad \text{for all $t\ge t_0$.}
\]
Additionally if $\|B(t)\|_{L^2}^2\le C(1+t)^{-\mu}$ for all $t\ge 0$ and $\mu>0$, then there exists $T_0\ge t_0$ such that
\[
\|\nabla^k B(t)\|_{L^2}^2\leq C_k(1+t-T_0)^{-k-\mu}\quad \text{for all $t\ge T_0$.}
\]
\end{proposition}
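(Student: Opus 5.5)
We prove Proposition \ref{prop:higher-decay} by combining energy estimates with time weights, an induction on $k$, and the $L^2$ decay of Proposition \ref{prop:non-uniform}.

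\emph{Step 1: uniform smallness.} By Proposition \ref{prop:non-uniform} and the energy inequality \eqref{energy ineq}, we can choose $t_1\ge t_0$ such that $\|u(t_1)\|_{L^2}^2+\|B(t_1)\|_{L^2}^2$ and $\|\nabla u(t_1)\|_{L^2}^2+\|\nabla B(t_1)\|_{L^2}^2$ are as small as we like. In $d=2$ the smallness of $\|\nabla B\|_{L^2}$ then propagates for all $t\ge t_1$ by the a priori estimate in the proof of Proposition \ref{prop:small-data-gwp}. In $d=3$ we argue analogously, using $\dot H^{3/2}$-type control of $B$; since the solution is assumed strong on $[t_0,\infty)$, a continuity argument gives smallness of a scaling-critical norm of $B$ for $t\ge t_1$. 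This lets the Hall term be absorbed into the dissipation at every derivative level:
\[
\Big|\int \nabla^k B\cdot\nabla^k\nabla\times(J\times B)\Big|\le C\|B\|_{\text{crit}}\|\nabla^{k+1}B\|_{L^2}^2 + C_k(\text{lower order}).
\]

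\emph{Step 2: higher-order energy inequalities.} Write $E_k=\|\nabla^k u\|_{L^2}^2+\|\nabla^k B\|_{L^2}^2$. Using Step 1, the commutator structure, and Gagliardo–Nirenberg interpolation, we derive for $t\ge t_1$
\[
\frac{d}{dt}E_k+E_{k+1}\le C_k\,G(t)\,E_k+C_k(\text{terms in }E_j,\ j<k),
\]
where $G(t)$ is integrable on $[t_1,\infty)$; for instance $G=\|\nabla u\|_{L^2}^2+\|\nabla u\|_{L^4}^2+\|\Delta B\|_{L^2}^2$, which is integrable by \eqref{bound_omega}-type bounds and the proof of Proposition \ref{prop:small-data-gwp}.

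\emph{Step 3: time-weighted induction.} Multiply by $(t-t_1)^k$ and integrate. The weight derivative produces $k(t-t_1)^{k-1}E_k$, which is controlled by the induction hypothesis $\int_{t_1}^\infty (t-t_1)^{k-1}E_k\,dt<\infty$. That hypothesis comes from integrating the level-$(k-1)$ inequality; the base case is the energy inequality itself. Gronwall with the integrable $G$ then gives $E_k(t)\le C_k(t-t_1)^{-k}$. Combining this with the bound on a compact interval, obtained from local smoothing, and shifting $t_1\to t_0$ yields $(1+t-t_0)^{-k}$.

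\emph{Step 4: enhanced decay of $B$.} If $\|B(t)\|_{L^2}^2\le C(1+t)^{-\mu}$, we redo Step 3 for the $B$-equation alone, with weight $(t-T_0)^{k+\mu}$. For $k=1$, the weight-derivative term is $(t-T_0)^{\mu}\|\nabla B\|_{L^2}^2$, and it is controlled by integrating the $L^2$ equation of $B$ against $(t-T_0)^{\mu}$, which in turn uses the assumed decay of $\|B\|_{L^2}^2$. The coupling terms $\nabla\times(u\times B)$ are bounded by $\|\nabla^j u\|\,\|\nabla^{k+1-j}B\|$, and using the Step 3 decay of $u$ they contribute integrable multiples of the weighted $B$ norms.

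\emph{Main obstacle.} The hardest step is Step 1 in $d=3$: the Hall term at the $\dot H^k$ level needs a small critical norm of $B$, not merely $\nabla B$ in $L^2$. I would handle it by bounding $\|\nabla B\|_{L^\infty}$-type quantities by interpolation between the decaying $E_1$ and bounded higher norms on the strong interval. An alternative is to take $T_0$ large so that $\|B\|_{\dot H^{3/2}}$ is small, then bootstrap.
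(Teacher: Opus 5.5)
The gap is in Step 4. Your Steps 1--3 are essentially the ``standard argument for nonlinear heat equations'' that the paper invokes without details for the first part. (In $d=3$ your Step 1 is only sketched, and it really rests on global-in-time bounds such as $\nabla B\in L^2(t_0,\infty;H^2)$ being part of ``strong''.)

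With the weight $(t-T_0)^{1+\mu}$ you need $\int_{T_0}^{\infty}(s-T_0)^{\mu}\|\nabla B(s)\|_{L^2}^2\,ds<\infty$. Integrating the $L^2$ equation of $B$ against $(t-T_0)^{\mu}$ instead produces $\mu\int_{T_0}^{t}(s-T_0)^{\mu-1}\|B(s)\|_{L^2}^2\,ds$. Under $\|B(s)\|_{L^2}^2\le C(1+s)^{-\mu}$ this is only $O(\log t)$.

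This is not an artifact of crude estimates. Already for the heat equation one has the identity $2\int_0^t s^{\mu}\|\nabla B\|_{L^2}^2\,ds=\mu\int_0^t s^{\mu-1}\|B\|_{L^2}^2\,ds-t^{\mu}\|B(t)\|_{L^2}^2$. If $\|B(s)\|_{L^2}^2$ decays exactly like $s^{-\mu}$, which the hypothesis allows and which is the typical situation for data in $\mathcal{Y}^{\sigma_2}_l$ in Section 4, the weighted dissipation really grows like $\log t$. So your scheme gives at best $\|\nabla B(t)\|_{L^2}^2\lesssim(1+t)^{-1-\mu}\log(e+t)$, and the logarithms accumulate along the induction in $k$.

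The contrast with Step 3 is this. There the base case $\int_{t_1}^\infty E_1\,dt<\infty$ is an integrability statement supplied by the energy inequality. A pointwise rate $\|B\|_{L^2}^2\lesssim t^{-\mu}$ gives no integrability at the borderline weight.

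The paper avoids this with a nonlinear argument. It first absorbs the Hall term using smallness of $\|\nabla B\|_{L^d}$, which it deduces from the decay in the first part. It bounds the coupling by $C\|\nabla u\|_{L^2}^{8/(4-d)}\|B\|_{L^2}^2\le C(1+t-T_0)^{-2-\mu}$. It then uses $\|\nabla B\|_{L^2}^4\le C\|B\|_{L^2}^2\|\Delta B\|_{L^2}^2$ to convert the dissipation into the damping term $C_1(1+t-T_0)^{\mu}\|\nabla B\|_{L^2}^4$. Setting $f=(1+t-T_0)^{\mu}\|\nabla B\|_{L^2}^2$ gives $f'\le -C_3f^2+C_4(1+t-T_0)^{-2}$. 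Then $F=f+2C_4(1+t-T_0)^{-1}$ satisfies $F'\le -cF^2$, hence $F\lesssim(1+t-T_0)^{-1}$ with no logarithm.

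If you want to stay with weighted energies, take a weight $(t-T_0)^m$ with $m>1+\mu$ strictly. Treat the weight-derivative term by interpolation rather than integrability: $m(t-T_0)^{m-1}\|\nabla B\|_{L^2}^2\le\frac12(t-T_0)^m\|\Delta B\|_{L^2}^2+\frac{m^2}{2}(t-T_0)^{m-2}\|B\|_{L^2}^2$. The last term integrates to $O((t-T_0)^{m-1-\mu})$ precisely because $m-1-\mu>0$, which gives the sharp rate.
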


\begin{proof}
The first part of Proposition \ref{prop:higher-decay} follows by the standard argument for nonlinear heat equations, using the Sobolev interpolation theory. Hence, we only deal with the second part: 
\[
\|\nabla^k B(t)\|_{L^2}^2\leq C_k(1+t-T_0)^{-k-\mu}\quad \text{for all $t\ge T_0$,}
\]
provided that $\|B(t)\|_{L^2}^2\le C(1+t)^{-\mu}$.

Since the proof for $k\ge 2$ is analogous, we only consider $k=1$. By the energy estimates on $B$, we have
\[
\begin{aligned}
\frac{1}{2}\frac{d}{dt}\|\nabla B\|_{L^2}^2+\|\Delta B\|_{L^2}^2 &=\int_{\R^d} \Delta B\cdot(u\cdot{\nabla}B-B\cdot{\nabla}u)\,dx+\int_{\R^d}\Delta B\cdot{\nabla}\times(J\times B)\,dx \\
&\le C\left(\|\nabla B\|_{L^d}+\|B\|_{L^\infty}\right)\|\nabla u\|_{L^2}\|\Delta B\|_{L^2}+ C\|\nabla B\|_{L^d}\|\Delta B\|_{L^2}^2 \\
&\le C\|\nabla u\|_{L^2}\|B\|_{L^2}^{1-\frac{d}{4}}\|\Delta B\|_{L^2}^{1+\frac{d}{4}}+ C\|\nabla B\|_{L^d}\|\Delta B\|_{L^2}^2 \\
&\le C\|\nabla u\|_{L^2}^{\frac{8}{4-d}}\|B\|_{L^2}^2+\left(\frac{1}{4}+C_0\|\nabla B\|_{L^d}\right)\|\Delta B\|_{L^2}^2
\end{aligned}
\]
for all $t\ge t_0$. The first part of Proposition \ref{prop:higher-decay} implies that there exists $T_0\ge t_0$ such that
\[
\|\nabla B(t)\|_{L^d} \le \frac{1}{4C_0} \quad \mbox{for all } \ t \ge T_0.
\]
Hence, we deduce that
\[
\frac{d}{dt}\|\nabla B\|_{L^2}^2+\|\Delta B\|_{L^2}^2\le C\|\nabla u\|_{L^2}^{\frac{8}{4-d}}\|B\|_{L^2}^2 \le C(1+t-T_0)^{-\frac{4}{4-d}-\mu}\le C(1+t-T_0)^{-2-\mu}\quad \mbox{for all } \ t \ge T_0.
\]
Since 

\[
\|\nabla B\|_{L^2}^4\le C\|B\|_{L^2}^2\|\Delta B\|_{L^2}^2\le C(1+t-T_0)^{-\mu}\|\Delta B\|_{L^2}^2,
\]
we arrive at

\[
\frac{d}{dt}\|\nabla B\|_{L^2}^2+C_1(1+t-T_0)^{\mu}\|\nabla B\|_{L^2}^4\le C_2(1+t-T_0)^{-2-\mu}\quad \mbox{for all } \ t \ge T_0.
\]
By letting $f(t):=(1+t-T_0)^\mu \|\nabla B(t)\|_{L^2}^2$, we get

\[
\begin{aligned}
\frac{df}{dt}&=\mu(1+t-T_0)^{\mu-1}\|\nabla B\|_{L^2}^2+(1+t-T_0)^\mu \frac{d}{dt}\|\nabla B\|_{L^2}^2 \\
&\le \mu(1+t-T_0)^{\mu-1}\|\nabla B\|_{L^2}^2-C_1(1+t-T_0)^{2\mu}\|\nabla B\|_{L^2}^4+C_2(1+t-T_0)^{-2}\\
&\le -C_3 f^2+C_4(1+t-T_0)^{-2}
\end{aligned}
\]
where we used the Young's inequality in the last inequality to get proper constants $C_i>0$. Then we also write $F(t):=f(t)+2C_4(1+t-T_0)^{-1}$ and obtain
\[
\begin{aligned}
\frac{dF}{dt}=\frac{df}{dt}-2C_4(1+t-T_0)^{-2} \le-C_3 f^2-C_4(1+t-T_0)^{-2} \le -C_0 F^2,
\end{aligned}
\]
which implies $F(t)\le C(1+t-T_0)^{-1}$ for all $t\geq T_0$. This yields our desired decay rates 
\[
\|\nabla B(t)\|_{L^2}^2\le C(1+t-T_0)^{-1-\mu}\quad\text{for all } \ t\ge T_0.
\]
\end{proof}

\begin{remark}\upshape
We present some remarks on Proposition \ref{prop:higher-decay}.
\begin{enumerate}
\item For two dimensional Leray-Hopf weak solutions, this proposition is unconditional from Theorem \ref{thm:2D-eventual-regularity}. However, since there is no eventual smoothness of three dimensional Leray-Hopf weak solutions, the proposition is conditional, so one may consider global strong solutions of \eqref{HMHD} for $d=3$, whose global existence is guaranteed for sufficiently small data \cite{Chae Degond Liu, DT22, Tan}

\item In fact, if 
\[
\|u(t)\|_{L^2}^2\le C(1+t)^{-\mu_1},\quad \|B(t)\|_{L^2}^2\le C(1+t)^{-\mu_2}
\]
for all $t\ge 0$ and $\mu_1,\mu_2> 0$, then we can derive
\[
\|\nabla^k u(t)\|_{L^2}^2\leq C_k(1+t-T_0)^{-k-\mu_1},\quad \|\nabla^k B(t)\|_{L^2}^2\leq C_k(1+t-T_0)^{-k-\mu_2}\quad \text{for all $t\ge T_0$.}
\]
The proof is similar to that of Proposition \ref{prop:higher-decay}, but the decay rates of $\|\nabla^k u(t)\|_{L^2}$ may require finitely many iteration of the process. We omit the proof because Proposition \ref{prop:higher-decay} is enough for proving Theorem \ref{thm:2D-decay}.
\end{enumerate}
\end{remark}

\subsection{Proof of Theorem \ref{thm:2D-decay}}

We deduce from Proposition \ref{prop:non-uniform} and Proposition \ref{prop:higher-decay} that for any sufficiently small constant $\delta>0$, there exists $T_0>0$ such that for each $k\in\N$, $(u,B)\in C([T_0,\infty);H^k(\R^2))$ and 
\[
\|u(t)\|_{L^2}+\|B(t)\|_{L^2}\le \delta, \quad \|\nabla^k u(t)\|_{L^2}^2+\|\nabla^k B(t)\|_{L^2}^2\leq C_k(1+t-T_0)^{-k},
\]
for all $t\geq T_0$ and we will provide a specific choice for $\delta>0$ later. 

We also need to check that $\|u(T_0)\|_{\mathcal{Y}^{\sigma_1}_l}, \|B(T_0)\|_{\mathcal{Y}^{\sigma_2}_l}<\infty$ provided that $(u_0,B_0)\in \mathcal{Y}^{\sigma_1}_l(\R^2)\times\mathcal{Y}^{\sigma_2}_l(\R^2)$. 
From the integral form \eqref{Fourier_integral_form}, we have
\[
\begin{aligned}
|\xi|^{\sigma_1}|\widehat{u}(T_0,\xi)| &\le |\xi|^{\sigma_1} e^{-T_0|\xi|^2}|\widehat{u}_0(\xi)|+\int^{T_0}_0|\xi|^{1+\sigma_1}e^{-(T_0-\tau)|\xi|^2}\lt(\|u(\tau)\|_{L^2}^2+\|B(\tau)\|_{L^2}^2\rt)\,d\tau \\
&\le \|u_0\|_{\mathcal{Y}^{\sigma_1}_l}+C\mathcal{E}_0\int^{T_0}_0(T_0-\tau)^{-\frac{1+\sigma_1}{2}}\,d\tau\le \|u_0\|_{\mathcal{Y}^{\sigma_1}_l}+C\mathcal{E}_0 {T_0}^{\frac{1-\sigma_1}{2}},
\end{aligned}
\]
\[
\begin{aligned}
|\xi|^{\sigma_2}|\widehat{B}(T_0,\xi)| &\le |\xi|^{\sigma_2} e^{-T_0|\xi|^2}|\widehat{B}_0(\xi)|+\int^{T_0}_0|\xi|^{1+\sigma_2}e^{-(T_0-\tau)|\xi|^2}\|u(\tau)\|_{L^2}\|B(\tau)\|_{L^2}\,d\tau\\
&\quad+\int^{T_0}_0|\xi|^{2+\sigma_2}e^{-(T_0-\tau)|\xi|^2} \|B(\tau)\|_{L^2}^2\,d\tau \\
&\le \|B_0\|_{\mathcal{Y}^{\sigma_2}_l}+C\mathcal{E}_0\int^{T_0}_0(T_0-\tau)^{-\frac{1+\sigma_2}{2}}\,d\tau\le \|B_0\|_{\mathcal{Y}^{\sigma_2}_l}+C\mathcal{E}_0 {T_0}^{\frac{1-\sigma_2}{2}},
\end{aligned}
\]
for $|\xi|\le 1$, $\sigma_1,\sigma_2<1$ and $\mathcal{E}_0:=\|u_0\|_{L^2}^2+\|B_0\|_{L^2}^2$. Hence, we obtain
\[
\|u(T_0)\|_{\mathcal{Y}^{\sigma_1}_l}\le \|u_0\|_{\mathcal{Y}^{\sigma_1}_l}+C\mathcal{E}_0 T^{\frac{1-\sigma_1}{2}}_0<\infty,\quad \|B(T_0)\|_{\mathcal{Y}^{\sigma_2}_l}\le \|B_0\|_{\mathcal{Y}^{\sigma_2}_l}+C\mathcal{E}_0 T^{\frac{1-\sigma_2}{2}}_0<\infty.
\]

\noindent $\bullet$ (\textbf{Case 1}: $B_0\in\mathcal{Y}^{\sigma_2}_l$ for $\sigma_2\in(-1,1)$)  Since $(u,B)$ is smooth for all $t\ge T_0$, we consider the integral form of \eqref{2.5HMHD} with respect to $B$ starting from $t=T_0$:
\[
B(t,x)= e^{(t-T_0)\Delta}B(T_0,x)+\int^t_{T_0}e^{(t-\tau)\Delta}\widetilde{\nabla}\times\left(u\times B-B\cdot\widetilde{\nabla} B\right)(\tau,x)\,d\tau.
\]
By using Lemma \ref{lem:heat_kernel}, we obtain
\[\begin{aligned}
\|B(t)\|_{L^2} &\leq C (t-T_0)^{-\frac{1-\sigma_2}{2}}\lt(\|B(T_0)\|_{\mathcal{Y}^{\sigma_2}_l}+\|B(T_0)\|_{L^2}\rt)+\int^t_{T_0}\lt\|e^{(t-\tau)\Delta}\widetilde{\nabla}\times(u\times B-B\cdot\widetilde{\nabla} B)(\tau)\rt\|_{L^2}\,d\tau\\
&\le C(T_0) (t-T_0)^{-\frac{1-\sigma_2}{2}}+\int^t_{T_0} (t-\tau)^{-\frac{1}{p}}\lt(\|(u\times B)(\tau)\|_{L^p}+\|(B\cdot\nabla B)(\tau)\|_{L^p}\rt)\,d\tau \\
&\le C(T_0) (t-T_0)^{-\frac{1-\sigma_2}{2}}+\int^t_{T_0} (t-\tau)^{-\frac{1}{p}}\lt(\|u(\tau)\|_{L^{\frac{2p}{2-p}}}+\|\nabla B(\tau)\|_{L^{\frac{2p}{2-p}}}\rt)\|B(\tau)\|_{L^2}\,d\tau
\end{aligned}\]
for all $t >T_0$, where $1<p<\min{(\frac{2}{1-\sigma_2},2)}$. Since
\[\begin{aligned}
& \|u(t)\|_{L^{\frac{2p}{2-p}}}\leq \|u(t)\|_{L^2}^{\frac{2}{p}-1}\|\nabla u(t)\|_{L^2}^{2-\frac{2}{p}}\leq C\delta^{\frac{2}{p}-1}(1+t-T_0)^{-1+\frac{1}{p}},\\
& \|\nabla B(t)\|_{L^{\frac{2p}{2-p}}}\leq \|B(t)\|_{L^2}^{\frac{1}{p}-\frac{1}{2}}\|\nabla^2 B(t)\|_{L^2}^{\frac{3}{2}-\frac{1}{p}}\leq C\delta^{\frac{1}{p}-\frac{1}{2}}(1+t-T_0)^{-\frac{3}{2}+\frac{1}{p}},
\end{aligned}\]
we arrive at
\[\begin{aligned}
& (t-T_0)^{\frac{1-\sigma_2}{2}}\|B(t)\|_{L^2} \\
&\leq C(T_0)+C\delta^{\frac{1}{p}-\frac{1}{2}}(t-T_0)^{\frac{1-\sigma_2}{2}}\int^t_{T_0}(t-\tau)^{-\frac{1}{p}}(\tau-T_0)^{-1+\frac{1}{p}}\|B(\tau)\|_{L^2}\,d\tau \\
&\leq C(T_0)+C\delta^{\frac{1}{p}-\frac{1}{2}}\sup_{\tau\in[T_0,t]}\lt[(\tau-T_0)^{\frac{1-\sigma_2}{2}}\|B(\tau)\|_{L^2}\rt] (t-T_0)^{\frac{1-\sigma_2}{2}}\int^t_{T_0}(t-\tau)^{-\frac{1}{p}}(\tau-T_0)^{-1+\frac{1}{p}-\frac{1-\sigma_2}{2}}\,d\tau \\
&\leq C(T_0)+C_0\delta^{\frac{1}{p}-\frac{1}{2}}\sup_{\tau\in[T_0,t]}\lt[(\tau-T_0)^{\frac{1-\sigma_2}{2}}\|B(\tau)\|_{L^2}\rt],
\end{aligned}\]
where we used Lemma \ref{lem:time_convolution} in the last inequality.
Hence, by choosing $\delta^{\frac{1}{p}-\frac{1}{2}}<\frac{1}{2C_0}$ we derive
\[
\sup_{\tau\in[T_0,t]}\lt[(\tau-T_0)^{\frac{1-\sigma_2}{2}}\|B(\tau)\|_{L^2}\rt]\le C(T_0).
\]
Together with \eqref{energy ineq}, we get the desired decay rates of \textbf{Case 1}. \\

\noindent $\bullet$ (\textbf{Case 2}: $u_0\in\mathcal{Y}^{\sigma_1}_l$ and $B_0\in\mathcal{Y}^{\sigma_2}_l$ for $\sigma_1, \sigma_2\in[-1,1)\times [-1,1)$ except for $(\sigma_1,\sigma_2)=(-1,0)$) 
For this case, we split the proof into three subcases:\\

\noindent $\diamond$ (Case 2-1: $\sigma_1,\sigma_2\in (-1,1)\times (-1,1)$) Since we have the desired decay rates of $B$ in this case from \textbf{Case 1}, we only consider the decay rates of $u$. Similar to $B$ in \textbf{Case 1}, we consider the integral form of \eqref{HMHD} with respect to $u$ starting from $t=T_0$:
\[
u(t,x)=e^{(t-T_0)\Delta}u(T_0,x)-\int^t_{T_0}e^{(t-\tau)\Delta}\mathbb{P}\widetilde{\nabla}\cdot(u \otimes u-B\otimes B)(\tau,x)\,d\tau.
\]
From the integral form, we obtain
\begin{align*}
\|u(t)\|_{L^2} &\le \|e^{(t-T_0)\Delta}u(T_0,x)\|_{L^2}+\int^t_{T_0}\lt\|e^{(t-\tau)\Delta}\mathbb{P}\widetilde{\nabla}\cdot(u\otimes u- B\otimes B)(\tau)\rt\|_{L^2}\,d\tau \\
&\le \|e^{(t-T_0)\Delta}u(T_0,x)\|_{L^2}+\int^t_{T_0}(t-\tau)^{-\frac{1}{p}}\lt(\|u(\tau)\|_{L^{2p}}^2+\|B(\tau)\|_{L^{2p}}^2\rt)\,d\tau
\end{align*}
for any $1<p<2$. Since
\begin{align*}
& \|u(t)\|_{L^{2p}}^2\le \|u(t)\|_{L^2}^{\frac{2}{p}}\|\nabla u(t)\|_{L^2}^{2-\frac{2}{p}}\le C\delta^{\frac{2}{p}-1}(1+t-T_0)^{-1+\frac{1}{p}}\|u(t)\|_{L^2}, \\
& \|B(t)\|_{L^{2p}}^2\le \|B(t)\|_{L^2}^{\frac{2}{p}}\|\nabla B(t)\|_{L^2}^{2-\frac{2}{p}}\le C(1+t-T_0)^{-1+\frac{1}{p}-1+\sigma_2},
\end{align*}
we have
\begin{align*}
\|u(t)\|_{L^2} &\le \|e^{(t-T_0)\Delta}u(T_0,x)\|_{L^2}+C\delta^{\frac{2}{p}-1}\int^t_{T_0}(t-\tau)^{-\frac{1}{p}}(1+\tau-T_0)^{-1+\frac{1}{p}}\|u(\tau)\|_{L^2}\,d\tau \\
&\quad +C\int^t_{T_0}(t-\tau)^{-\frac{1}{p}}(1+\tau-T_0)^{-1+\frac{1}{p}-1+\sigma_2}\,d\tau.
\end{align*}
When $\sigma_1\ge -1+2\sigma_2$, using Lemma \ref{lem:heat_kernel} we obtain
\[
\|e^{(t-T_0)\Delta}u(T_0,x)\|_{L^2}\le C(t-T_0)^{-\frac{1-\sigma_1}{2}}\lt(\|u(T_0)\|_{\mathcal{Y}^{\sigma_1}_l}+\|u(T_0)\|_{L^2}\rt).
\]
Then, we choose $p<\frac{2}{1-\sigma_1}$ to get
\[
\int^t_{T_0}(t-\tau)^{-\frac{1}{p}}(1+\tau-T_0)^{-1+\frac{1}{p}-\frac{1}{2}+\frac{\sigma_1}{2}}\le (t-T_0)^{-\frac{1-\sigma_1}{2}},
\]
which implies
\begin{equation}\label{case2-1-1}
(t-T_0)^{\frac{1-\sigma_1}{2}}\|u(t)\|_{L^2}\le C(T_0)+C_0\delta^{\frac{2}{p}-1}\sup_{\tau\in[T_0,t]}\lt[(\tau-T_0)^{\frac{1-\sigma_1}{2}}\|u(\tau)\|_{L^2}\rt].
\end{equation}
Similarly when $\sigma_1<-1+2\sigma_2$, using Lemma \ref{lem:heat_kernel} we obtain
\[
\|e^{(t-T_0)\Delta}u(T_0)\|_{L^2}\le C(t-T_0)^{-1+\sigma_2}\left(\|u(T_0)\|_{\mathcal{Y}^{-1+2\sigma_2}_l}+\|u(T_0)\|_{L^2}\right)\le C(t-T_0)^{-1+\sigma_2}\left(\|u(T_0)\|_{\mathcal{Y}^{\sigma_1}_l}+\|u(T_0)\|_{L^2}\right).
\]
Then, we choose $p<\frac{1}{1-\sigma_2}$ for $0<\frac{1+\sigma_1}{2}<\sigma_2<1$ to get
\[
\int^t_{T_0}(t-\tau)^{-\frac{1}{p}}(\tau-T_0)^{-1+\frac{1}{p}-1+\sigma_2}\le C(t-T_0)^{-1+\sigma_2},
\]
which implies
\begin{equation}\label{case2-1-2}
(t-T_0)^{1-\sigma_2}\|u(t)\|_{L^2}\le C(T_0)+C_0\delta^{\frac{2}{p}-1}\sup_{\tau\in[T_0,t]}\lt[(\tau-T_0)^{1-\sigma_2}\|u(\tau)\|_{L^2}\rt].
\end{equation}
Hence, by choosing $\delta^{\frac{2}{p}-1}<\frac{1}{2C_0}$ in \eqref{case2-1-1} and \eqref{case2-1-2} respectively, we derive
\[
\sup_{\tau\in[T_0,t]}\lt[(\tau-T_0)^{\frac{1}{2}-\frac{1}{2}\max{(\sigma_1,-1+2\sigma_2)}}\|u(\tau)\|_{L^2}\rt]\le C(T_0),
\]
which implies the desired decay rates of $u$.\\

\noindent $\diamond$ (Case 2-2: $\sigma_1=-1$ and $\sigma_2\in (-1,0)\cup (0,1)$)
When $\sigma_1=-1$ and $\sigma_2>0$, then $u_0\in \mathcal{Y}^{-1}_l\hookrightarrow \mathcal{Y}^{-1+2\sigma_2}_l$ and we deduce the desired decay rates
\[
\|u(t)\|_{L^2}^2\le C(1+t)^{-1+(-1+2\sigma_2)}
\]
from Case 2-1. Thus, we only consider $\sigma_2\le 0$. 

When $\sigma_1=-1$ and $-1<\sigma_2\le 0$, then $u_0\in \mathcal{Y}^{-1}_l\hookrightarrow \mathcal{Y}^{-1+\epsilon}_l$ for $0<\epsilon<1+\sigma_2\le 1$. From Case 2-1, we have
\begin{equation}\label{case2-2:u_decay}
\|u(t)\|_{L^2}^2\le C(1+t)^{-2+\epsilon}.
\end{equation}
We also recall from \textbf{Case 1} and Proposition \ref{prop:higher-decay} that there exists $T_0>0$ such that for all $t\geq T_0$
\begin{equation}\label{case2-2:B_decay}
\|B(t)\|_{L^2}^2\le C(1+t)^{-1+\sigma_2},\quad 
\|\nabla B(t)\|_{L^2}^2\leq C(1+t-T_0)^{-2+\sigma_2}.
\end{equation}
This implies
\[
\frac{1}{2}\frac{d}{dt}\|u\|_{L^2}^2+\|\nabla u\|_{L^2}^2=\int_{\R^2} (B\cdot\nabla B)\cdot u\,dx\leq \|\nabla u\|_{L^2}\|B\|_{L^4}^2  \leq \frac{1}{2}\|\nabla u\|_{L^2}^2+C\|B\|_{L^2}^2\|\nabla B\|_{L^2}^2,
\]
and hence
\begin{equation}\label{case2-2:u_ineq}
\frac{d}{dt}\|u(t)\|_{L^2}^2+\|\nabla u(t)\|_{L^2}^2\leq C(1+t-T_0)^{-3+2\sigma_2}, \quad\text{for all $t>T_0$.}
\end{equation}
Moreover, for $|\xi|\leq1$ and $t\ge T_0$ we have
\[\begin{aligned}
|\xi|^{-1}|\widehat{u}(t,\xi)| &\le |\xi|^{-1} e^{-t|\xi|^2}|\widehat{u}(T_0,\xi)|+\int^t_{T_0} e^{-(t-\tau)|\xi|^2}\lt(\|u(\tau)\|_{L^2}^2+\|B(\tau)\|_{L^2}^2\rt)\,d\tau \\
&\le \|u(T_0)\|_{\mathcal{Y}^{-1}_l}+C\int^t_{T_0} (1+\tau)^{-1+\sigma_2}\,d\tau \le C
\end{aligned}\]
for $\sigma_2<0$. So we deduce from Lemma \ref{lem:Fourier_splitting} that for $\sigma_1=-1$ and $\sigma_2<0$
\[
\|u(t)\|_{L^2}^2\leq C(1+t-T_0)^{-2}+C(1+t-T_0)^{-2+2\sigma_2}\leq C(1+t-T_0)^{-2}.
\]

\vspace{0.1cm}

\noindent $\diamond$ (Case 2-3: $\sigma_1\in[-1,1)$ and $\sigma_2=-1$)
Since $B_0\in\mathcal{Y}^{-1}_l \hookrightarrow \mathcal{Y}^{-1+\epsilon}_l$ for sufficiently small $\epsilon>0$, the decay rates in Case 2-1 and Case 2-2 imply that
\[
\|u(t)\|_{L^2}^2\le C(1+t)^{-1+\sigma_1},\quad \|B(t)\|_{L^2}^2\le C(1+t)^{-2+\epsilon}\quad \text{for all $t\ge0$.}
\]
Thus, we only consider the decay rates of $B$.
We first define a new function
\[
M(t,x)= B(t,x)e^{-2\lambda\int^t_0 \|\nabla u(\tau)\|_{L^2}^2\,d\tau},\quad\text{for all $t\ge T_0$,}
\]
where $\lambda>0$ will be chosen later.
Since $2\int^t_0\|\nabla u(\tau)\|_{L^2}^2\,d\tau\le \mathcal{E}_0:=\|u_0\|_{L^2}^2+\|B_0\|_{L^2}^2$ from \eqref{energy ineq}, we have
\[
e^{-\lambda\mathcal{E}_0}|B(t,x)|\le |M(t,x)| \le |B(t,x)|,\quad e^{-\lambda\mathcal{E}_0}|\widehat{B}(t,\xi)|\le |\widehat{M}(t,\xi)| \le |\widehat{B}(t,\xi)|,
\]
for all $t\ge0$ and $x,\xi\in\R^2$. Then, we derive the equation of $M$:
\[
\pa_t M +2\lambda\|\nabla u\|_{L^2}^2 M-\Delta M+(u\cdot\nabla)M-(M\cdot\nabla) u+ e^{2\lambda\int^t_0 \|\nabla u(\tau)\|_{L^2}^2\,d\tau}\nabla\times((\nabla\times M)\times M)=0.
\]
Hence, we obtain
\[
\frac{1}{2}\frac{d}{dt}\|M\|_{L^2}^2+2\lambda\|\nabla u\|_{L^2}^2\|M\|_{L^2}^2+\|\nabla M\|_{L^2}^2 =\int (M\cdot\nabla) u\cdot M \le \frac{1}{2}\|\nabla M\|_{L^2}^2+C_0 \|\nabla u\|_{L^2}^2\|M\|_{L^2}^2,
\]
which implies 
\[
\frac{d}{dt}\|M\|_{L^2}^2+\|\nabla M\|_{L^2}^2\le 0, \quad \text{for all $t\ge T_0$}
\]
by setting $\lambda=C_0/2$.

Now, we investigate the $\mathcal{Y}^{-1}_l$-norm of $B$, which is comparable to that of $M$: for $|\xi|\le 1$ and $t>T_0$
\begin{align*}
|\xi|^{-1}|\widehat{B}(t,\xi)| &\le \|B(T_0)\|_{\mathcal{Y}^{-1}_l}+\int^t_{T_0} e^{-(t-\tau)|\xi|^2}\|u(\tau)\|_{L^2}\|B(\tau)\|_{L^2}\,d\tau+\int^t_{T_0}|\xi|e^{-(t-\tau)|\xi|^2}\|B(\tau)\|_{L^2}^2\,d\tau \\
&\le \|B(T_0)\|_{\mathcal{Y}^{-1}_l}+\int^t_{T_0} (1+\tau)^{-\frac{1-\sigma_1}{2}}(1+\tau)^{-1+\frac{\epsilon}{2}}\,d\tau+\int^t_{T_0}(t-\tau)^{-\frac{1}{2}}(1+\tau)^{-2+\epsilon}\,d\tau \le C
\end{align*}
if we choose $\epsilon<1-\sigma_1$. Therefore, by using Lemma \ref{lem:Fourier_splitting} with respect to $M$, we deduce that 
\[
\|M(t)\|_{L^2}^2\le C(1+t)^{-2},
\]
which implies the desired decay rates of $B$.\\

\noindent $\bullet$ (\textbf{Case 3}: $u_0\in\mathcal{Y}^{\sigma_1}_l$ and $B_0\in\mathcal{Y}^{\sigma_2}_l$ for $\sigma_1=-1$, $\sigma_2=0$) 
In this case, we borrow some inequalities obtained in Case 2-2, which hold when $\sigma_1=-1$, $\sigma_2=0$ as well. We use \eqref{case2-2:u_decay} and \eqref{case2-2:B_decay} to obtain that for $|\xi|\le 1$ and $t\ge T_0$
\[\begin{aligned}
|\xi|^{-1}|\widehat{u}(t,\xi)| &\le |\xi|^{-1} e^{-(t-T_0)|\xi|^2}|\widehat{u}(T_0,\xi)|+\int^t_{T_0} e^{-(t-\tau)|\xi|^2}\lt(\|u(\tau)\|_{L^2}^2+\|B(\tau)\|_{L^2}^2\rt)\,d\tau \\
&\le \|u(T_0)\|_{\mathcal{Y}^{-1}_l}+C\int^t_{T_0} (1+\tau)^{-1}\,d\tau \le C\log{(e+ t)}.
\end{aligned}\]
Slightly changing the proof of Lemma \ref{lem:Fourier_splitting} with \eqref{case2-2:u_ineq}, we derive
\[
\|u(t)\|_{L^2}^2\le C(1+t)^{-2}\lt(\log{(e+t)}\rt)^2.
\]

\subsection{Proof of Theorem \ref{thm:3D-decay}}
By Proposition \ref{prop:non-uniform} and Sobolev interpolation, a global strong solution of \eqref{HMHD} satisfies 
\[
\|u(t)\|_{L^3}\rightarrow 0\quad\text{as $t\rightarrow\infty$.}
\]
Hence, from the second equation of \eqref{HMHD} we easily get 
\begin{equation}\label{energy of B}
\frac{1}{2}\frac{d}{dt}\|B\|_{L^2}^2+\|\nabla B\|_{L^2}^2=\int_{\R^3}\nabla\times(u\times B)\cdot B\,dx\le C\|u\|_{L^3}\|\nabla B\|_{L^2}^2\le \frac{1}{2}\|\nabla B\|_{L^2}^2,\quad\text{for all $t\ge T_0$}
\end{equation}
for some $T_0>0$.\\

\noindent $\bullet$ (\textbf{Case 1}: $B_0\in\mathcal{Y}^{\sigma_2}_l$ for $\sigma_2\in[-1,\frac{3}{2})$) 
Similar to the proof of Theorem \ref{thm:2D-decay}, we consider the integral form of \eqref{HMHD} with respect to $B$ starting from $t=T_0$:
\[
B(t,x)=e^{(t-T_0)\Delta}B(T_0,x)+\int^t_{T_0} e^{(t-\tau)\Delta}\nabla\times(u\times B-B\cdot\nabla B)(\tau,x)\,d\tau.
\]
We deduce from this integral form that for all $|\xi|\le 1$ and $t\ge T_0$ 
\begin{equation}\label{Y_norm of B}
\begin{aligned}
&|\xi|^{\sigma_2}|\widehat{B}(t,\xi)| \\
&\quad\le \|B(T_0)\|_{\mathcal{Y}^{\sigma_2}_l}+\int^t_{T_0} |\xi|^{1+\sigma_2}e^{-(t-\tau)|\xi|^2}\|u(\tau)\|_{L^2}\|B(\tau)\|_{L^2}+\int^t_{T_0}|\xi|^{2+\sigma_2}e^{-(t-\tau)|\xi|^2}\|B(\tau)\|_{L^2}^2\,d\tau \\
&\quad\le \|B(T_0)\|_{\mathcal{Y}^{\sigma_2}_l}+C\sqrt{\mathcal{E}_0}\int^t_{T_0}|\xi|^{1+\sigma_2} e^{-(t-\tau)|\xi|^2}\|B(\tau)\|_{L^2}\,d\tau.
\end{aligned}
\end{equation}
For this case, we split the proof into three subcases:\\

\noindent $\diamond$ (Case 1-1: $\sigma_2\in [1,\frac{3}{2})$) For $|\xi|\le 1$ and $t\ge T_0$, we have
\[
|\xi|^{\sigma_2}|\widehat{B}(t,\xi)|\le \|B(T_0)\|_{\mathcal{Y}^{\sigma_2}_l}+C\mathcal{E}_0\int^t_{T_0}|\xi|^{2}e^{-(t-\tau)|\xi|^2}\,d\tau \le C.
\] 
Hence, we obtain the desired decay rates by Lemma \ref{lem:Fourier_splitting} with \eqref{energy of B}.\\

\noindent $\diamond$ (Case 1-2: $\sigma_2\in (-1,1)$)
For Case 1-2, we first suppose that $\|B(t)\|_{L^2}^2\le C(1+t)^{-1+\sigma_2}$. This assumption follows from Case 1-1 inductively. Then, from \eqref{Y_norm of B}, we have
\[
|\xi|^{\sigma_2}|\widehat{B}(t,\xi)|\le \|B(T_0)\|_{\mathcal{Y}^{\sigma_2}_l}+C\int^t_{T_0}(t-\tau)^{-\frac{1}{2}-\frac{\sigma_2}{2}}(1+\tau)^{-\frac{1}{2}+\frac{\sigma_2}{2}}\,d\tau \le C,
\]
which implies the desired decay rates by Lemma \ref{lem:Fourier_splitting}.\\

\noindent $\diamond$ (Case 1-3: $\sigma_2=-1$)
Since $\mathcal{Y}^{-1+\epsilon}_l\hookrightarrow\mathcal{Y}^{-1}_l$ for any $\epsilon>0$, $\|B(t)\|_{L^2}^2\le C(1+t)^{-\frac{5}{2}+\epsilon}$ provided that $B_0\in\mathcal{Y}^{-1}_l$ from Case 1-2. Again from \eqref{Y_norm of B}, we arrive at
\[
|\xi|^{-1}|\widehat{B}(t,\xi)|\le \|B(T_0)\|_{\mathcal{Y}^{-1}_l}+C\int^t_{T_0}(1+\tau)^{-\frac{5}{4}+\frac{\epsilon}{2}}\,d\tau \le C.
\]
Hence, we have (1) of Theorem \ref{thm:3D-decay}\\

\noindent $\bullet$ (\textbf{Case 2}: $u_0\in\mathcal{Y}^{\sigma_1}_l$ for $\sigma_1\in[1,\frac{3}{2})$) From the first equation of \eqref{HMHD}, we obtain
\[
\frac{1}{2}\frac{d}{dt}\|u\|_{L^2}^2+\|\nabla u\|_{L^2}^2=\int_{\R^3}(B\cdot\nabla B)\cdot u\,dx \le \|B\|_{L^3}\|\nabla B\|_{L^2}\|\nabla u\|_{L^2}\le \frac{1}{2}\|\nabla u\|_{L^2}^2+C\|B\|_{L^2}\|\nabla B\|_{L^2}^3.
\] 
Hence, from Proposition \ref{prop:higher-decay} we have
\begin{equation}\label{energy of u 1}
\frac{d}{dt}\|u\|_{L^2}^2+\|\nabla u\|_{L^2}^2\le C\|B\|_{L^2}\|\nabla B\|_{L^2}^3\le C(1+t)^{-\frac{3}{2}}.
\end{equation}
On the other hand, we consider the integral form of \eqref{HMHD} with respect to $u$ starting from $t=T_0$:

\[
u(t,x)=e^{(t-T_0)\Delta}u(T_0,x)-\int^t_{T_0} e^{(t-\tau)\Delta}\mathbb{P}\nabla\cdot(u\otimes u-B\otimes B)(\tau,x)\,d\tau.
\]
For all $|\xi|\le1$ and $t\ge T_0$
\[\begin{aligned}
|\xi|^{\sigma_1}|\widehat{u}(t,\xi)| &\le \|u(T_0)\|_{\mathcal{Y}^{\sigma_1}_l}+\int^t_{T_0}|\xi|^{1+\sigma_1} e^{-(t-\tau)|\xi|^2}\lt(\|u(\tau)\|_{L^2}^2+\|B(\tau)\|_{L^2}^2\rt)\,d\tau \\
&\le \|u(T_0)\|_{\mathcal{Y}^{\sigma_1}_l}+\mathcal{E}_0\int^t_{T_0}|\xi|^{2} e^{-(t-\tau)|\xi|^2}\,d\tau\le C.
\end{aligned}\]
We deduce from Lemma \ref{lem:Fourier_splitting} with \eqref{energy of u 1} the desired decay rates in (2) of Theorem \ref{thm:3D-decay}.\\

\noindent $\bullet$ (\textbf{Case 3}: $(u_0,B_0)\in\mathcal{Y}^{\sigma_1}_l\times \mathcal{Y}^{\sigma_2}_l$ for $\sigma_1\in[-1,1)$, $\sigma_2\in[-1,\frac{3}{2})$) From \textbf{Case 1} and Proposition \ref{prop:higher-decay}, we have
\begin{equation}\label{energy of u 2}
\frac{d}{dt}\|u\|_{L^2}^2+\|\nabla u\|_{L^2}^2\le C\|B\|_{L^2}\|\nabla B\|_{L^2}^3 \le C(1+t)^{-\frac{9}{2}+2\sigma_2}.
\end{equation}
For this case, we split the proof into two subcases:\\

\noindent $\diamond$ (Case 3-1: $\sigma_1\in[0,1)$)
From \textbf{Case 1} and \textbf{Case 2}, we have
\[
\|u(t)\|_{L^2}^2 + \|B(t)\|_{L^2}^2\le C(1+t)^{-\frac{1}{2}} + C(1+t)^{-\frac{3}{2}+\sigma_2}\le C(1+t)^{-\frac{1}{2}+\frac{1}{2}\max{\{0,-2+2\sigma_2\}}}.
\]
For the sake of notational convenience, we define $\mu:=\max{\{\sigma_1,-2+2\sigma_2\}}$. Then,
\[\begin{aligned}
|\xi|^{\mu}|\widehat{u}(t,\xi)| &\le |\xi|^{\mu}|\widehat{u}(T_0,\xi)|+\int^{t}_{T_0}|\xi|^{1+\mu} e^{-(t-\tau)|\xi|^2}\lt(\|u(\tau)\|_{L^2}^2+\|B(\tau)\|_{L^2}^2\rt)\,d\tau \\
&\le |\xi|^{\sigma_1}|\widehat{u}(T_0,\xi)|+\int^{t}_{T_0}(t-\tau)^{-\frac{1}{2}-\frac{\mu}{2}}(1+\tau)^{-\frac{1}{2}+\frac{1}{2}\max{\{0,-2+2\sigma_2\}}}\,d\tau \\
&\le \|u(T_0)\|_{\mathcal{Y}^{\sigma_1}_l}+\int^{t}_{T_0}(t-\tau)^{-\frac{1}{2}-\frac{\mu}{2}} (1+\tau)^{-\frac{1}{2}+\frac{\mu}{2}}\,d\tau \le C.
\end{aligned}\]
Hence, by Lemma \ref{lem:Fourier_splitting} with \eqref{energy of u 2} we obtain the desired decay. \\

\noindent $\diamond$ (Case 3-2: $\sigma_1\in[-1,0)$)
From \textbf{Case 1} and Case 3-1, we have
\[
\|u(t)\|_{L^2}^2 + \|B(t)\|_{L^2}^2\le C(1+t)^{-\frac{3}{2}+\max{\{0,-2+2\sigma_2\}}}+C(1+t)^{-\frac{3}{2}+\sigma_2}\le C(1+t)^{-\frac{3}{2}+\max{\{0,\sigma_2\}}}\le C(1+t)^{-\frac{1}{2}+\frac{\mu}{2}},
\]
where we used $\max{\{-2,-2+2\sigma_2\}}\le \max{\{\sigma_1,-2+2\sigma_2\}}=\mu$. Then, for $\mu\neq-1$
\[\begin{aligned}
|\xi|^{\mu}|\widehat{u}(t,\xi)| &\le |\xi|^{\mu}|\widehat{u}(T_0,\xi)|+\int^{t}_{T_0}|\xi|^{1+\mu} e^{-(t-\tau)|\xi|^2}\lt(\|u(\tau)\|_{L^2}^2+\|B(\tau)\|_{L^2}^2\rt)\,d\tau \\
&\le \|u(T_0)\|_{\mathcal{Y}^{\sigma_1}_l}+\int^{t}_{T_0}(t-\tau)^{-\frac{1}{2}-\frac{\mu}{2}} (1+\tau)^{-\frac{1}{2}+\frac{\mu}{2}}\,d\tau \le C,
\end{aligned}\]
which implies the desired decay rates.
When $\mu=-1$, so $\sigma_1=-1$ and $\sigma_2\le \frac{1}{2}$, 
\[\begin{aligned}
|\xi|^{-1}|\widehat{u}(t,\xi)| &\le |\xi|^{-1}|\widehat{u}(T_0,\xi)|+\int^{t}_{T_0}e^{-(t-\tau)|\xi|^2}\lt(\|u(\tau)\|_{L^2}^2+\|B(\tau)\|_{L^2}^2\rt)\,d\tau \\
&\le \|u(T_0)\|_{\mathcal{Y}^{-1}_l}+\int^{t}_{T_0}(1+\tau)^{-\frac{3}{2}+\max{\{0,\sigma_2\}}}\,d\tau.
\end{aligned}\]
This is uniformly bounded when $\sigma_1=-1$ and $\sigma_2<\frac{1}{2}$, and bounded by $\log{(e+t)}$ when $\sigma_1=-1$ and $\sigma_2=\frac{1}{2}$. This completes the proof from Lemma \ref{lem:Fourier_splitting} (or its modification) with \eqref{energy of u 2}.

\section*{Acknowledgments}

The work of J. Jung was supported by NRF grant no. 2022R1A2C1002820.
The work of J. Shin was supported by the National Research Foundation of Korea(NRF) funded by the Ministry of Education(RS-2025-25437873), and the Ministry of Science and ICT(RS-2024-00406821).

\appendix

\section{Integral formulation for Leray-Hopf weak solutions}\label{sec:app.C}

The passage from the weak formulation to the integral formulation (or Duhamel formulation) is standard. Since we use this formulation for Leray-Hopf weak solutions of \eqref{HMHD} whose nonlinear terms are only controlled in the energy class, we include the details for completeness.

Let $(u,B)$ be a global Leray-Hopf weak solution of \eqref{HMHD} for $d=2, 3$. Then, $(u,B)$ solves \eqref{HMHD u} and \eqref{HMHD B} in the sense of distributions. More precisely, for any divergence-free vector field $\Phi\in C^\infty_c([0,\infty)\times \R^d)$
\begin{equation}\label{u dist eq}
-\int^\infty_0\langle u,\pa_t\Phi+\nu\Delta\Phi\rangle\,dt=\langle u_0,\Phi(0)\rangle+\int^\infty_0\int_{\R^d}\lt(u\otimes u-B\otimes B\rt):\nabla\Phi\,dxdt,
\end{equation}
and for any vector field $\Psi\in C^\infty_c([0,\infty)\times \R^d)$
\begin{equation}\label{B dist eq}
-\int^\infty_0\langle B,\pa_t\Psi+\eta\Delta\Psi\rangle\,dt=\langle B_0,\Psi(0)\rangle+\int^\infty_0\langle u\times B,\nabla\times\Psi\rangle\,dt+\int^\infty_0\int_{\R^d} B\otimes B :\nabla(\nabla\times\Psi)\,dxdt
\end{equation}
where $\langle f, g\rangle =\int_{\R^d} f\cdot g\,dx$. For notational simplicity, we write $\nabla$ and $\xi$ throughout this appendix, and for $d=2$, $\nabla$ and $\xi$ should be replaced by $\widetilde\nabla=(\partial_1,\partial_2,0)$ and $\widetilde\xi=(\xi_1,\xi_2,0)$, respectively. 

Although the weak formulation is initially stated for compactly supported test functions in space, \eqref{u dist eq} and \eqref{B dist eq} hold for test functions in $C^\infty_c([0,\infty);\mathcal{S}(\R^d))$. Indeed the energy bound gives $u(t),B(t)\in L^2(\R^d)\subset\mathcal{S}'(\R^d)$ and $(u\otimes u)(t), (B\otimes B)(t), (u\times B)(t) \in L^1(\R^d)\subset\mathcal{S}'(\R^d)$ for almost every $t$.

Now, we choose a proper test function to derive the integral form in the sense of distributions. We first consider \eqref{HMHD u}, the equation of $u$. For a fixed time $t$ (among almost everywhere in $(0,\infty)$), and any divergence-free $\phi\in\mathcal{S}(\R^d)$, we define
\[
\phi_t(\tau)=e^{\nu(t-\tau)\Delta}\phi,\quad 0\le\tau\le t.
\]
Then, for any $\tau\in[0,t]$ $\phi_t$ satisfies
\[
\pa_\tau\phi_t(\tau)+\nu\Delta\phi_t(\tau)=0,\quad\phi_t(t)=\phi,
\]
and $\nabla\cdot\phi_t(\tau)=0$, $\phi_t(\tau)\in\mathcal{S}(\R^d)$.
Let $\chi_\epsilon\in C_c^\infty([0,\infty))$ satisfy
\[
\chi_\epsilon(\tau)=1 \quad\text{for }0\le \tau\le t-\epsilon,
\qquad
\chi_\epsilon(\tau)=0 \quad\text{for }\tau\ge t,
\]
and $0\le \chi_\epsilon\le1$. We set
\[
\Phi_\epsilon(\tau,x):=\chi_\epsilon(\tau)\phi_t(\tau,x).
\]
Then $\Phi_\epsilon\in C_c^\infty([0,\infty);\mathcal S(\mathbb R^d))$
and $\Phi_\epsilon(0,x)=e^{\nu t\Delta}\phi(x)$.
Then we easily get
\[
\pa_\tau\Phi_\epsilon+\nu\Delta\Phi_\epsilon=\chi'_\epsilon(\tau)\phi_t(\tau)+\chi_\epsilon(\tau)\lt(\pa_\tau\phi_t+\nu\Delta\phi_t\rt)=\chi'_\epsilon(\tau)\phi_t(\tau,x).
\]
By substituting $\Phi=\Phi_\epsilon$ into \eqref{u dist eq}, we obtain
\[
-\int^t_0 \chi'_\epsilon(\tau)\langle u(\tau),\phi_t(\tau)\rangle\,d\tau=\langle u_0,\phi_t(0)\rangle+\int^t_0\chi_\epsilon(\tau)\int_{\R^d}\lt(u\otimes u-B\otimes B\rt)(\tau):\nabla\phi_t(\tau)\,dxd\tau.
\]

Here $-\chi_\epsilon'$ is an approximate identity concentrated at the terminal time $t$ from the left. Hence, if $t$ is a Lebesgue point of the scalar function $ \tau\mapsto \langle u(\tau),\phi\rangle$, then 
\[
-\int^t_0 \chi'_\epsilon(\tau)\langle u(\tau),\phi_t(\tau)\rangle\,d\tau\rightarrow \langle u(t), \phi\rangle, \quad\text{as $\epsilon\rightarrow0$.}
\]
Indeed, the dependence of $\phi_t(\tau)\) on $\tau$ is harmless, since $e^{\nu(t-\tau)\Delta}\phi\to\phi$ strongly in $L^2$ as $\tau\uparrow t$. 
The convergence of the nonlinear term follows from dominated convergence, using 
\[
\|u(\tau)\otimes u(\tau)\|_{L^1}+\|B(\tau)\otimes B(\tau)\|_{L^1} \le \mathcal E_0,\quad \sup_{0\le \tau\le t}
\|\nabla e^{\nu(t-\tau)\Delta}\phi\|_{L^\infty}<\infty.
\]
Hence, we deduce the dual form of the integral formulation for almost every $t$
\[
\langle u(t),\phi\rangle =\langle u_0, e^{\nu t\Delta}\phi\rangle+\int^t_0\int_{\R^d}\lt((u\otimes u-B\otimes B)\rt)(\tau):\nabla e^{\nu(t-\tau)\Delta}\phi\,dxd\tau.
\]  
Therefore, we arrive at the integral formulation
\begin{equation}\label{Duhamel u}
u(t)=e^{\nu t\Delta}u_0-\int^t_0 e^{\nu(t-\tau)\Delta}\mathbb{P}\nabla\cdot\lt(u\otimes u-B\otimes B\rt)(\tau)\,d\tau
\end{equation}
in $\mathcal{S}'(\R^d)$ for almost every $t\in (0,\infty)$. 
Similar to $u$, we derive the integral formulation of $B$ from \eqref{HMHD B}: 
\begin{equation}\label{Duhamel B}
B(t)= e^{\eta t\Delta}B_0+\int^t_0 e^{\eta(t-\tau)\Delta}\nabla\times(u\times B-\nabla\cdot(B\otimes B))(\tau)\,d\tau,
\end{equation}
in $\mathcal{S}'(\R^d)$ for almost every $t\in(0,\infty)$. 

Moreover, since the integral formulation is written in $\mathcal{S}'(\R^d)$, we obtain the integral formulation in Fourier form:
\begin{equation}\label{Fourier Duhamel}
\begin{aligned}
\widehat{u}(t,\xi) &=e^{-\nu t|\xi|^2}\widehat{u}_0(\xi)-\int^t_0e^{-\nu(t-\tau)|\xi|^2}m(\xi) i{\xi}\cdot\left(\widehat{u\otimes u}-\widehat{B\otimes B}\right)(\tau,\xi)\,d\tau, \\
\widehat{B}(t,\xi) &=e^{-\eta t|\xi|^2}\widehat{B}_0(\xi)+\int^t_0e^{-\eta(t-\tau)|\xi|^2} i{\xi}\times\left(\widehat{u\times B}-i\xi\cdot(\widehat{B\otimes B})\right)(\tau,\xi)\,d\tau
\end{aligned}
\end{equation}
where $m(\xi)$ is a matrix valued Fourier multiplier of the Leray projection operator $\mathbb{P}$ defined as $m_{ij}(\xi)=\delta_{ij}-\xi_i\xi_j/|\xi|^2$. We note that the nonlinear terms in \eqref{Fourier Duhamel} are well-defined pointwise in $\xi$ for almost every $t$, since $u\otimes u, B\otimes B, u\times B\in L^1(\R^d)$ for almost every $t$. We also note that if $(u,B)$ is smooth near $t=t_0$, then the integral formulation \eqref{Duhamel u}, \eqref{Duhamel B} and its Fourier form \eqref{Fourier Duhamel} hold at $t=t_0$ since $\langle u(t),\phi\rangle$ and $\langle B(t),\phi\rangle$ for any $\phi\in\mathcal{S}(\R^d)$ have $t_0$ as a Lebesgue point for sure.

\section{Proof of Proposition \ref{prop:non-uniform}}\label{sec:app.A}

In this section, we prove Proposition \ref{prop:non-uniform}, the asymptotic $L^2$ decay of Leray-Hopf weak solutions of \eqref{HMHD}. All the arguments in the proof of Proposition \ref{prop:non-uniform} for $d=2$ and $d=3$ are exactly same except for the Sobolev embeddings, so we deal with both cases together. For simplicity, we let $\nu=\eta=1$ throughout this section, and present a lemma whose proof can be deduced from \cite{AS07, DL19}:

\begin{lemma}\label{lem:non-uniform}
Let $d=2$ or $d=3$. For a function $E(t) \in \mc^1(\R; \R_+)$ with $E(t) \ge0 $ and $\varphi(\xi) = e^{-|\xi|^2}$, $\psi(\xi) := 1- \varphi(\xi)$. Then a weak solution $(u,B)$ to \eqref{HMHD} satisfies

\[\begin{aligned}
\|\check \varphi\star u(t)\|_{L^2}^2 &\le \|e^{(t-s)\Delta}\check \varphi \star u(s)\|_{L^2}^2 - 2\int_s^t \langle u \cdot \nabla u(\tau), e^{2(t-\tau)\Delta}\check \varphi \star \check \varphi \star u(\tau)\rangle \,d\tau\\
&\quad + 2 \int_s^t \langle B \cdot \nabla B(\tau), e^{2(t-\tau)\Delta } \check \varphi \star \check \varphi \star u(\tau)\rangle \,d\tau,\\
E(t)\|\psi\hat u(t)\|_{L^2}^2 &\le E(s)\|\psi\hat u(s)\|_{L^2}^2 + \int_s^t E'(\tau) \|\psi \hat u(\tau)\|_{L^2}^2\,d\tau - 2 \int_s^t E(\tau) \| |\xi|\psi \hat u(\tau)\|_{L^2}^2 \,d\tau \\
&\quad -2\int_s^t E(\tau) \langle \widehat {u \cdot \nabla u} (\tau), \psi^2 \hat u(\tau)\rangle\,d\tau + 2\int_s^t E(\tau) \langle \widehat{B \cdot \nabla B}(\tau), \psi^2 \hat u(\tau)\rangle\,d\tau.
\end{aligned}\]
and

\[\begin{aligned}
\|\check \varphi\star B(t)\|_{L^2}^2 &\le \|e^{(t-s)\Delta}\check\varphi\star B(s)\|_{L^2}^2 - 2\int_s^t \langle u \cdot \nabla B(\tau), e^{2(t-\tau)\Delta}\check\varphi\star\check\varphi \star B(\tau)\rangle \,d\tau\\
&\quad + 2\int_s^t \langle B\cdot \nabla u(\tau), e^{2(t-\tau)\Delta}\check\varphi\star\check\varphi\star B(\tau)\rangle \,d\tau\\
&\quad -2\int_s^t \langle \nabla \times (\nabla \cdot (B\otimes B)) (\tau), e^{2(t-\tau)\Delta}\check\varphi\star\check\varphi\star B(\tau)\rangle\,d\tau,\\
E(t)\|\psi \hat B(t)\|_{L^2}^2 &\le E(s)\|\psi \hat B(s)\|_{L^2}^2 + \int_s^t E'(\tau) \|\psi \hat B(\tau)\|_{L^2}^2 \,d\tau - 2\int_s^t E(\tau) \| |\xi|\psi \hat B(\tau)\|_{L^2}^2\,d\tau\\
&\quad - 2\int_s^t E(\tau) \langle \widehat{u \cdot \nabla B}(\tau), \psi^2 \hat B(\tau)\rangle\,d\tau + 2\int_s^t E(\tau) \langle \widehat{B \cdot \nabla u}(\tau), \psi^2 \hat B(\tau)\rangle\,d\tau\\
&\quad - 2\int_s^t E(\tau) \langle \wh{\nabla \times (\nabla \cdot (B\otimes B))}(\tau), \psi^2 \hat B(\tau)\rangle \,d\tau.
\end{aligned}\]
\end{lemma}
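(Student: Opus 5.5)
The plan is to treat the low-frequency and high-frequency inequalities separately. In the low-frequency estimates the multiplier $\varphi$ smooths enough to justify everything directly from the weak formulation. In the high-frequency ones the multiplier $\psi$ does not smooth, so I would go through an approximation. Throughout, $N_u:=u\cdot\nabla u-B\cdot\nabla B$ and $N_B:=u\cdot\nabla B-B\cdot\nabla u+\nabla\times(\nabla\cdot(B\otimes B))$ denote the nonlinearities, in the form appearing in \eqref{u dist eq}–\eqref{B dist eq}. The pressure drops out after pairing with divergence-free fields.

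\emph{Low frequencies.} Fix $t$ and set
\[
G(\tau):=\|e^{(t-\tau)\Delta}\check\varphi\star u(\tau)\|_{L^2}^2, \qquad s\le\tau\le t.
\]
Formally,
\[
\partial_\tau\bigl(e^{(t-\tau)\Delta}\check\varphi\star u\bigr)=e^{(t-\tau)\Delta}\check\varphi\star(\partial_\tau u-\Delta u)=-e^{(t-\tau)\Delta}\check\varphi\star N_u .
\]
Hence $G'(\tau)=-2\langle N_u,e^{2(t-\tau)\Delta}\check\varphi\star\check\varphi\star u(\tau)\rangle$, and integrating from $s$ to $t$ gives the first inequality, in fact with equality. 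The rigorous version uses the test function $\Phi(\tau)=\chi_\epsilon(\tau)\,e^{2(t-\tau)\Delta}\check\varphi\star\check\varphi\star u_\delta(\tau)$, where $u_\delta$ is a time mollification, exactly as in Appendix \ref{sec:app.C}, and lets $\delta,\epsilon\to0$ at Lebesgue points $s,t$. Every nonlinear pairing is finite because $u\otimes u$, $B\otimes B$ and $u\times B$ lie in $L^1$ for a.e.\ $\tau$. The test field has all $x$-derivatives in $L^\infty$, uniformly on $[s,t]$, since $\hat\varphi^2$ is Gaussian. In particular the Hall term $\langle\nabla\times\nabla\cdot(B\otimes B),\cdot\rangle$ is harmless: two derivatives fall on the smooth factor. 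The same computation with \eqref{B dist eq} gives the low-frequency inequality for $B$.

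\emph{High frequencies.} Here $\psi^2\hat u$ is only an $L^2$ function, so the Hall pairing and the pairing $\langle B\cdot\nabla B,u\rangle$ make no sense for a general weak solution. I would therefore prove the $E$-weighted inequalities for the regularized solutions $(u_\epsilon,B_\epsilon)$ of Section \ref{sec:3D_weak_sol}, or their 2D analogue, and pass to the limit. This yields the lemma for the Leray–Hopf solutions obtained as such limits, which is the case used in Appendix \ref{sec:app.A}.

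For the smooth approximate solutions, Plancherel gives separately for each field
\[
\tfrac{d}{dt}\|\psi\hat u_\epsilon\|_{L^2}^2=-2\||\xi|\psi\hat u_\epsilon\|_{L^2}^2-2\langle\widehat{N_u},\psi^2\hat u_\epsilon\rangle,
\]
and similarly for $B_\epsilon$. Multiply by $E$ and integrate. The term $E'\|\psi\hat u\|^2$ appears with its own sign, so no sign condition on $E'$ is needed. In the limit, the left side and the dissipation term are handled by weak lower semicontinuity, which holds for a.e.\ $t$ and with the inequality in the right direction. For the nonlinear terms, split $\psi^2=1-(2\varphi-\varphi^2)$:
\begin{itemize}
\item The smooth part $(2\varphi-\varphi^2)$ passes to the limit exactly as in the low-frequency step.
\item In the part with multiplier $1$, the terms $\langle u\cdot\nabla u,u\rangle$, $\langle u\cdot\nabla B,B\rangle$ and the Hall term $\langle\nabla\times\nabla\cdot(B\otimes B),B\rangle$ vanish identically for the approximations, by the divergence-free structure and $(J\times B)\cdot J=0$.
\item What remains is $\langle B\cdot\nabla B,u\rangle$, together with its counterpart $\langle B\cdot\nabla u,B\rangle$, which equals $-\langle B\cdot\nabla B,u\rangle$.
\end{itemize}

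\emph{Main obstacle.} The expected difficulty is the convergence of $\int E\,\langle B_\epsilon\cdot\nabla B_\epsilon,u_\epsilon\rangle\,d\tau$, which no longer cancels when the two fields are estimated separately. I would first get strong convergence $(u_\epsilon,B_\epsilon)\to(u,B)$ in $L^2_{loc}(L^2_{loc})$ from Aubin–Lions, and upgrade it to $L^2_{t}L^{p}_{loc}$ for some $p>2$ by interpolating with the uniform $L^2_t\dot H^1$ bound. Combined with weak convergence of $\nabla B_\epsilon$ in $L^2_{t,x}$, this gives convergence on compact sets. Spatial tails would be controlled by a uniform tail estimate on $\|u_\epsilon\|_{L^2(|x|>R)}+\|B_\epsilon\|_{L^2(|x|>R)}$, obtained from the energy inequality with a cutoff weight. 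If these tail bounds turn out too weak, the fallback is to establish the high-frequency inequalities only on intervals where the solution is strong. That covers all $t\ge T_0$ in 2D by Theorem \ref{thm:2D-eventual-regularity}, and there the identity holds classically.
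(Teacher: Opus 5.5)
\textbf{There is a genuine gap.} Your low-frequency argument is fine; it even gives equality for every weak solution. You are also right that the separate high-frequency inequalities contain pairings that are undefined for a general weak solution. The gap is in how you try to recover them. The step you flag as the main obstacle cannot be closed with the tools you list:
\begin{itemize}
\item \emph{In 3D the cross term does not exist in the limit.} For energy-class fields in $\R^3$ one only has $u,B\in L^{8/3}_tL^4_x$ and $\nabla B\in L^2_tL^2_x$, so $\langle B\cdot\nabla B,u\rangle$ lies in $L^{4/5}_t$ at best. Hence $\int_s^tE\langle B\cdot\nabla B,u\rangle\,d\tau$ is not defined for a general 3D weak solution, and the approximating integrals are not uniformly controlled by the energy bounds.
\item \emph{Your compactness is too weak.} Strong convergence in $L^2_tL^p_{loc}$ with $p>2$, together with weak convergence of $\nabla B_\epsilon$ in $L^2_{t,x}$, does not pass to the limit. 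Pairing against a weakly convergent gradient requires $u_\epsilon B_\epsilon\to uB$ strongly in $L^2_{t,x}$, i.e.\ strong $L^4_{t,x}$ convergence of both fields. That is beyond the energy class in 3D, and exactly its endpoint in 2D, where $L^2$-compactness does not interpolate up to it.
\item \emph{Neither tails nor the fallback help.} Tail estimates do not address this. The fallback only covers $t\ge T_0$ in 2D, whereas Proposition~\ref{prop:non-uniform} is needed for 3D weak solutions in the proof of Theorem~\ref{thm:3D_weak_sol}.
\end{itemize}
Also, Proposition~\ref{prop:non-uniform} is stated for arbitrary Leray--Hopf solutions, not only for limits of approximations.

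\textbf{The missing idea is never to split the two high-frequency inequalities.} Appendix~\ref{sec:app.A} uses only their sum, after the cancellations that replace $\psi^2$ by $\psi^2-1=\varphi^2-2\varphi$ in every nonlinear term (the terms $\tilde K_4$--$\tilde K_8$). In that form the cross terms have cancelled, and every nonlinear pairing is against a Schwartz multiplier. The summed inequality holds for every Leray--Hopf solution with no approximation at all:
\begin{itemize}
\item Write $\|\psi\hat u\|_{L^2}^2=\|u\|_{L^2}^2-L_u$ with $L_u(\tau)=\int(2\varphi-\varphi^2)|\hat u(\tau)|^2\,d\xi$, and likewise for $B$ and for the dissipation terms.
\item Your low-frequency argument, applied to the smooth multiplier $2\varphi-\varphi^2$, gives an exact identity for $E(L_u+L_B)$.
\item Subtract this identity from the $E$-weighted energy inequality, which follows from the strong energy inequality as follows.
\end{itemize}
Set $F(t):=\|u(t)\|_{L^2}^2+\|B(t)\|_{L^2}^2+2\int_0^t(\|\nabla u\|_{L^2}^2+\|\nabla B\|_{L^2}^2)\,d\tau$. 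Then $F(t)\le F(\tau)$ for a.e.\ $\tau$ and all $t\ge\tau$. Hence, for $E\ge0$ and $E'\ge0$ (as for $E=(1+t)^\alpha$),
\[
E(t)F(t)-E(s)F(s)-\int_s^tE'F\,d\tau=E(s)\bigl[F(t)-F(s)\bigr]+\int_s^tE'(\tau)\bigl[F(t)-F(\tau)\bigr]\,d\tau\le0 .
\]
After integrating by parts in $\int_0^\tau(\|\nabla u\|_{L^2}^2+\|\nabla B\|_{L^2}^2)$, this is exactly the weighted energy inequality. The result of the subtraction is precisely the estimate used in Part~2 of Appendix~\ref{sec:app.A}.
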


\subsection{Proof of Proposition \ref{prop:non-uniform}}
Now, we show that for any weak solution $(u,B)$ to \eqref{HMHD} satisfies
\[
\lim_{t \to \infty}\lt(\|u(t)\|_{L^2}^2 + \|B(t)\|_{L^2}^2\rt) = 0.
\]
Using Plancherel theorem, we separately estimate low and high frequency parts as follows:

\[\begin{aligned}
\|u(t)\|_{L^2} &= \|\hat u(t)\|_{L^2}  \le \|\varphi \hat u\|_{L^2} + \|\psi \hat u\|_{L^2}\\
\|B(t)\|_{L^2} &= \|\hat B(t)\|_{L^2} \le \|\varphi \hat B\|_{L^2} + \|\psi \hat B\|_{L^2}
\end{aligned}\]

\noindent $\bullet$ (Part 1: Low frequency part) We first deduce from the previous lemma and Plancherel theorem that

\[\begin{aligned}
\|\varphi \hat u(t)\|_{L^2}^2 &\le \lt\|e^{-|\xi|^2 (t-s)}\varphi \hat u(s)\rt\|_{L^2}^2 + 2  \int_s^t \lt| \langle \widehat {u \cdot \nabla u}(\tau), e^{-2|\xi|^2(t-\tau)}\varphi^2  \hat u(\tau)\rangle \rt| \,d\tau\\
&\quad + 2 \int_s^t \lt|\langle \widehat{B \cdot \nabla B}(\tau), e^{-2|\xi|^2(t-\tau)}  \varphi^2 \hat u(\tau)\rangle \rt| \,d\tau,\\
&=: I_1 + I_2 +I_3.
\end{aligned}\]
First, we can use Lebesgue's dominated convergence theorem to have

\[
\lim_{t \to \infty} I_1 = 0.
\]
For $I_2$, we use Hausdorff-Young's inequality and Sobolev embedding to yield 
\[\begin{aligned}
I_2 &\le C\int_s^t \|\widehat{u \otimes u}\|_{L^d}\|\varphi^2\|_{L^{\frac{2d}{d-2}}}\||\xi| e^{-2|\xi|^2(t-\tau)}\hat u(\tau)\|_{L^2}\,d\tau\\
&\le C\int_s^t \|(u\otimes u)(\tau)\|_{L^{\frac{d}{d-1}}} \||\xi| \hat u(\tau)\|_{L^2} \,d\tau\\
&\le C\int_s^t \|u(\tau)\|_{L^{\frac{2d}{d-1}}}^2\|\nabla u(\tau)\|_{L^2}\,d\tau \le C\int_s^t \|u(\tau)\|_{L^2}\|\nabla u(\tau)\|_{L^2}^2\,d\tau \le C\int_s^t \|\nabla u(\tau)\|_{L^2}^2\,d\tau.
\end{aligned}\]
Similarly, we also have
\[\begin{aligned}
I_3 \le C\int_s^t \lt(\|\nabla B(\tau)\|_{L^2}^2 + \|\nabla u(\tau)\|_{L^2}^2 \rt)\,d\tau.
\end{aligned}\]
Thus,
\[
\lim_{t \to \infty}(I_2 + I_3) \le C\lim_{s \to \infty}\lim_{t \to \infty} \int_s^t  \lt(\|\nabla B(\tau)\|_{L^2}^2 + \|\nabla u(\tau)\|_{L^2}^2 \rt)\,d\tau = 0.
\]

For the magnetic field, we have
\[\begin{aligned}
\|\varphi \hat B(t)\|_{L^2}^2 &\le \lt\| e^{-|\xi|^2 (t-s)} \varphi \hat B(s)\rt\|_{L^2}^2 + 2 \int_s^t \lt|\langle \widehat{u \cdot \nabla B}(\tau), e^{-2|\xi|^2(t-\tau)}\varphi^2 \hat B(\tau)\rangle \rt|\,d\tau\\
&\quad+ 2 \int_s^t \lt|\langle \widehat{B\cdot \nabla u}(\tau), e^{-2|\xi|^2(t-\tau)}\varphi^2 \hat B(\tau)\rangle \rt|,d\tau\\
&\quad + 2\int_s^t \lt|\langle \wh{\nabla \times (\nabla \cdot (B\otimes B))}(\tau), e^{-2|\xi|^2(t-\tau)}\varphi^2 \hat B(\tau)\rangle \rt|\,d\tau\\
&=: J_1 + J_2 + J_3 + J_4.
\end{aligned}\]
By similar argument, one has
\[
\lim_{t\to\infty} J_1 = 0.
\]
For $J_2$,
\[\begin{aligned}
J_2 &\le C \int_s^t \lt|\langle \xi \widehat{u \otimes B}, e^{-2|\xi|^2(t-\tau)}\varphi^2 \hat{B}(\tau) \rt|\,d\tau\\
&\le C\int_s^t \|\widehat{u \otimes B}\|_{L^d} \|\varphi^2\|_{L^{\frac{2d}{d-2}}}\||\xi|e^{-2|\xi|^2(t-\tau)}\hat B(\tau)\|_{L^2}\,d\tau\\
&\le C\int_s^t \|(u \otimes B)(\tau)\|_{L^{\frac{d}{d-1}}} \||\xi|\hat B(\tau)\|_{L^2}\,d\tau\\
&\le C\int_s^t \|u(\tau)\|_{L^{\frac{2d}{d-1}}}\|B(\tau)\|_{L^{\frac{2d}{d-1}}}\|\nabla B\|_{L^2}\,d\tau \le C\int_s^t \lt(\|\nabla u(\tau)\|_{L^2}^2+\|\nabla B(\tau)\|_{L^2}^2\rt)\,d\tau.
\end{aligned}\]
Similarly, we also obtain
\[\begin{aligned}
J_3 \le C\int_s^t \lt(\|\nabla u(\tau)\|_{L^2}^2+\|\nabla B(\tau)\|_{L^2}^2\rt)\,d\tau,
\end{aligned}\]
and hence,
\[
\lim_{t \to \infty}(J_2 + J_3) \le \lim_{s \to \infty}\lim_{t \to \infty}C\int_s^t \lt(\|\nabla u(\tau)\|_{L^2}^2+\|\nabla B(\tau)\|_{L^2}^2\rt)\,d\tau =0.
\]
For $J_4$, one gets
\[\begin{aligned}
J_4 &\le C\int_s^t \lt|\langle \xi \times (\xi \cdot (\widehat{B\otimes B}(\tau))), e^{-2|\xi|^2(t-\tau)}\varphi^2 \hat B(\tau)  \rangle \rt|\,d\tau\\
&\le C \int_s^t \|\widehat{B\otimes B}(\tau)\|_{L^d}\||\xi|\varphi^2\|_{L^{\frac{2d}{d-2}}}\||\xi|e^{-2|\xi|^2(t-\tau)}\hat B(\tau)\|_{L^2}\,d\tau\\
&\le C\int_s^t \|B(\tau)\|_{L^2}\|\nabla B(\tau)\|_{L^2}^2\,d\tau \le C\int_s^t \|\nabla B(\tau)\|_{L^2}^2\,d\tau,
\end{aligned}\]
and again we have
\[
\lim_{t\to\infty} J_4 =0.
\]
Thus, we have the convergence of the low frequency part;
\[
\lim_{t\to \infty}\lt(\|\varphi \hat u(t)\|_{L^2}^2 + \|\varphi \hat B(t)\|_{L^2}^2\rt) =0.
\]

\noindent $\bullet$ (Part 2: High frequency part) Here, we follow the argument in \cite{AS07}. We choose $E(t) = (1+t)^\alpha$ and let $G^2(t) := \frac{\alpha}{2(1+t)}$ and $\chi(t) := \{ \xi \in \R^3 \ | \ |\xi|\le G(t)\}$ with $\alpha>3$. Then the previous lemma gives

\begin{align*}
E(t)& \lt[\|\psi\hat u(t)\|_{L^2}^2 + \|\psi \hat B(t)\|_{L^2}^2 \rt]\\
&\le E(s) \lt[\|\psi\hat u(s)\|_{L^2}^2 + \|\psi \hat B(s)\|_{L^2}^2 \rt]\\
&\quad + \int_s^t E'(\tau) \int_{\chi(\tau)} |\psi \hat u(\tau)|^2\,d\xi d\tau + \int_s^t E'(\tau) \int_{\chi(\tau)} |\psi \hat B(\tau)|^2\,d\xi d\tau\\
&\quad + \int_s^t E'(\tau) \int_{\R^3\setminus \chi(\tau)} |\psi \hat u(\tau)|^2\,d\xi d\tau + \int_s^t E'(\tau) \int_{\R^3\setminus \chi(\tau)} |\psi \hat B(\tau)|^2\,d\xi d\tau\\
&\quad - 2\int_s^t E(\tau)\| |\xi| \psi \hat u(\tau)\|_{L^2}^2\,d\tau - 2\int_s^t E(\tau) \| |\xi|\psi \hat B(\tau)\|_{L^2}^2 \,d\tau\\
&\quad -2\int_s^t E(\tau) \langle \widehat {u \cdot \nabla u} (\tau), \psi^2 \hat u(\tau)\rangle \,d\tau + 2\int_s^t E(\tau) \langle \widehat{B \cdot \nabla B}(\tau), \psi^2 \hat u(\tau)\rangle \,d\tau\\
&\quad -2\int_s^t E(\tau) \langle \widehat{u \cdot \nabla B}(\tau), \psi^2 \hat B(\tau)\rangle\,d\tau+ 2\int_s^t E(\tau) \langle \widehat{B \cdot \nabla u}(\tau), \psi^2 \hat B(\tau)\rangle\,d\tau\\
&\quad - 2\int_s^t E(\tau) \langle \wh{\nabla \times (\nabla \cdot (B\otimes B))}(\tau), \psi^2 \hat B(\tau)\rangle \,d\tau.
\end{align*}
Here, we may use $E'(t)-2E(t)G^2(t)=0$ to yield

\[
\begin{aligned}
\int_s^t& E'(\tau) \int_{\R^3\setminus \chi(\tau)} |\psi \hat u(\tau)|^2\,d\xi d\tau  - 2\int_s^t E(\tau)\| |\xi| \psi \hat u(\tau)\|_{L^2}^2\,d\tau\\
&+ \int_s^t E'(\tau) \int_{\R^3\setminus \chi(\tau)} |\psi \hat B(\tau)|^2\,d\xi d\tau - 2\int_s^t E(\tau) \| |\xi|\psi \hat B(\tau)\|_{L^2}^2 \,d\tau \le 0.
\end{aligned}
\]
Thus, we arrive at

\begin{align*}
E(t)& \lt[\|\psi\hat u(t)\|_{L^2}^2 + \|\psi \hat B(t)\|_{L^2}^2 \rt]\\
&\le E(s) \lt[\|\psi\hat u(s)\|_{L^2}^2 + \|\psi \hat B(s)\|_{L^2}^2 \rt]\\
&\quad + \int_s^t E'(\tau) \int_{\chi(\tau)} |\psi \hat u(\tau)|^2\,d\xi d\tau + \int_s^t E'(\tau) \int_{\chi(\tau)} |\psi \hat B(\tau)|^2\,d\xi d\tau\\
&\quad -2\int_s^t E(\tau) \langle \widehat {u \cdot \nabla u} (\tau), \psi^2 \hat u(\tau)\rangle \,d\tau + 2\int_s^t E(\tau) \langle \widehat{B \cdot \nabla B}(\tau), \psi^2 \hat u(\tau)\rangle \,d\tau\\
&\quad -2\int_s^t E(\tau) \langle \widehat{u \cdot \nabla B}(\tau), \psi^2 \hat B(\tau)\rangle\,d\tau+ 2\int_s^t E(\tau) \langle \widehat{B \cdot \nabla u}(\tau), \psi^2 \hat B(\tau)\rangle\,d\tau\\
&\quad - 2\int_s^t E(\tau) \langle \wh{\nabla \times (\nabla \cdot (B\otimes B))}(\tau), \psi^2 \hat B(\tau)\rangle \,d\tau\\
&=: \sum_{i=1}^8 K_i.
\end{align*}
For $K_2$ and $K_3$, we use $|\psi| = |1-\varphi| \le |\xi|^2 $ on $|\xi|\le 1$ to get

\[\begin{aligned}
K_2 + K_3 &\le \int_s^t E'(\tau) G^4(\tau) (\|u(\tau)\|_{L^2}^2 + \|B(\tau)\|_{L^2}^2)\,d\tau\\
&\le C(\|u_0\|_{L^2}^2 + \|B_0\|_{L^2}^2)\int_s^t (1+\tau)^{\alpha-3}\,d\tau \le  C\lt((1+t)^{\alpha-2} - (1+s)^{\alpha-2}\rt).
\end{aligned}\]
For $K_4$-$K_8$, we use $\langle u \cdot \nabla u, u\rangle = \langle u \cdot \nabla B, B\rangle=\langle \nabla\times (\nabla \cdot (B\otimes B)), B\rangle=0$ and $\langle B\cdot \nabla B, u\rangle + \langle B \cdot \nabla u, B\rangle =0$ to obtain

\[
\begin{aligned}
\sum_{i=4}^8 K_i &= -2\int_s^t E(\tau) \langle \widehat {u \cdot \nabla u} (\tau), (\psi^2-1) \hat u(\tau)\rangle \,d\tau + 2\int_s^t E(\tau) \langle \widehat{B \cdot \nabla B}(\tau), (\psi^2-1) \hat u(\tau)\rangle \,d\tau\\
&\quad -2\int_s^t E(\tau) \langle \widehat{u \cdot \nabla B}(\tau), (\psi^2-1) \hat B(\tau)\rangle\,d\tau+ 2\int_s^t E(\tau) \langle \widehat{B \cdot \nabla u}(\tau), (\psi^2-1) \hat B(\tau)\rangle\,d\tau\\
&\quad - 2\int_s^t E(\tau) \langle \wh{\nabla \times (\nabla \cdot (B\otimes B))}(\tau), (\psi^2-1) \hat B(\tau)\rangle \,d\tau\\
&=: \sum_{i=4}^8 \tilde{K}_i
\end{aligned}
\]
For $\tilde{K}_4$, with the fact that $\psi^2 -1 = e^{-2|\xi|^2} - 2e^{-|\xi|^2}$ belongs to the Schwartz class $\mathcal{S}(\R^3)$ in mind, one has

\[\begin{aligned}
\tilde{K}_4 &\le C\int_s^t E(\tau) \|\wh{u\otimes u}(\tau)\|_{L^d} \|\psi^2 -1\|_{L^{\frac{2d}{d-2}}}\||\xi|\hat u(\tau)\|_{L^2}\,d\tau\\
&\le C\int_s^t E(\tau) \|u(\tau)\|_{L^2}\|\nabla u(\tau)\|_{L^2}^2\,d\tau \le C\int_s^t E(\tau) \|\nabla u(\tau)\|_{L^2}^2\,d\tau.
\end{aligned}\]
Similar arguments imply

\[\begin{aligned}
\tilde{K}_5 + \tilde{K}_6 + \tilde{K}_7 \le C\int_s^t E(\tau) \lt(\|\nabla u(\tau)\|_{L^2}^2 + \|\nabla B(\tau)\|_{L^2}^2\rt) \,d\tau.
\end{aligned}\]
For $\tilde{K}_8$, 

\[\begin{aligned}
\tilde{K}_8 &\le C\int_s^t E(\tau) \|\wh{B\otimes B}(\tau)\|_{L^d}\| |\xi||\psi^2 - 1|\|_{L^{\frac{2d}{d-2}}}\||\xi|\hat B(\tau)\|_{L^2}\,d\tau\\
&\le C\int_s^t E(\tau) \|\nabla B(\tau)\|_{L^2}^2\,d\tau.
\end{aligned}\]
Hence, we gather all the estimates to yield

\[\begin{aligned}
 \lt[\|\psi\hat u(t)\|_{L^2}^2 + \|\psi \hat B(t)\|_{L^2}^2 \rt] &\le \frac{E(s)}{E(t)} \lt[\|\psi\hat u(s)\|_{L^2}^2 + \|\psi \hat B(s)\|_{L^2}^2 \rt]+\frac{C}{E(t)}\lt((1+t)^{\alpha-2} - (1+s)^{\alpha-2}\rt)\\
&\quad + C\int_s^t \frac{E(\tau)}{E(t)} \lt(\|\nabla u(\tau)\|_{L^2}^2 + \|\nabla B(\tau)\|_{L^2}^2 \rt)\,d\tau\\
&\le \lt(\frac{1+s}{1+t} \rt)^\alpha \lt[\|\psi\hat u(s)\|_{L^2}^2 + \|\psi \hat B(s)\|_{L^2}^2 \rt]+C(1+t)^{-2}\\
&\quad + C\int_s^t \lt(\|\nabla u(\tau)\|_{L^2}^2 + \|\nabla B(\tau)\|_{L^2}^2 \rt)\,d\tau
\end{aligned}\]
and this implies

\[\begin{aligned}
\lim_{t \to \infty}\lt[\|\psi\hat u(t)\|_{L^2}^2 + \|\psi \hat B(t)\|_{L^2}^2 \rt] &\le \lim_{t \to \infty}\lt(\frac{1+s}{1+t} \rt)^\alpha \lt[\|\psi\hat u(s)\|_{L^2}^2 + \|\psi \hat B(s)\|_{L^2}^2 \rt]\\
&\quad + \lim_{s\to\infty}\lim_{t\to\infty} C\int_s^t \lt(\|\nabla u(\tau)\|_{L^2}^2 + \|\nabla B(\tau)\|_{L^2}^2 \rt)\,d\tau\\
&\le 0
\end{aligned}\]
and therefore, we have attained

\[
\lim_{t \to \infty}\lt(\|u(t)\|_{L^2}^2 + \|B(t)\|_{L^2}^2 \rt)=0.
\]

\section{Proof of Proposition \ref{prop:non-uniform_B+omega}}\label{sec:app.B}

The proof is similar to that of Proposition \ref{prop:non-uniform}. First, we set $\Gamma=B+\omega$ and $\nu=1$, $\eta=2$ for simplicity. Then, in three dimension
\[
\pa_t\Gamma+u\cdot\nabla\Gamma-\Gamma\cdot\nabla u-\Delta\Gamma=\Delta B.
\]
Similar to Lemma \ref{lem:non-uniform}, we have the following lemma.
\begin{lemma}\label{lem:B+omega}
For a function $E(t) \in \mc^1(\R; \R_+)$ with $E(t) \ge0 $ and $\varphi(\xi) = e^{-|\xi|^2}$, $\psi(\xi) := 1- \varphi(\xi)$. Then a weak solution $(u,B)$ to \eqref{HMHD} with \eqref{3D_B+omega_bound} satisfies
\[\begin{aligned}
\|\check \varphi\star \Gamma(t)\|_{L^2}^2 &\le \|e^{(t-s)\Delta}\check\varphi\star \Gamma(s)\|_{L^2}^2 - 2\int_s^t \langle u \cdot \nabla \Gamma(\tau), e^{2(t-\tau)\Delta}\check\varphi\star\check\varphi \star \Gamma(\tau)\rangle \,d\tau\\
&\quad + 2\int_s^t \langle \Gamma\cdot \nabla u(\tau), e^{2(t-\tau)\Delta}\check\varphi\star\check\varphi\star \Gamma(\tau)\rangle \,d\tau +2\int_s^t \langle \Delta B (\tau), e^{2(t-\tau)\Delta}\check\varphi\star\check\varphi\star \Gamma(\tau)\rangle\,d\tau,\\
E(t)\|\psi \hat \Gamma(t)\|_{L^2}^2 &\le E(s)\|\psi \hat \Gamma(s)\|_{L^2}^2 + \int_s^t E'(\tau) \|\psi \hat \Gamma(\tau)\|_{L^2}^2 \,d\tau - 2\int_s^t E(\tau) \| |\xi|\psi \hat \Gamma(\tau)\|_{L^2}^2\,d\tau\\
&\quad - 2\int_s^t E(\tau) \langle \widehat{u \cdot \nabla \Gamma}(\tau), \psi^2 \hat \Gamma(\tau)\rangle\,d\tau + 2\int_s^t E(\tau) \langle \widehat{\Gamma \cdot \nabla u}(\tau), \psi^2 \hat \Gamma(\tau)\rangle\,d\tau \\
&\quad + 2\int_s^t E(\tau) \langle \wh{\Delta B}(\tau), \psi^2 \hat \Gamma(\tau)\rangle \,d\tau.
\end{aligned}\]
\end{lemma}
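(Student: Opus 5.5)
Lemma \ref{lem:B+omega} will be derived exactly as Lemma \ref{lem:non-uniform}: first for the regularized solutions used in Proposition \ref{prop:existence_B+omega}, where every manipulation is legitimate, and then passed to the limit $\epsilon\to0$. The key input is that the uniform bound \eqref{3D_B+omega_bound} holds uniformly in $\epsilon$ for $\Gamma_\epsilon=B_\epsilon+\omega_\epsilon$. For the smooth approximations, $\Gamma_\epsilon$ solves
\[
\pa_t\Gamma_\epsilon+\nabla\times\bigl(\mathcal{J}_\epsilon(\mathcal{J}_\epsilon\Gamma_\epsilon\times\mathcal{J}_\epsilon u_\epsilon)\bigr)=\Delta\mathcal{J}_\epsilon^2\Gamma_\epsilon+\Delta\mathcal{J}_\epsilon^2B_\epsilon,
\]
with our normalization $\nu=1$, $\eta=2$. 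Since $\nabla\times(\Gamma\times u)=u\cdot\nabla\Gamma-\Gamma\cdot\nabla u$ for divergence-free fields, this is the regularized form of the $\Gamma$-equation.

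For the low-frequency inequality, fix $t$ and test the equation with $e^{2(t-\tau)\Delta}\check\varphi\star\check\varphi\star\Gamma_\epsilon(\tau)$. The function $\tau\mapsto\|e^{(t-\tau)\Delta}\check\varphi\star\Gamma_\epsilon(\tau)\|_{L^2}^2$ has derivative equal to twice the nonlinear and forcing pairings: the dissipative term $\Delta\Gamma_\epsilon$ cancels exactly against the $\tau$-derivative of the heat semigroup. The cancellation is exact only up to the mollifier factor $\mathcal{J}_\epsilon^2$, and the leftover has a good sign, since $|\widehat{\mathcal J_\epsilon}|\le 1$. Integrating from $s$ to $t$ gives the first inequality for $\Gamma_\epsilon$.

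For the high-frequency inequality, multiply the Fourier-transformed equation by $E(\tau)\psi^2\overline{\hat\Gamma_\epsilon}$ and take real parts. This gives
\[
\frac{d}{d\tau}\bigl(E\|\psi\hat\Gamma_\epsilon\|_{L^2}^2\bigr)=E'\|\psi\hat\Gamma_\epsilon\|_{L^2}^2-2E\||\xi|\widehat{\mathcal J_\epsilon}\psi\hat\Gamma_\epsilon\|_{L^2}^2+(\text{nonlinear and }\Delta B\text{ terms}),
\]
which after integration is the second inequality at level $\epsilon$. The factor $\widehat{\mathcal J_\epsilon}$ in the dissipation disappears in the limit by lower semicontinuity.

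The main obstacle is the limit $\epsilon\to0$, which needs three separate arguments.
\begin{enumerate}
\item Linear terms on the left-hand side. From \eqref{3D_B+omega_bound} uniformly in $\epsilon$, we have $\Gamma_\epsilon$ bounded in $L^\infty_tL^2\cap L^2_t\dot H^1$. Together with Aubin--Lions compactness for $(u_\epsilon,B_\epsilon)$ (as in \cite{Chae Degond Liu}), this gives $\Gamma_\epsilon\rightharpoonup\Gamma$ weakly and $(u_\epsilon,B_\epsilon)\to(u,B)$ strongly in $L^2_{loc}L^2$. The left-hand sides then pass by weak lower semicontinuity at almost every $t$.
\item Initial time. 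The $s$-terms converge if $s$ is restricted to a full-measure set where strong $L^2_{loc}$ convergence holds pointwise in time, after extracting a subsequence.
\item Nonlinear terms. Write $u\cdot\nabla\Gamma$ in the divergence form $\nabla\cdot(u\otimes\Gamma)$ and move the derivative onto the smooth multiplier. The products $u_\epsilon\otimes\Gamma_\epsilon$ and $\Gamma_\epsilon\otimes u_\epsilon$ then converge in $L^1_{loc}$, using strong convergence of $u_\epsilon$ against weak convergence of $\Gamma_\epsilon$. The remaining pairing is bounded by $\|u\|_{L^3}\|\Gamma\|_{L^2}\|\nabla\Gamma\|_{L^2}$, which is locally integrable in time.
\end{enumerate}
Note that the multipliers are not compactly supported in $\xi$: $\psi^2$ is bounded but not Schwartz, and the $\psi^2$ terms need a uniform integrability argument using $E$ bounded on $[s,t]$. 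For these I would therefore use the identity $\psi^2=1+(\psi^2-1)$, as in Appendix \ref{sec:app.A}. The pairings against the full $\hat\Gamma_\epsilon$ are handled by the energy-type identity for $\Gamma_\epsilon$. The pairings against $(\psi^2-1)\hat\Gamma_\epsilon$ involve a Schwartz multiplier and converge as in item 3.

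The forcing term $\langle\Delta B,\cdot\rangle$ is linear in $B$, so it passes to the limit by weak convergence of $\nabla B_\epsilon$ in $L^2_tL^2$. The test factor carries $\Gamma$, and $\nabla\Gamma_\epsilon$ is only weakly convergent, so this pairing is a weak$\times$weak product. I would handle it by integrating by parts to $\langle\nabla B,\nabla(\cdot)\rangle$ and accepting an inequality in the limit, via the $\liminf$ of the full energy identity. This mirrors how the energy inequality itself is obtained in the Leray–Hopf construction.
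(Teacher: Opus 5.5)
The paper gives no argument beyond ``similar to Lemma~\ref{lem:non-uniform}'', so there is no detailed proof to match. Regularizing and then passing to the limit is a natural reading, but the limit passage, which you rightly call the main obstacle, does not work as you describe it.

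\textbf{A sign slip.} At level $\epsilon$ the regularized flow dissipates \emph{less} than $e^{(t-\tau)\Delta}$. So $\frac{d}{d\tau}\|e^{(t-\tau)\Delta}\check\varphi\star\Gamma_\epsilon(\tau)\|_{L^2}^2$ contains the term $+2\int|\xi|^2(1-\hat\rho(\epsilon\xi)^2)e^{-2(t-\tau)|\xi|^2}\varphi^2|\hat\Gamma_\epsilon|^2\,d\xi\ge0$. This is the bad sign, not a good one. It can be dropped only because the Gaussian weight makes it $o(1)$ as $\epsilon\to0$.

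\textbf{The terms on the upper-bound side.} The quantities $\|e^{(t-s)\Delta}\check\varphi\star\Gamma_\epsilon(s)\|_{L^2}^2$, $E(s)\|\psi\hat\Gamma_\epsilon(s)\|_{L^2}^2$ and $\int_s^tE'\|\psi\hat\Gamma_\epsilon\|_{L^2}^2\,d\tau$ all sit on the upper-bound side; you never discuss the last one. You need their $\limsup$ to be at most the limiting values. That requires strong convergence of $\Gamma_\epsilon$ in $L^2(\R^3)$ at time $s$ and in $L^2((s,t)\times\R^3)$.
\begin{enumerate}
\item Strong $L^2_{loc}$ convergence does not give this. Mass escaping to spatial infinity survives in these quantities but is lost in the limit. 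So does mass at frequencies $\gtrsim\epsilon^{-1}$ in the $\psi$-terms, since the scheme bounds only $\nabla\mathcal{J}_\epsilon\Gamma_\epsilon$, not $\nabla\Gamma_\epsilon$.
\item Tightness is not free here either. The usual localized energy estimate for the 3D Hall term only yields $R^{-1}\|B\|_{L^2}^{1/2}\|\nabla B\|_{L^2}^{5/2}$, which is not time-integrable under energy control.
\end{enumerate}

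\textbf{The forcing term.} The pairing $\langle\widehat{\Delta B},\psi^2\hat\Gamma\rangle$ is a weak--weak product. The quadratic form $\||\xi|\psi\hat\Gamma\|_{L^2}^2+\mathrm{Re}\langle|\xi|\hat B,|\xi|\psi^2\hat\Gamma\rangle$ is indefinite, so no lower-semicontinuity argument applies. Coupling with the $(u,B)$ energy inequality would at best give a combined inequality, not the one stated.

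\textbf{The missing idea.} No approximation is needed, because \eqref{3D_B+omega_bound} makes the solution itself regular enough.
\begin{enumerate}
\item Since $\omega=\Gamma-B\in L^\infty_tL^2$, you get $u\in L^\infty_tH^1$ and $\Gamma\in L^2_{loc}H^1$.
\item Test the $u$-equation with $\nabla\times\Psi$ and add the $B$-equation. The $B\otimes B$ terms cancel, giving $\pa_t\Gamma+\nabla\times(\Gamma\times u)=\nu\Delta\omega+\eta\Delta B$ in $\mathcal{D}'$.
\item Since $\|\Gamma\times u\|_{L^2}\le\|\Gamma\|_{L^3}\|u\|_{L^6}\lesssim\|\nabla\Gamma\|_{L^2}^{1/2}$, this gives $\pa_t\Gamma\in L^2_{loc}H^{-1}$.
\item The Lions--Magenes lemma then yields $\Gamma\in C([0,\infty);L^2)$. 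Apply it to $m(\tau,D)\Gamma$ with $m=e^{-(t-\tau)|\xi|^2}\varphi$, and again with $m=E(\tau)^{1/2}\psi$.
\end{enumerate}
This gives both inequalities of the lemma as \emph{identities} for all $0\le s\le t$. In the first case the dissipation cancels exactly against $\pa_\tau m$.
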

Using Plancherel theorem, we separately estimate low and high frequency parts as follows
\[
\|\Gamma(t)\|_{L^2}\le \|\varphi\hat \Gamma\|_{L^2}+\|\psi\hat\Gamma\|_{L^2}.
\]

We first estimate the low frequency part:
\[\begin{aligned}
\|\varphi \hat \Gamma(t)\|_{L^2}^2 &\le \lt\| e^{-|\xi|^2 (t-s)} \varphi \hat \Gamma(s)\rt\|_{L^2}^2 + 2 \int_s^t \lt|\langle \widehat{u \cdot \nabla \Gamma}(\tau), e^{-2|\xi|^2(t-\tau)}\varphi^2 \hat \Gamma(\tau)\rangle \rt|\,d\tau\\
&\quad+ 2 \int_s^t \lt|\langle \widehat{\Gamma\cdot \nabla u}(\tau), e^{-2|\xi|^2(t-\tau)}\varphi^2 \hat \Gamma(\tau)\rangle \rt|,d\tau  + 2\int_s^t \lt|\langle |\xi|^2\hat B(\tau), e^{-2|\xi|^2(t-\tau)}\varphi^2 \hat \Gamma(\tau)\rangle \rt|\,d\tau\\
&=: \mathcal{J}_1 + \mathcal{J}_2 + \mathcal{J}_3 + \mathcal{J}_4.
\end{aligned}\]
By the similar argument in the proof of Proposition \ref{prop:non-uniform}, we have
\[
\lim_{t\to\infty} \mathcal{J}_1 = 0,\quad
\lim_{t \to \infty}(\mathcal{J}_2 + \mathcal{J}_3) \le \lim_{s \to \infty}\lim_{t \to \infty}\int_s^t \|\nabla \Gamma(\tau)\|_{L^2}^2\,d\tau =0.
\]
For $\mathcal{J}_4$, one gets
\[\begin{aligned}
\mathcal{J}_4 &\le C\int_s^t \lt|\langle |\xi|^2 \hat B(\tau), e^{-2|\xi|^2(t-\tau)}\varphi^2 \hat \Gamma(\tau)  \rangle \rt|\,d\tau\\
&\le C \int_s^t \||\xi|\hat B(\tau)\|_{L^2}\|\varphi^2\|_{L^\infty}\||\xi|e^{-2|\xi|^2(t-\tau)}\hat \Gamma(\tau)\|_{L^2}\,d\tau\\
&\le C\int_s^t \|\nabla B(\tau)\|_{L^2}\|\nabla \Gamma(\tau)\|_{L^2}\,d\tau \le C\int_s^t \lt(\|\nabla B(\tau)\|_{L^2}^2+\|\nabla\Gamma(\tau)\|_{L^2}^2\rt)\,d\tau,
\end{aligned}\]
and again we have
\[
\lim_{t\to\infty} \mathcal{J}_4 =0.
\]
Thus, we have the convergence of the low frequency part;
\[
\lim_{t\to \infty} \|\varphi \hat \Gamma(t)\|_{L^2}^2 =0.
\]

Now we compute the hight frequency part. We choose $E(t) = (1+t)^\alpha$ and let $G^2(t) := \frac{\alpha}{2(1+t)}$ and $\chi(t) := \{ \xi \in \R^3 \ | \ |\xi|\le G(t)\}$ with $\alpha>3$. Then the previous lemma gives
\begin{align*}
&E(t) \|\psi \hat \Gamma(t)\|_{L^2}^2 \\
&\le E(s) \|\psi \hat \Gamma(s)\|_{L^2}^2 + \int_s^t E'(\tau) \int_{\chi(\tau)} |\psi \hat \Gamma(\tau)|^2\,d\xi d\tau + \int_s^t E'(\tau) \int_{\R^3\setminus \chi(\tau)} |\psi \hat \Gamma(\tau)|^2\,d\xi d\tau - 2\int_s^t E(\tau) \| |\xi|\psi \hat \Gamma(\tau)\|_{L^2}^2 \,d\tau\\
&\quad -2\int_s^t E(\tau) \langle \widehat{u \cdot \nabla \Gamma}(\tau), \psi^2 \hat \Gamma(\tau)\rangle\,d\tau+ 2\int_s^t E(\tau) \langle \widehat{\Gamma \cdot \nabla u}(\tau), \psi^2 \hat \Gamma(\tau)\rangle\,d\tau + 2\int_s^t E(\tau) \langle \wh{\Delta B}(\tau), \psi^2 \hat \Gamma(\tau)\rangle \,d\tau.
\end{align*}
Here, we may use $E'(t)-2E(t)G^2(t)=0$ to yield
\[
\begin{aligned}
\int_s^t E'(\tau) \int_{\R^3\setminus \chi(\tau)} |\psi \hat \Gamma(\tau)|^2\,d\xi d\tau - 2\int_s^t E(\tau) \| |\xi|\psi \hat \Gamma(\tau)\|_{L^2}^2 \,d\tau \le 0.
\end{aligned}
\]
Thus, we arrive at
\begin{align*}
E(t) \|\psi \hat \Gamma(t)\|_{L^2}^2 &\le E(s) \|\psi \hat \Gamma(s)\|_{L^2}^2 + \int_s^t E'(\tau) \int_{\chi(\tau)} |\psi \hat \Gamma(\tau)|^2\,d\xi d\tau \\
&\quad -2\int_s^t E(\tau) \langle \widehat{u \cdot \nabla \Gamma}(\tau), \psi^2 \hat \Gamma(\tau)\rangle\,d\tau+ 2\int_s^t E(\tau) \langle \widehat{\Gamma \cdot \nabla u}(\tau), \psi^2 \hat \Gamma(\tau)\rangle\,d\tau \\
&\quad + 2\int_s^t E(\tau) \langle \wh{\Delta B}(\tau), \psi^2 \hat \Gamma(\tau)\rangle \,d\tau \\
&=:\sum^5_{i=1}\mathcal{K}_i.
\end{align*}
For $\mathcal{K}_2$, we use $|\psi| = |1-\varphi| \le |\xi|^2 $ on $|\xi|\le 1$ to get
\[\begin{aligned}
\mathcal{K}_2 &\le \int_s^t E'(\tau) G^4(\tau) \|\Gamma(\tau)\|_{L^2}^2\,d\tau \le C\|\Gamma_0\|_{L^2}^2 \int_s^t (1+\tau)^{\alpha-3}\,d\tau \le  C\lt((1+t)^{\alpha-2} - (1+s)^{\alpha-2}\rt).
\end{aligned}\]
For $\mathcal{K}_3$ and $\mathcal{K}_4$, we obtain
\begin{align*}
\mathcal{K}_3+\mathcal{K}_4 &= -2\int^t_s E(\tau)\langle \widehat{u\otimes \Gamma}(\tau)-\widehat{\Gamma\otimes u}(\tau),\psi^2 \widehat{\nabla \Gamma}(\tau)\rangle\,d\tau \\
&\le C\int^t_s E(\tau)\|u(\tau)\|_{L^3}\|\Gamma(\tau)\|_{L^6}\|\psi^2\|_{L^\infty}\|\nabla\Gamma(\tau)\|_{L^2}\,d\tau \le C\int^t_s E(\tau)\|\nabla\Gamma(\tau)\|_{L^2}^2\,d\tau,
\end{align*}
since
\[
\|u\|_{L^3}\le C\lt(\|u\|_{L^2}+\|\omega\|_{L^2}\rt)\le C\lt(\|u\|_{L^2}+\|B\|_{L^2}+\|B+\omega\|_{L^2}\rt)\le C.
\]
For $\mathcal{K}_5$, we get
\begin{align*}
\mathcal{K}_5 \le C\int^t_s E(\tau)\|\psi^2\|_{L^\infty}\|\nabla B(\tau)\|_{L^2}\|\nabla \Gamma(\tau)\|_{L^2}\,d\tau \le C\int^t_s E(\tau)\lt(\|\nabla B(\tau)\|_{L^2}^2++\|\nabla \Gamma(\tau)\|_{L^2}^2\rt)\,d\tau 
\end{align*}

Hence, we gather all the estimates to yield
\[\begin{aligned}
\|\psi \hat \Gamma(t)\|_{L^2}^2 &\le \frac{E(s)}{E(t)} \|\psi \hat \Gamma(s)\|_{L^2}^2 +\frac{C}{E(t)}\lt((1+t)^{\alpha-2} - (1+s)^{\alpha-2}\rt)\\
&\quad + C\int_s^t \frac{E(\tau)}{E(t)} \lt(\|\nabla B(\tau)\|_{L^2}^2 + \|\nabla \Gamma(\tau)\|_{L^2}^2 \rt)\,d\tau\\
&\le \lt(\frac{1+s}{1+t} \rt)^\alpha \|\psi \hat \Gamma(s)\|_{L^2}^2 +C(1+t)^{-2} + C\int_s^t \lt(\|\nabla B(\tau)\|_{L^2}^2 + \|\nabla \Gamma(\tau)\|_{L^2}^2 \rt)\,d\tau
\end{aligned}\]
and this implies
\[\begin{aligned}
\lim_{t \to \infty} \|\psi \hat \Gamma(t)\|_{L^2}^2 &\le \lim_{t \to \infty}\lt(\frac{1+s}{1+t} \rt)^\alpha  \|\psi \hat \Gamma(s)\|_{L^2}^2  + \lim_{s\to\infty}\lim_{t\to\infty} C\int_s^t \lt(\|\nabla B(\tau)\|_{L^2}^2 + \|\nabla \Gamma(\tau)\|_{L^2}^2 \rt)\,d\tau \le 0
\end{aligned}\]
and therefore, we have attained

\[
\lim_{t \to \infty}\|\Gamma(t)\|_{L^2}^2 =0.
\]

\end{document}